\theoremstyle{plain}
\newtheorem{Thm}{Theorem}[section]
\newtheorem{Lem}[Thm]{Lemma}
\newtheorem{Prop}[Thm]{Proposition}
\newtheorem{Cor}[Thm]{Corollary}
\theoremstyle{definition}
\tikzstyle{vertex}=[circle, draw, inner sep=0pt, minimum size=6pt] 
\newcommand{\vertex}{\node[vertex]}
\newcommand{\NNN}{\mathcal{N}}
\title{The niche graphs of multipartite tournaments}
\author[1]{Soogang Eoh
}
\author[1]{Myungho Choi
}
\author[1]{Suh-Ryung Kim
}
\affil[1]{Department of Mathematics Education,
Seoul National University, Seoul 08826, Republic of Korea}
\begin{document}
\maketitle
\begin{abstract}
The  niche graph of a digraph $D$, denoted by $\NNN(D)$, has  $V(D)$ as the vertex set and  an edge $uv$ if and only if $(u,w) \in A(D)$ and $(v,w) \in A(D)$, or $(w,u) \in A(D)$ and $(w,v) \in A(D)$ for some $w \in V(D)$. The notion of niche graph was introduced by Cable~{\it et al.}~\cite{Cable} as a variant of competition graph. If a graph is the niche graph of a digraph $D$, it is said to be niche-realizable through $D$. If a graph $G$ is niche-realizable through a $k$-partite tournament for an integer $k \ge 2$, then we say that the pair $(G, k)$ is niche-realizable. Bowser~{\it et al.}~\cite{bowser1999niche} studied the graphs that are niche-realizable through a tournament and Eoh~{\it et al.}~\cite{eoh2018niche} studied niche-realizable pairs $(G, k)$ for $k=2$.
In this paper, we study niche-realizable pairs $(G, k)$ when $G$ is a graph and $k$ is an integer at least $3$ to extend their work. We show that the niche graph of a $k$-partite tournament has at most three components if $k \ge 3$ and is connected if $k \ge 4$. Then we find all the  niche-realizable pairs $(G, k)$ when $G$ is a disconnected graph, when $G$ is a complete graph, and when $G$ is a connected triangle-free graph.
\end{abstract}
\noindent
{\it Keywords.} niche graph; multipartite tournament; competition graph; niche-realizable pair; triangle-free graph

\noindent
{{{\it 2010 Mathematics Subject Classification.} 05C20, 05C38}}

\section{Introduction}
In this paper, a graph means a simple graph. For all undefined graph theory terminology, see~\cite{bondy}.

Cohen~\cite{cohen} introduced the notion of competition graph while studying predator-prey concepts in ecological food webs.
The \emph{competition graph} of a digraph $D$
is the graph having the vertex set $V(D)$ and an edge $uv$ if and only if $(u,w) \in A(D)$ and $(v,w) \in A(D)$ for some $w \in V(D)$.
Cohen's empirical observation that real-world competition graphs are usually interval graphs had led to a great deal of research on the structure of competition graphs and on the relation between the structure of digraphs and their corresponding competition graphs. In the same vein, various variants of competition graph have been introduced and studied, one of which is the notion of niche graph  introduced by Cable~{\it et al.}~\cite{Cable} (see \cite{ bowser1991some, bowser1999niche, m-step, tournament, fishburn1992niche, p-competition, RobertsSheng, Scott, seager1998cyclic} for various variants of competition graph).

The  {\em niche graph} of a digraph $D$, denoted by $\NNN(D)$, has $V(D)$ as the vertex set and an edge $uv$ if and only if $(u,w) \in A(D)$ and $(v,w) \in A(D)$, or $(w,u) \in A(D)$ and $(w,v) \in A(D)$ for some $w \in V(D)$.
If a graph is the niche graph of a digraph $D$, then it is said to be \emph{niche-realizable through} $D$.
If a graph $G$ is niche-realizable through a $k$-partite tournament for an integer $k \ge 2$, then we say that the pair $(G, k)$ is \emph{niche-realizable} for notational convenience.

Bowser~{\it et al.}~\cite{bowser1999niche} studied the graphs that are niche-realizable through a tournament and Eoh~{\it et al.}~\cite{eoh2018niche} studied the graphs that are niche-realizable through a bipartite tournament.
We extend their work by studying niche-realizable pairs $(G, k)$ for a graph $G$ and an integer $k \ge 3$.

We first show that the niche graph of a $k$-partite tournament is connected if $k \ge 4$ and has at most three components if $k \ge 3$   (Theorem~\ref{thm:K4 connected} and Corollary~\ref{cor:the number of components}). Then we find all the niche-realizable pairs $(G,k)$  when $G$ is disconnected (Theorems~\ref{thm:3components} and \ref{thm:2components}). We show that the niche graph of a $k$-partite tournament contains no induced path of length $5$ (Theorem~\ref{thm:diameter}). Finally, we find all  the niche-realizable pairs $(G,k)$ when $G$ is a complete graph and when $G$ is a connected triangle-free graph (Theorems~\ref{thm:complete} and \ref{thm:triangle-free}).

\section{Preliminaries}
For a digraph $D$, a digraph is said to be the \emph{converse} of $D$ and denoted by $D^{\leftarrow}$ if its vertex set is $V(D)$ and its arc set is $\{(u, v) \mid (v, u) \in A(D)\}$.

By the definition of niche graph, the following lemmas are immediately true.
\begin{Lem}\label{lem:inverse}
For a digraph $D$, the niche graph of $D$ and the niche graph of $D^{\leftarrow}$ are the same.
\end{Lem}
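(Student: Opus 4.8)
The plan is to prove the equality of the two graphs directly from the definition of the niche graph, since the statement is essentially a symmetry observation. First I would note that $V(D^{\leftarrow}) = V(D)$ by the definition of the converse, so the niche graphs $\NNN(D)$ and $\NNN(D^{\leftarrow})$ share the same vertex set; it therefore remains only to show that they have the same edge set.

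To this end, I would fix a pair of distinct vertices $u, v \in V(D)$ and write out the condition for $uv$ to be an edge of $\NNN(D^{\leftarrow})$. By definition, $uv \in E(\NNN(D^{\leftarrow}))$ if and only if there is a vertex $w$ with $(u,w),(v,w) \in A(D^{\leftarrow})$, or with $(w,u),(w,v) \in A(D^{\leftarrow})$. The key step is to translate each of these arc conditions back into $D$ using the defining relation $(x,y) \in A(D^{\leftarrow}) \iff (y,x) \in A(D)$: the first clause becomes $(w,u),(w,v) \in A(D)$, while the second clause becomes $(u,w),(v,w) \in A(D)$.

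Thus the condition for $uv$ to be an edge of $\NNN(D^{\leftarrow})$ reads: there is a $w$ with $(w,u),(w,v) \in A(D)$, or with $(u,w),(v,w) \in A(D)$. This is exactly the defining condition for $uv$ to be an edge of $\NNN(D)$; the two disjuncts have merely been interchanged, and the disjunction ``common out-neighbor or common in-neighbor'' is symmetric under this interchange. Hence $E(\NNN(D^{\leftarrow})) = E(\NNN(D))$, and the two niche graphs coincide.

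There is no real obstacle here: the entire content is that reversing every arc of $D$ turns a common out-neighbor of $u$ and $v$ into a common in-neighbor and vice versa, and the niche-graph edge relation already treats these two situations symmetrically. The only point requiring any care is to apply the converse relation correctly to both clauses, keeping straight which vertex plays the role of head and which of tail, so that the two defining clauses are seen to be interchanged rather than altered.
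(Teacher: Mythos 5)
Your proof is correct and is exactly the straightforward verification the paper has in mind: the paper states this lemma without proof as ``immediately true'' from the definition of niche graph, and your argument simply makes explicit that reversing all arcs interchanges the two disjuncts (common out-neighbor versus common in-neighbor) of the edge condition. Nothing further is needed.
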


\begin{Lem}\label{lem:subgraph}
Let $D$ be a digraph and $D'$ be a subdigraph of $D$.
Then the niche graph of $D'$ is a subgraph of the niche graph of $D$.
\end{Lem}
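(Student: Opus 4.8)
The plan is to verify the two defining conditions of the subgraph relation directly from the definition of the niche graph, exploiting the fact that a subdigraph retains both a subset of vertices and a subset of arcs. I would begin by recording that, by definition of a subdigraph, $V(D') \subseteq V(D)$ and $A(D') \subseteq A(D)$. The argument then splits into the containment of vertex sets and the containment of edge sets.

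For the vertex sets, observe that $V(\NNN(D')) = V(D')$ and $V(\NNN(D)) = V(D)$ by the definition of the niche graph, so the containment $V(\NNN(D')) \subseteq V(\NNN(D))$ is immediate from $V(D') \subseteq V(D)$. For the edge sets, I would take an arbitrary edge $uv$ of $\NNN(D')$ and trace the witness through the definition: there is a vertex $w \in V(D')$ such that either $(u,w), (v,w) \in A(D')$ (a common out-neighbor) or $(w,u), (w,v) \in A(D')$ (a common in-neighbor). Since $A(D') \subseteq A(D)$ and $w \in V(D') \subseteq V(D)$, the very same vertex $w$ and the same pair of arcs witness the corresponding condition in $D$, so $uv$ is an edge of $\NNN(D)$. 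This yields $E(\NNN(D')) \subseteq E(\NNN(D))$, and together with the vertex containment it shows that $\NNN(D')$ is a subgraph of $\NNN(D)$.

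Because every step is a direct substitution into the definition, there is no genuine obstacle here; the only point requiring a modicum of care is to treat both alternatives in the definition of an edge (common out-neighbor and common in-neighbor) symmetrically, which the single inclusion $A(D') \subseteq A(D)$ handles uniformly. This is precisely why the authors can assert that the lemma is immediately true, and I would present the proof in one or two sentences rather than as a formal case analysis.
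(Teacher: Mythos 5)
Your proof is correct and matches exactly the reasoning the paper has in mind: the authors state this lemma without proof as ``immediately true'' from the definition, and your argument---vertex containment from $V(D') \subseteq V(D)$, plus the observation that any witness $w$ with its pair of arcs in $A(D') \subseteq A(D)$ still witnesses the edge in $\NNN(D)$---is precisely that immediate verification spelled out. No gaps; nothing further is needed.
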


\begin{Lem} \label{lem:in-out-degree-condition}
For a digraph $D$, if the niche graph of $D$ is $K_m$-free, then $  d^+_D(u) \leq m-1$ and $ d^-_D(u)\leq m-1$ for each vertex $u$ in $D$.
\end{Lem}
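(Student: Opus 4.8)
The plan is to prove the contrapositive in the form of a direct contradiction: assuming the degree bound fails at some vertex, I will exhibit a copy of $K_m$ in $\NNN(D)$. The crucial observation is built directly into the definition of the niche graph, namely that any two out-neighbors of a common vertex are adjacent (they share that vertex as a common in-neighbor), and dually any two in-neighbors of a common vertex are adjacent (they share it as a common out-neighbor). Thus a large set of out-neighbors, or of in-neighbors, of a single vertex is forced to be a clique.

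First I would treat the out-degree bound. Suppose, toward a contradiction, that $d^+_D(u) \ge m$ for some vertex $u$. Then there are $m$ distinct vertices $v_1, \dots, v_m$ with $(u, v_i) \in A(D)$ for each $i$. For any $i \ne j$, the arcs $(u, v_i)$ and $(u, v_j)$ witness the second condition in the definition of $\NNN(D)$ with $w = u$, so $v_i v_j$ is an edge of $\NNN(D)$. Hence $\{v_1, \dots, v_m\}$ induces a $K_m$ in $\NNN(D)$, contradicting the hypothesis that $\NNN(D)$ is $K_m$-free. Therefore $d^+_D(u) \le m - 1$ for every vertex $u$.

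For the in-degree bound there are two equally short routes, and I would take the first. Route one: apply the out-degree result to the converse $D^{\leftarrow}$. By Lemma~\ref{lem:inverse} we have $\NNN(D^{\leftarrow}) = \NNN(D)$, so $\NNN(D^{\leftarrow})$ is also $K_m$-free; since $d^+_{D^{\leftarrow}}(u) = d^-_D(u)$, the already-proved out-degree bound applied to $D^{\leftarrow}$ gives $d^-_D(u) \le m-1$. Route two: repeat the argument symmetrically, taking $m$ distinct in-neighbors of $u$ and using the first clause of the definition (a common out-neighbor, with $w = u$) to form the clique.

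Honestly there is no real obstacle here: the whole content is the one-line remark that the neighbors of a fixed vertex in a fixed direction form a clique in the niche graph. The only point requiring a moment of care is reading off the correct clause of the niche-graph definition in each case — the second clause (common in-neighbor) for out-neighbors, and the first clause (common out-neighbor) for in-neighbors — and noting that $m$ vertices pairwise adjacent indeed constitute a forbidden $K_m$.
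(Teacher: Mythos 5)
Your proof is correct and is precisely the argument the paper leaves implicit: the paper states this lemma without proof as an immediate consequence of the definition, and your observation that the out-neighbors (resp.\ in-neighbors) of a fixed vertex $u$ are pairwise adjacent in $\NNN(D)$ via the second (resp.\ first) clause with $w=u$ is exactly that intended one-line justification. Your alternative route for the in-degree bound via $D^{\leftarrow}$ and Lemma~\ref{lem:inverse} is also valid, though no shorter than the symmetric argument.
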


It is easy to check that the following lemma is true.
\begin{Lem}\label{lem:K3}
Let $D$ be an orientation of $K_3$.
Then the niche graph of $D$ is isomorphic to
\[
\begin{cases}
  I_3 & \mbox{if $D$ is a directed cycle};  \\
  P_3 & \mbox{otherwise}.
\end{cases}
\]
\end{Lem}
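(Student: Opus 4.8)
The plan is to reduce the eight orientations of $K_3$ to two representatives using symmetry, and then read off the niche graph of each directly from the definition. Up to relabeling the three vertices, an orientation of $K_3$ is either a directed $3$-cycle or a transitive tournament, that is, the acyclic orientation with a unique source and a unique sink. Since isomorphic digraphs evidently have isomorphic niche graphs and, by Lemma~\ref{lem:inverse}, a digraph and its converse share the same niche graph, it suffices to verify the claim for one representative of each of these two classes.

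For a directed $3$-cycle, say with arcs $(a,b),(b,c),(c,a)$, every vertex has exactly one out-neighbor and one in-neighbor, and across the three vertices these single out-neighbors (respectively in-neighbors) are pairwise distinct. Hence no two vertices admit a common $w$ with $(u,w),(v,w)\in A(D)$, nor a common $w$ with $(w,u),(w,v)\in A(D)$, so $\NNN(D)$ has no edge and equals $I_3$. For the transitive tournament, say with arcs $(a,b),(a,c),(b,c)$, I would check the three pairs in turn: $a$ and $b$ both have $c$ as an out-neighbor, and $b$ and $c$ both have $a$ as an in-neighbor, whereas $a$ and $c$ share neither a common out-neighbor nor a common in-neighbor. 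Thus the only edges are $ab$ and $bc$, which together form $P_3$.

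Because each case reduces to a one-line computation straight from the definition, there is no substantial obstacle here. The only point requiring care is the opening reduction: I would confirm that the directed $3$-cycle and the transitive tournament genuinely exhaust all eight orientations up to isomorphism, and that the niche graph is invariant both under vertex relabeling and under passing to the converse, so that testing a single representative per class is legitimate.
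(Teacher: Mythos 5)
Your proof is correct, and since the paper offers no proof of this lemma (it is stated as ``easy to check''), your direct verification---reducing to the two isomorphism classes of tournaments on three vertices and computing the niche graph of each from the definition---is exactly the routine argument the authors intended. One small remark: the appeal to Lemma~\ref{lem:inverse} is harmless but unnecessary, since relabeling alone already reduces the eight orientations to the directed $3$-cycle and the transitive tournament.
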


Bowser~{\it et al.}~\cite{bowser1999niche} have  shown that the complement of the niche graph of a tournament is one of the following: a cycle of odd order, a path
of even order, a forest of odd order consisting of two paths, a forest of even order
consisting of three paths, or a forest of four or more paths.
By this result, we have the following lemma.

\begin{Lem}\label{lem:K4}
The niche graph of an orientation of $K_4$ is connected.
\end{Lem}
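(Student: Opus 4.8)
The plan is to invoke the classification of Bowser~{\it et al.}, quoted just above, applied to our orientation $D$ of $K_4$, which is a tournament on four vertices. That theorem tells us that $\overline{\NNN(D)}$ is one of five prescribed graphs, so it suffices to determine which of them can occur on four vertices and to verify that each yields a connected complement.

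First I would use a parity argument to discard the odd-order options. Since $|V(D)| = 4$, the complement $\overline{\NNN(D)}$ has exactly four vertices; hence it can be neither a cycle of odd order nor a forest of odd order consisting of two paths. This leaves only three possibilities: a path of even order, which on four vertices is $P_4$; a forest of (even) order four consisting of three paths, which forces the partition $4 = 2+1+1$ and hence the disjoint union $P_2 \cup P_1 \cup P_1$; and a forest of four paths, which is the empty graph $\overline{K_4}$.

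It then remains to take complements. One checks that $\overline{P_4} \cong P_4$, that $\overline{P_2 \cup P_1 \cup P_1} \cong K_4 - e$, and that $\overline{\overline{K_4}} = K_4$, each of which is connected, establishing the claim. A more uniform way to finish, sidestepping the case check, is to observe that in every surviving case $\overline{\NNN(D)}$ is a disjoint union of paths, hence a forest of maximum degree at most two. Were $\NNN(D)$ disconnected, some bipartition of $V(D)$ into nonempty parts $A$ and $B$ would put all $|A|\cdot|B|$ crossing pairs into $\overline{\NNN(D)}$; with four vertices this means $\{|A|,|B|\} = \{1,3\}$, forcing a vertex of degree three, or $|A| = |B| = 2$, forcing a copy of $K_{2,2} \cong C_4$. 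A union of paths admits neither a vertex of degree three nor a cycle, so $\NNN(D)$ must be connected.

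The only care needed is in the bookkeeping of the parity restrictions, namely correctly matching each of the five structures in Bowser's list against the constraint that the order is exactly four; once this is done the remaining verification is routine.
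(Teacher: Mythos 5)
Your proof is correct and takes essentially the same route as the paper: the paper deduces the lemma directly from the quoted Bowser et al.\ classification without writing out the case analysis, which you supply. Your parity bookkeeping and the complement checks ($\overline{P_4}\cong P_4$, $\overline{P_2\cup P_1\cup P_1}\cong K_4-e$, $\overline{\overline{K_4}}=K_4$) are all accurate, so the details you add are a valid filling-in of the paper's implicit argument.
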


\begin{Thm}\label{thm:K4 connected}
For $k \ge 4$, the niche graph of a $k$-partite tournament is connected.
\end{Thm}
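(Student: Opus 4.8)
The plan is to reduce the whole statement to the case of two vertices lying in distinct parts, and to settle that case by embedding the two vertices into an orientation of $K_4$ so that Lemma~\ref{lem:K4} applies. Throughout, let $D$ be a $k$-partite tournament with parts $V_1, \dots, V_k$ and $k \ge 4$. The key structural observation is that between any two vertices in different parts there is exactly one arc, so any four vertices chosen from four distinct parts induce an orientation of $K_4$ in $D$.

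First I would establish the claim that any two vertices lying in distinct parts are in the same component of $\NNN(D)$. Given $x \in V_i$ and $y \in V_j$ with $i \neq j$, use the hypothesis $k \ge 4$ to pick two further distinct parts $V_a, V_b$ with $a, b \notin \{i,j\}$, and choose $z \in V_a$ and $w \in V_b$. Then $\{x,y,z,w\}$ meets four distinct parts, so the subdigraph $D[\{x,y,z,w\}]$ is an orientation of $K_4$. By Lemma~\ref{lem:K4} its niche graph is connected, and by Lemma~\ref{lem:subgraph} that niche graph is a subgraph of $\NNN(D)$; hence $x$ and $y$ lie in the same component of $\NNN(D)$.

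It then remains to treat two vertices $x, y$ in the same part $V_i$, and here I would simply bridge through an auxiliary vertex: pick any $z$ in a part other than $V_i$, which exists since $k \ge 2$. Applying the claim to the pairs $(x,z)$ and $(z,y)$, each of which straddles two distinct parts, shows that $x$ and $z$ are in one component and $z$ and $y$ are in one component, whence $x$ and $y$ are in the same component. Combining the two cases gives that all vertices of $D$ lie in a single component, i.e., $\NNN(D)$ is connected.

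The substantive content is already packaged inside Lemma~\ref{lem:K4}, so I do not expect a serious obstacle beyond bookkeeping. The two points to verify carefully are that the parts are nonempty, so the auxiliary vertices $z, w$ genuinely exist, and that four vertices from four distinct parts always yield a true $K_4$-orientation. The hypothesis $k \ge 4$ is exactly what guarantees enough parts to embed a $K_4$ containing any prescribed pair from distinct parts; the same-part case then comes for free from the fact that lying in a common component is an equivalence relation.
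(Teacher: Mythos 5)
Your proof is correct and follows essentially the same route as the paper: reduce to a pair of vertices in distinct parts, embed them in an induced orientation of $K_4$ using two auxiliary vertices from two further parts (possible since $k \ge 4$), and invoke Lemma~\ref{lem:K4} together with Lemma~\ref{lem:subgraph}, handling the same-part case by bridging through a vertex in another part. No gaps; the argument matches the paper's proof step for step.
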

\begin{proof}
Let $G$ be the niche graph of the $k$-partite tournament $D$.
We denote the partite sets of $D$ by $(X_1, X_2, \ldots, X_k)$.
Take two vertices $x$ and $y$ in $G$.
It suffices to show that $x$ and $y$ are connected in $G$.

Suppose that $x$ and $y$ belong to different partite sets in $D$.
Without loss of generality, we may assume that $x \in X_1$ and $y \in X_2$.
Since $k \ge 4$, we may take $z \in X_3$ and $w \in X_4$.
Let $D_1$ be the subdigraph of $D$ induced by $\{x, y, z, w\}$.
Then $D_1$ is an orientation of $K_4$.
Thus, by Lemma~\ref{lem:K4}, the niche graph of $D_1$ is connected.
By Lemma~\ref{lem:subgraph}, the niche graph of $D_1$  is a subgraph of $G$ and so $x$ and $y$ are connected in $G$.

Now suppose that $x$ and $y$ belong to the same partite set in $D$.
Then, without loss of generality, we may assume that $\{x, y\} \subset X_4$.
Take a vertex $z$ in $X_i$ for some $i \in \{1,2,3 \}$.
Since $x$ (resp.\ $y$) and $z$ belong to different partite set in $D$, $x$ (resp.\ $y$) and $z$ are connected in $G$ by the previous argument.
Therefore $x$ and $y$ are connected in $G$.
\end{proof}

A \emph{stable set} of a graph is a set of vertices no two of which are adjacent. A stable set in a graph is \emph{maximum} if the graph contains no larger stable set. The cardinality of a maximum stable set in a graph $G$ is called the \emph{stability number} of $G$, denoted by $\alpha(G)$.

\begin{Thm}\label{thm:stability number}
For $k \ge 3$, the niche graph of a $k$-partite tournament has stability number at most $3$.
\end{Thm}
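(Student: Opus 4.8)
The plan is to argue by contradiction: assume the niche graph $\NNN(D)$ of a $k$-partite tournament $D$ with $k \ge 3$ contains a stable set $S$ with $|S| = 4$, and derive a contradiction. Denote the partite sets of $D$ by $X_1, \ldots, X_k$. Everything rests on one elementary translation of stability: if two vertices $p,q$ have a common out-neighbor then $pq \in E(\NNN(D))$, and likewise if they have a common in-neighbor. Hence no vertex of $D$ may serve as a common prey (out-neighbor of two members) or a common predator (in-neighbor of two members) of any two members of $S$.

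The core step I would isolate as a claim is: no vertex $u \in V(D)$ lies in a partite set different from three distinct members $a,b,c$ of $S$. Indeed, since $u$ is in a different part from each of $a,b,c$, the tournament forces an arc between $u$ and each of them. If $u$ had out-arcs to two of them, those two would share the predator $u$; if $u$ had in-arcs from two of them, those two would share the prey $u$. So $u$ sends at most one arc into $\{a,b,c\}$ and receives at most one arc from it, for a total of at most two arcs, contradicting that all three arcs are present. This claim does all the work.

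It then remains to locate a vertex $u$ witnessing the claim. Applying it to a member $u \in S \cap X_i$ shows the part $X_i$ containing it must hold at least $|S|-2 = 2$ members of $S$ (otherwise $u$ differs from at least three members); applying it to a vertex outside $X_i$ shows no part holds three members. Thus every part meeting $S$ holds exactly two members, forcing the distribution $2+2$ across exactly two parts, say $X_1$ and $X_2$. Here the hypothesis $k \ge 3$ becomes essential: choosing any $u \in X_3$ places $u$ in a part different from all four members of $S$, hence from three of them, and the claim gives the final contradiction. The alternative configuration in which the four members occupy four distinct parts (possible only for $k \ge 4$) is, if one prefers, immediate from Lemma~\ref{lem:K4}.

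I expect the main obstacle to be the $2+2$ configuration, which cannot be excluded by looking at $S$ alone: the four members can genuinely induce a directed $4$-cycle in $D$, a configuration fully compatible with being a stable set. The resolution is precisely the appeal to a vertex of a third partite set, which is why $k \ge 3$ (rather than merely $k \ge 2$) is needed for the stability bound of $3$.
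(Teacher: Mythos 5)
Your proof is correct and takes essentially the same approach as the paper: your core claim is exactly the paper's pigeonhole argument (a vertex lying in a partite set different from three members of the stable set must dominate or be dominated by two of them, forcing an edge), applied first to a member alone in its part and then, for the $2{+}2$ configuration, to a vertex of a third partite set---which is precisely where the paper also invokes $k \ge 3$. The only difference is organizational: the paper splits into ``$S$ contained in two parts'' versus ``some part meeting $S$ exactly once,'' while you first derive the exact $2{+}2$ distribution before applying the third-part vertex.
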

\begin{proof}
Let $G$ be the niche graph of a $k$-partite tournament $D$.
Suppose, to the contrary, $\alpha(G) \ge 4$.
Then we may take a stable set of size $4$ in $G$.
We denote it by $\{x_1, x_2, x_3, x_4\}$.

Suppose that there exist partite sets $X_1$ and $X_2$ of $D$ such that $\{x_1, x_2, x_3, x_4\} \subset X_1 \cup X_2$.
Since $k \ge 3$, we may take a vertex $x_5$ in a partite set $X_3$ of $D$ distinct from $X_1$ and $X_2$.
Since $D$ is a $k$-partite tournament, $\{x_1, x_2, x_3, x_4\} \subset N_{D}^+(x_5) \cup N_{D}^-(x_5)$.
Therefore $|N_{D}^+(x_5) \cap \{x_1, x_2, x_3, x_4\}| \ge 2$ or $|N_{D}^-(x_5) \cap \{x_1, x_2, x_3, x_4\}| \ge 2$.
Yet, each of $N_{D}^+(x_5) \cap \{x_1, x_2, x_3, x_4\}$ and $N_{D}^-(x_5) \cap \{x_1, x_2, x_3, x_4\}$ forms a clique in $G$, which is a contradiction to the assumption that  $\{x_1, x_2, x_3, x_4\}$ is a stable set of $G$.
Hence there are three elements in $\{x_1, x_2, x_3, x_4\}$ belonging to distinct partite sets.
Then there is a partite set $X$ satisfying $|X \cap \{x_1, x_2, x_3, x_4\}|=1$.
Without loss of generality, we may assume that $X \cap \{x_1, x_2, x_3, x_4\} = \{x_4\}$.
Then $\{x_1, x_2, x_3\} \subset N_{D}^+(x_4) \cup N_{D}^-(x_4)$ and so $|N_{D}^+(x_4) \cap \{x_1, x_2, x_3\}| \ge 2$ or $|N_{D}^-(x_4) \cap \{x_1, x_2, x_3\}| \ge 2$.
Since each of $N_{D}^+(x_4) \cap \{x_1, x_2, x_3\}$ and $N_{D}^-(x_4) \cap \{x_1, x_2, x_3\}$ forms a clique in $G$, $\{x_1, x_2, x_3\}$ cannot be a stable set of $G$, which is a contradiction.
This completes the proof.
\end{proof}

\noindent
From the above theorem, the following corollary immediately follows.

\begin{Cor}\label{cor:the number of components}
For $k \ge 3$, the niche graph of a $k$-partite tournament has at most three components.
\end{Cor}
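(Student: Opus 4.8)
The plan is to deduce this statement directly from Theorem~\ref{thm:stability number}, which asserts that for $k \ge 3$ the niche graph of a $k$-partite tournament has stability number at most $3$. The key observation I would invoke is the elementary fact that in any graph, selecting one vertex from each connected component produces a stable set: two vertices lying in distinct components cannot be adjacent, since an edge would place them in the same component. Consequently, the number of connected components of a graph never exceeds its stability number.

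Concretely, I would let $G$ be the niche graph of a $k$-partite tournament with $k \ge 3$, and argue by contradiction. Suppose $G$ has at least four components. Picking one representative vertex from each of four distinct components yields four vertices that are pairwise nonadjacent, hence a stable set of size $4$ in $G$. This forces $\alpha(G) \ge 4$, which directly contradicts Theorem~\ref{thm:stability number}. Therefore $G$ has at most three components, as claimed.

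There is essentially no obstacle to overcome here, which is why the statement is phrased as a corollary rather than a theorem: it is a one-line consequence of the bound on the stability number, relying solely on the standard inequality relating the number of components of a graph to its stability number. The entire content of the result is already carried by Theorem~\ref{thm:stability number}.
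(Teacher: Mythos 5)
Your proof is correct and matches the paper's intent exactly: the paper states this corollary as an immediate consequence of Theorem~\ref{thm:stability number}, and the implicit argument is precisely your observation that one vertex per component yields a stable set, so the number of components is bounded by $\alpha(G) \le 3$.
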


\section{Niche-realizable pairs $(G,k)$ when $G$ is disconnected}
In this section, we completely characterize the niche graphs of $k$-partite tournaments for $k \ge 3$ which are disconnected.

Theorem~\ref{thm:K4 connected} tells us that, for a disconnected graph $G$ and $k \ge 3$, if $(G, k)$ is niche-realizable, then $k=3$.
In addition, the niche graph of a $k$-partite tournament has at most three components for $k \ge 3$ by Corollary~\ref{cor:the number of components}.

We first characterize the niche-realizable pair $(G, k)$ for a graph $G$ with three components.

\begin{Thm}\label{thm:3components}
Let $G$ be a graph with three components and $k$ be an integer greater than or equal to $3$.
Then $(G, k)$ is niche-realizable if and only if $k=3$ and $G$ is isomorphic to $K_p \cup K_q \cup K_r$ for positive integers $p$, $q$, and $r$.
\end{Thm}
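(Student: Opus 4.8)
The statement is a biconditional, so I would prove the two directions separately. For the forward direction, suppose $(G,k)$ is niche-realizable with $G$ having three components. Since $G$ is disconnected, Theorem~\ref{thm:K4 connected} forces $k=3$ (as already noted in the discussion preceding the statement), so it remains only to show that each component of $G$ is complete. Here I would lean on Theorem~\ref{thm:stability number}: the niche graph of a $3$-partite tournament has stability number at most $3$. Suppose for contradiction that some component, say $C_1$, is not complete; then $C_1$ contains two non-adjacent vertices $u,v$. Picking one vertex $w_2$ from $C_2$ and one vertex $w_3$ from $C_3$, the set $\{u,v,w_2,w_3\}$ is stable, since $u,v$ are non-adjacent by assumption and any two vertices lying in distinct components are non-adjacent. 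This yields a stable set of size $4$, contradicting $\alpha(G)\le 3$. Hence every component is complete and $G\cong K_p\cup K_q\cup K_r$, where $p,q,r$ are the orders of the three components.

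For the sufficiency direction, given positive integers $p,q,r$ I would exhibit a concrete $3$-partite tournament realizing $K_p\cup K_q\cup K_r$. Take partite sets $A,B,C$ with $|A|=p$, $|B|=q$, $|C|=r$, and orient all arcs cyclically: every vertex of $A$ dominates every vertex of $B$, every vertex of $B$ dominates every vertex of $C$, and every vertex of $C$ dominates every vertex of $A$ (this generalizes the directed triangle of Lemma~\ref{lem:K3}, which handles the case $p=q=r=1$). I would then verify the niche graph directly from the out- and in-neighborhoods: for $a\in A$ we have $N^+(a)=B$ and $N^-(a)=C$, and analogously for $B$ and $C$. Any two vertices of $A$ have a common out-neighbor in $B$, so $A$ induces $K_p$ (using $q\ge 1$); symmetrically $B$ and $C$ induce $K_q$ and $K_r$. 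For vertices in different parts, say $a\in A$ and $b\in B$, the out-neighborhoods $B$ and $C$ are disjoint and the in-neighborhoods $C$ and $A$ are disjoint, so $a$ and $b$ share neither a common prey nor a common predator and are therefore non-adjacent; the same computation rules out all cross-part edges. Thus the niche graph is exactly $K_p\cup K_q\cup K_r$.

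The necessity direction is essentially immediate from the stability-number bound, so I expect the only real work to lie in sufficiency, and even there the main task is simply choosing the right orientation. The cyclic orientation (a ``round-robin'' of domination between the three parts) is the natural candidate precisely because it keeps the three out-neighborhoods and the three in-neighborhoods pairwise separated across parts, which is exactly what prevents cross-component edges while forcing each part into a single clique. The one point requiring care is that $p,q,r$ all be positive, so that each part admits a nonempty common neighbor certifying its internal clique; this is guaranteed by the hypothesis that $p,q,r$ are positive integers.
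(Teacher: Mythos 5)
Your proposal is correct and matches the paper's proof essentially step for step: necessity via Theorem~\ref{thm:K4 connected} for $k=3$ and Theorem~\ref{thm:stability number} for completeness of the components (your explicit stable set $\{u,v,w_2,w_3\}$ just spells out the paper's assertion that a non-complete component forces $\alpha(G)\ge 4$), and sufficiency via exactly the same cyclic $3$-partite tournament, whose verification the paper leaves as ``easy to check'' and you carry out in full.
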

\begin{proof}
Suppose that $(G, k)$ is niche-realizable.
If there exists a component which is not isomorphic to a complete graph, then $\alpha(G)\geq 4$, which contradicts Theorem~\ref{thm:stability number}. Therefore $G$ is isomorphic to $K_p \cup K_q \cup K_r$ for positive integers $p$, $q$, and $r$.
Since $K_p \cup K_q \cup K_r$ is disconnected, $k \le 3$ by Theorem~\ref{thm:K4 connected}.
Therefore the ``only if'' part is true.

To show the ``if'' part, let $D$ be a digraph with the vertex set \[\{x_1, \ldots, x_p, y_1, \ldots, y_q,z_1, \ldots, z_r\}\] and the arc set $$\{(x_i, y_j) \mid i \in [p] \text{ and } j \in [q]\} \cup \{(y_j, z_l) \mid j \in [q] \text{ and } l \in [r]\} \cup \{(z_l, x_i) \mid l \in [r] \text{ and } i \in [p]\}.$$
Then it is easy to check that $D$ is a $3$-partite tournament and the niche graph of $D$ is isomorphic to $K_p \cup K_q \cup K_r$.
Hence the ``if'' part is true.
\end{proof}

Let $G$ be a graph.
Two vertices $u$ and $v$ of $G$ are said to be \emph{true twins} if they have the same closed neighborhood, and denoted by $u \equiv_G v$.
We may introduce an analogous notion for a digraph.
Let $D$ be a digraph.
Two vertices $u$ and $v$ of $D$ are said to be \emph{true twins} if they have the same open out-neighborhood and open in-neighborhood, and denoted by $u \equiv_D v$.

The following lemma is true by definitions of niche graph and true twins.
\begin{Lem}\label{lem:homo}
Let $D$ be a digraph without isolated vertices.
If vertices $u$ and $v$ are true twins in $D$, then $u$ and $v$ are true twins in $\NNN(D)$.
\end{Lem}
\begin{proof}
Suppose that two vertices $u$ and $v$ are true twins in $D$.
Then $N^+_D(u)=N^+_D(v)$ and $N^-_D(u)=N^-_D(v)$.
Therefore, by the definition of niche graph, $u$ and $v$ have the same open neighborhood in $\NNN(D)$.
Since $D$ has no isolated vertices, $N^+_D(u)\neq \emptyset$ or $N^-_D(u)\neq \emptyset$.
Thus $u$ and $v$ have a common out-neighbor or a common in-neighbor in $D$ and so they are adjacent in $\NNN(D)$.
Hence $u$ and $v$ have the same closed neighborhood in $\NNN(D)$.
\end{proof}

\begin{Lem} \label{lem:homo-same-partite}
Let $D$ be a multipartite tournament.
If vertices $u$ and $v$ are true twins in $D$, then $u$ and $v$ are in the same partite set.
\end{Lem}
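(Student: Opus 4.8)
The plan is to argue by contradiction and exploit the fact that in a multipartite tournament any two vertices lying in different partite sets are necessarily joined by an arc, whereas the true-twin condition forbids exactly such an arc between $u$ and $v$. So first I would suppose, for contradiction, that $u$ and $v$ belong to distinct partite sets of $D$. Since $D$ is a multipartite tournament, there is an arc between $u$ and $v$; up to relabeling we may assume $(u,v) \in A(D)$, which places $v$ in the open out-neighborhood of $u$, that is, $v \in N_D^+(u)$.

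Next I would invoke the hypothesis that $u$ and $v$ are true twins in $D$, which by definition gives $N_D^+(u) = N_D^+(v)$. Combining this equality with $v \in N_D^+(u)$ yields $v \in N_D^+(v)$, i.e.\ $(v,v) \in A(D)$. But a multipartite tournament is a (loopless) digraph, so $(v,v) \notin A(D)$, a contradiction. The case $(v,u) \in A(D)$ is entirely symmetric: it forces $u \in N_D^+(v) = N_D^+(u)$, producing the forbidden loop $(u,u)$. Hence no arc between $u$ and $v$ can exist, and therefore $u$ and $v$ must lie in the same partite set.

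I do not anticipate any substantive obstacle here; the entire content of the proof is the single observation that the equality of open out-neighborhoods, applied to an arc joining $u$ and $v$, manufactures a self-loop. The only point requiring a moment of care is to handle both possible orientations of the arc between $u$ and $v$, which is dispatched by the ``without loss of generality'' reduction or, equivalently, by noting that the same argument applied to the open in-neighborhoods $N_D^-(u) = N_D^-(v)$ covers the reversed arc. Note that the loopless assumption is exactly what makes the argument go through, and it is automatic for multipartite tournaments.
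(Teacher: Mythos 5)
Your proposal is correct and follows exactly the paper's argument: assume $u$ and $v$ lie in different partite sets, take the arc $(u,v) \in A(D)$ without loss of generality, and use $N_D^+(u) = N_D^+(v)$ to manufacture the forbidden loop $(v,v) \in A(D)$. The only difference is that you spell out the symmetric case and the looplessness explicitly, which the paper leaves implicit.
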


\begin{proof}
Suppose that vertices $u$ and $v$ are true twins in $D$.
If $u$ and $v$ are not in the same partite set, then we may assume $(u,v) \in A(D)$ and so, by the definition of true twins, $(v,v) \in A(D)$, which contradicts the hypothesis that $D$ is a multipartite tournament.
\end{proof}

\begin{Prop} \label{prop:G-v-}
Given a graph $G$ with at least four vertices,
suppose that $G$ is niche-realizable through a $k$-partite tournament $D$ for $ k \geq 3$, and vertices $u$ and $v$ are true twins in $D$.
Then $D-v$ is a $k$-partite tournament whose niche graph is $G-v$.
\end{Prop}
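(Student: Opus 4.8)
The plan is to verify the two assertions separately: first that $D-v$ is again a $k$-partite tournament, and then that its niche graph equals $G-v$. For the first assertion, since $u$ and $v$ are true twins in the multipartite tournament $D$, Lemma~\ref{lem:homo-same-partite} places them in a common partite set; deleting $v$ therefore leaves that partite set nonempty (it still contains $u$) while leaving the other $k-1$ partite sets untouched. Hence $D-v$ is still an orientation of a complete $k$-partite graph, that is, a $k$-partite tournament. (Note that the existence of true twins already forces a partite set of size at least $2$, so together with $k\ge 3$ the hypothesis that $G$ has at least four vertices is automatically met; it merely guarantees the objects are non-degenerate.)

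For the niche graph, I would first observe that $\NNN(D-v)$ and $G-v$ share the vertex set $V(D)\setminus\{v\}$, so only the edge sets must be compared. One inclusion is immediate: $D-v$ is a subdigraph of $D$, so by Lemma~\ref{lem:subgraph} the graph $\NNN(D-v)$ is a subgraph of $\NNN(D)=G$, and since it omits $v$ it is a subgraph of $G-v$. Thus $E(\NNN(D-v))\subseteq E(G-v)$.

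The substantive step is the reverse inclusion $E(G-v)\subseteq E(\NNN(D-v))$. Take an edge $ab$ of $G-v$, so $a,b\neq v$ and $ab\in E(\NNN(D))$; then there is a witness $w\in V(D)$ with $(a,w),(b,w)\in A(D)$, or $(w,a),(w,b)\in A(D)$. If $w\neq v$, the same witness works inside $D-v$ and we are done. The remaining case $w=v$ is exactly where the true-twin hypothesis is used: if $(a,v),(b,v)\in A(D)$, then $a,b\in N^-_D(v)=N^-_D(u)$, so $(a,u),(b,u)\in A(D)$; symmetrically, if $(v,a),(v,b)\in A(D)$ then $(u,a),(u,b)\in A(D)$. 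In either situation $u$ serves as a witness. It lies in $D-v$ because $u\neq v$, and since a multipartite tournament has no loops, the arc $(a,u)\in A(D)$ (resp.\ $(u,a)\in A(D)$) forces $u\neq a$, and likewise $u\neq b$; hence the required arcs survive in $D-v$. Therefore $ab\in E(\NNN(D-v))$, and combining the two inclusions gives $\NNN(D-v)=G-v$.

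The only delicate point, and the one to get right, is this last case $w=v$: one must argue that the twin $u$ can replace $v$ as a witness without collapsing onto an endpoint. This is precisely where the equalities $N^+_D(v)=N^+_D(u)$ and $N^-_D(v)=N^-_D(u)$, together with the loop-free structure of a multipartite tournament, do the work. Everything else is bookkeeping via Lemmas~\ref{lem:subgraph} and~\ref{lem:homo-same-partite}.
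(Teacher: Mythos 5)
Your proof is correct and follows essentially the same route as the paper's: Lemma~\ref{lem:homo-same-partite} gives that $D-v$ is still a $k$-partite tournament, Lemma~\ref{lem:subgraph} gives the inclusion $\NNN(D-v)\subseteq G-v$, and for the reverse inclusion the twin $u$ replaces $v$ as the witness for any edge whose only witness in $D$ was $v$. Your explicit check that $u$ cannot coincide with an endpoint (via loop-freeness) is a minor point the paper leaves implicit in its set-membership formulation, but the argument is the same.
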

\begin{proof}
Let $D'=D-v$.
By lemma~\ref{lem:homo-same-partite},
$D'$ is a $k$-partite tournament.
Since $D'$ is a subdigraph of $D$,
${\mathcal N}(D')$ is a subgraph of $G$ by Lemma~\ref{lem:subgraph}.
Therefore ${\mathcal N}(D')$ is a subgraph of $G-v$.
To show that $ G-v$ is a subgraph of ${\mathcal N}(D')$,
take an edge $xy$ in $G-v$.
Then $xy$ is an edge in $G$, so
$N^+_D(x)\cap N^+_D(y) \neq \emptyset$ or $N^-_D(x)\cap N^-_D(y) \neq \emptyset$.
By symmetry, we assume that $N^+_D(x)\cap N^+_D(y) \neq \emptyset$.
If $v \in N^+_D(x)\cap N^+_D(y)$, then
$u \in N^+_D(x)\cap N^+_D(y)$ and so
$u \in N^+_{D'}(x)\cap N^+_{D'}(y)$.
If $v \notin N^+_D(x)\cap N^+_D(y)$, then $N^+_D(x)\cap N^+_D(y)=N^+_{D'}(x)\cap N^+_{D'}(y)$.
Therefore we may conclude that $xy$ is an edge in ${\mathcal N}(D')$.
Thus $G-v$ is a subgraph of ${\mathcal N}(D')$ and so ${\mathcal N}(D') = G-v$.
\end{proof}

 \begin{Lem} \label{lem:1-1}
 Let $D$ be an orientation of $K_{2,1,1}$ with true twins. Then the niche graph of $D$ either is connected or has three components.
 \end{Lem}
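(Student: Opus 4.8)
The plan is to exploit that $D$ has only four vertices and a highly constrained arc pattern. Write the vertex set of $K_{2,1,1}$ as $\{a_1,a_2\}\cup\{b\}\cup\{c\}$, where $\{a_1,a_2\}$ is the partite set of size two and $b,c$ are the two singleton partite sets. Since any pair of true twins must lie in a common partite set (Lemma~\ref{lem:homo-same-partite}) and $\{a_1,a_2\}$ is the only partite set with more than one vertex, the hypothesis forces $a_1\equiv_D a_2$. In particular the arc between $a_i$ and $b$ points the same way for $i=1,2$, and likewise for the arc between $a_i$ and $c$. Thus an orientation $D$ with true twins is determined by three binary choices, namely the common direction of the arcs between the $a_i$ and $b$, the common direction of the arcs between the $a_i$ and $c$, and the direction of the single arc between $b$ and $c$; this leaves only $2^3=8$ orientations.

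First I would record which edges of $\NNN(D)$ can occur. Because $a_1\equiv_D a_2$ and $D$ has no isolated vertex, Lemma~\ref{lem:homo} (or a direct one-line check) gives that $a_1a_2\in E(\NNN(D))$ always. For the other pairs the possible common neighbor is forced: the only candidate common neighbor of $a_i$ and $b$ is $c$, that of $a_i$ and $c$ is $b$, and that of $b$ and $c$ is $a_1$ (equivalently $a_2$). I would translate each orientation choice into the relevant out- and in-neighborhoods and read off that each of the three cross edges $a_ib$, $a_ic$, $bc$ is present precisely when a single equality or inequality among the three binary parameters holds.

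The crux is then a short dichotomy: \emph{if any one of the three cross edges is present in $\NNN(D)$, then $\NNN(D)$ is connected.} I would prove this by checking the three cases against the equality/inequality conditions from the previous step. The presence of one cross edge, together with the permanent edge $a_1a_2$, prevents the remaining vertex from being isolated, because an attempt to isolate it forces two of the binary parameters to be simultaneously equal and unequal. Granting this, if $\NNN(D)$ is disconnected then all three cross edges are absent, so the only edge is $a_1a_2$ and the components are exactly $\{a_1,a_2\}$, $\{b\}$, and $\{c\}$, giving three of them. Hence $\NNN(D)$ is connected or has three components.

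The only real obstacle is bookkeeping: translating the eight orientations into neighborhoods and extracting the three cross-edge conditions without sign errors. The symmetries $\NNN(D)=\NNN(D^{\leftarrow})$ (Lemma~\ref{lem:inverse}) and the relabeling that swaps $b$ and $c$ (which interchanges two of the parameters and flips the third) could trim the eight cases to a few, but since the case analysis is so small I would simply verify the three cross-edge conditions directly and then invoke the dichotomy.
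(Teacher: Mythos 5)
Your proposal is correct, and its details check out: writing $\alpha$, $\beta$, $\gamma$ for the common direction of the arcs between the $a_i$ and $b$, between the $a_i$ and $c$, and of the arc between $b$ and $c$, the cross edge $a_ib$ lies in $\NNN(D)$ if and only if $\beta=\gamma$ (the only candidate witness being $c$), the edge $a_ic$ if and only if $\alpha\neq\gamma$, and the edge $bc$ if and only if $\alpha=\beta$; these three conditions are parity-linked so that exactly one of them can never hold, which is precisely the fact underlying your dichotomy that a single cross edge forces a second one and hence connectivity, while the all-absent case ($\alpha=\gamma$, $\beta\neq\gamma$) leaves only the permanent edge $a_1a_2$ and gives the three components $\{a_1,a_2\}$, $\{b\}$, $\{c\}$. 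The paper proves the same lemma by a different decomposition: after the identical reduction to $x_1\equiv_D x_2$ via Lemma~\ref{lem:homo-same-partite}, it splits on $d_D^+(x_1)\in\{2,1\}$ (using Lemma~\ref{lem:inverse} to discard $d_D^+(x_1)=0$) and in each connected subcase exhibits a vertex of in-degree or out-degree $3$, whose neighborhood is a clique of size $3$ in $\NNN(D)$ that, together with one further edge, connects everything at once; the disconnected outcome $K_2\cup K_1\cup K_1$ appears as one explicit arc pattern. Your parity bookkeeping buys a sharper structural picture --- it shows the disconnected case arises from exactly one orientation up to the symmetries of Lemma~\ref{lem:inverse} and the $b\leftrightarrow c$ swap, and it determines precisely which cross edges occur --- at the cost of setting up the three-parameter encoding; the paper's degree-based split is shorter because the big cliques it finds settle connectivity immediately without tracking individual cross edges. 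Both arguments are exhaustive checks of the same eight orientations and rest on the same auxiliary lemmas, so the difference is organizational rather than conceptual, but yours is a legitimately self-contained alternative.
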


 \begin{proof}
We denote the partite sets of $D$ by $(X_1,X_2,X_3)$.
Then we may assume that $X_1=\{x_1,x_2\}$, $X_2=\{x_3\}$, and $X_3=\{x_4\}$.
By the hypothesis, $D$ has true twins and so, by Lemma~\ref{lem:homo-same-partite}, $x_1$ and $x_2$ are true twins.
By Lemma~\ref{lem:inverse}, there are two cases to consider: $d_{D}^+(x_1)=2$; $d_{D}^+(x_1)=1$.
We first consider the case $d_{D}^+(x_1)=2$.
Then $N^+_D(x_1)=\{x_3,x_4\}$.
Since $x_1$ and $x_2$ are true twins,
$N^+_D(x_2)=\{x_3,x_4\}$.
Therefore
$N^-_D(x_3)\cap N^-_D(x_4) \neq \emptyset$.
Thus $x_3$ is adjacent to $x_4$ in ${\mathcal N}(D)$.
By symmetry, we may assume $N^+_D(x_3)=\{x_4\}$.
Then $N^-_D(x_4)=\{x_1,x_2,x_3\}$, so $\{x_1,x_2,x_3\}$ forms a clique in ${\mathcal N}(D)$.
Therefore ${\mathcal N}(D)$ is connected.

Now we consider the case $d_{D}^+(x_1)=1$.
Without loss of generality, we may assume that
$N^+_D(x_1)=\{x_3\}$. Then $N^+_D(x_2)=\{x_3\}$ and
$N^-_D(x_1)=N^-_D(x_2)=\{x_4\}$.
If $(x_3,x_4) \in A(D)$, then ${\mathcal N}(D) \cong K_2 \cup K_1 \cup K_1$.
Therefore ${\mathcal N}(D)$ has three components.
Suppose that $(x_3,x_4) \notin A(D)$, i.e. $(x_4,x_3) \in A(D)$.
Then $N^-_D(x_3)=\{x_1,x_2,x_4\}$, so $\{x_1,x_2,x_4\}$ forms a clique in  ${\mathcal N}(D)$.
Since $N^+_D(x_4)=\{x_1,x_2,x_3\}$, $\{x_1,x_2,x_3\}$ forms a clique in  ${\mathcal N}(D)$.
Therefore ${\mathcal N}(D)$ is connected.
 \end{proof}

\begin{Lem}\label{lem:4vertices diconnected}
Let $D$ be an orientation of $K_{2, 1, 1}$ whose niche graph is disconnected. Suppose that no two vertices are true twins in $D$.
Then the niche graph of $D$ is isomorphic to $P_3 \cup K_1$.
\end{Lem}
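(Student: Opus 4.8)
The plan is to run a complete but symmetry-reduced case analysis on the five arcs of $D$. Following the notation of Lemma~\ref{lem:1-1}, I denote the partite sets by $(X_1,X_2,X_3)$ with $X_1=\{x_1,x_2\}$, $X_2=\{x_3\}$, and $X_3=\{x_4\}$, so that the only non-adjacent pair in the underlying $K_{2,1,1}$ is $\{x_1,x_2\}$. By Lemma~\ref{lem:inverse} the niche graph is unchanged under reversing every arc, so I may assume $(x_3,x_4)\in A(D)$. For each $i\in\{1,2\}$ I record the pattern formed by the two arcs joining $x_i$ to $\{x_3,x_4\}$; this pattern takes one of four values, according to whether $x_i$ beats both of $x_3,x_4$, is beaten by both, beats $x_3$ only, or beats $x_4$ only.

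First I would translate each of the six possible edges of $\NNN(D)$ into an explicit condition on these arc directions. This is immediate from the definition of niche graph together with the fact that in $K_{2,1,1}$ the only candidate common neighbor of $x_1,x_3$ (resp.\ $x_1,x_4$) is $x_4$ (resp.\ $x_3$), and the only candidate common neighbors of $x_3,x_4$ are $x_1,x_2$. For instance, $x_1x_3$ is an edge if and only if $x_1$ and $x_3$ agree in their arc with $x_4$ (both beat $x_4$ or both lose to $x_4$); the edge $x_3x_4$ is present if and only if $x_1$ or $x_2$ beats both or is beaten by both of $x_3,x_4$; and $x_1x_2$ is an edge if and only if $x_1$ and $x_2$ agree on their arc with $x_3$, or agree on their arc with $x_4$. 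The edges $x_1x_4$ and $x_2x_3,x_2x_4$ admit the analogous descriptions.

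Next I would use the two hypotheses to prune. The assumption that no two vertices are true twins, combined with Lemma~\ref{lem:homo-same-partite} (which forces any true-twin pair to lie in $X_1$), means exactly that $x_1$ and $x_2$ realize \emph{distinct} arc patterns. Together with the automorphism of $D$ interchanging $x_1$ and $x_2$, this reduces the analysis to the six unordered pairs of distinct patterns. For each such pair I would read off the six edge conditions and determine the resulting graph. In five of the pairs the graph turns out to be connected, which one verifies by exhibiting a spanning path in each. In the single remaining pair, where one of $x_1,x_2$ beats $x_3$ and is beaten by $x_4$ while the other is beaten by $x_3$ and beats $x_4$, both edges $x_1x_2$ and $x_3x_4$ are absent, the first of these two vertices is adjacent to neither $x_3$ nor $x_4$ and hence is isolated, and the second is adjacent to both $x_3$ and $x_4$; this yields a $P_3$ together with an isolated vertex, that is, $P_3\cup K_1$.

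The per-case computations are all routine; the only real care needed is bookkeeping, namely confirming that the reduction by the converse (which pairs up all $32$ orientations into $16$ classes, each with a unique representative satisfying $(x_3,x_4)\in A(D)$) and by the $x_1\leftrightarrow x_2$ swap is genuinely exhaustive, so that no disconnected orientation is overlooked. Once the six-pair table is complete the conclusion is immediate, since exactly one pair produces a disconnected niche graph and that graph is $P_3\cup K_1$.
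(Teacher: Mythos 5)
Your proposal is correct, and I verified the table it relies on: with $(x_3,x_4)\in A(D)$ fixed, the five pattern pairs $\{A,B\}$, $\{A,C\}$, $\{A,D\}$, $\{B,C\}$, $\{B,D\}$ (where $A$ = beats both, $B$ = beaten by both, $C$ = beats $x_3$ only, $D$ = beats $x_4$ only) all yield connected niche graphs, each in fact containing a spanning path as you assert, while the pair $\{C,D\}$ yields exactly the digraph of Figure~\ref{fig:K112 case}, whose niche graph is $P_3\cup K_1$ with the pattern-$C$ vertex isolated. Your symmetry bookkeeping is also sound: no orientation equals its converse, so the $32$ labelled orientations split into $16$ converse-pairs each with a unique representative satisfying $(x_3,x_4)\in A(D)$; by Lemma~\ref{lem:homo-same-partite} the no-true-twins hypothesis is precisely the statement that $x_1$ and $x_2$ realize distinct patterns; and the $x_1\leftrightarrow x_2$ swap reduces the $12$ surviving ordered pairs to your $6$ unordered ones. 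Your route differs from the paper's in organization rather than substance. The paper does not tabulate: it first shows by contradiction that if some $x_i$ has out-degree $2$ (your pattern $A$) then $\NNN(D)$ is connected, invoking Lemma~\ref{lem:K3} on the induced orientations of $K_3$ on $\{x_1,x_3,x_4\}$ and $\{x_2,x_3,x_4\}$; it then eliminates out-degree $0$ (pattern $B$) by the converse, concluding $d^+_D(x_1)=d^+_D(x_2)=1$, after which twin-freeness and symmetry pin down the unique digraph directly. The paper's pruning buys brevity and exercises the induced-$K_3$ technique that it reuses in Lemma~\ref{lem:5vertices diconnected}; your enumeration buys a self-contained, purely mechanical verification needing neither Lemma~\ref{lem:K3} nor any argument by contradiction, at the cost that the five connectivity computations, which you only assert as routine, would need to be displayed (e.g., by listing the spanning path in each case) in a finished write-up.
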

\begin{proof}
Let $\{x_1, x_2\}$, $\{x_3\}$, and $\{x_4\}$ be the partite sets of $D$. First we consider the case $|N^+_D(x_1)|=2$ or $|N^+_D(x_2)|=2$, i.e. $N_D^+(x_1)=\{x_3,x_4\}$ or $N_D^+(x_2)=\{x_3,x_4\}$.
By symmetry, we may assume that $N_D^+(x_1)=\{x_3,x_4\}$. Then $x_3$ and $x_4$ are adjacent in $\NNN(D)$.
Since $x_1$ and $x_2$ are not true twins in $D$, at least one of $x_3$ and $x_4$ is an in-neighbor of $x_2$. We may assume that $x_4$ is an in-neighbor of $x_2$. Suppose, to the contrary, that $x_1$ and $x_2$ are adjacent in $\NNN(D)$, then $x_3$ is a common out-neighbor of $x_1$ and $x_2$. If $(x_3,x_4) \in A(D)$ (resp.\ $(x_4,x_3) \in A(D)$), then $x_3$ (resp.\ $x_4$) is adjacent to $x_1$ in $\NNN(D)$. In either case, $\NNN(D)$ is connected and we reach a contradiction. Thus $x_1$ and $x_2$ are not adjacent in $\NNN(D)$ and so $N_D^-(x_2)=\{x_3,x_4\}$.

We denote $D_1$ the subdigraph of $D$ induced by $\{x_1, x_3, x_4\}$.
Since $N_D^+(x_1)=\{x_3,x_4\}$, $D_1$ is not a directed cycle.
Thus, by Lemma~\ref{lem:K3}, $\NNN(D_1)$ is connected, and so, by Lemma~\ref{lem:subgraph}, the subgraph of $\NNN(D)$ induced by $\{x_1, x_3, x_4\}$ is connected.
By applying a similar argument to the subdigraph induced by $\{x_2, x_3, x_4\}$, we may show that the subgraph of $\NNN(D)$ induced by $\{x_2, x_3, x_4\}$ is connected. Therefore $\NNN(D)$ is connected and  we reach a contradiction.
Thus $|N^+_D(x_1)|=2$ or $|N^+_D(x_2)|=2$ cannot happen. Then, by Lemma~\ref{lem:inverse}, the case $N_D^+(x_1) = \emptyset$ or $N_D^+(x_2) = \emptyset$ cannot happen.
Thus $|N_D^+(x_1)|=|N_D^+(x_2)|=1$.
If $N_D^+(x_1) = N_D^+(x_2)$, then $N_D^-(x_1) = N_D^-(x_2)$ and so $x_1$ and $x_2$ are true twins, which is a contradiction.
Therefore $N_D^+(x_1) \neq N_D^+(x_2)$.
Thus either $N_D^+(x_1)= \{x_3\}$ and $N_D^+(x_2)=\{x_4\}$ or $N_D^+(x_1)= \{x_4\}$ and $N_D^+(x_2)=\{x_3\}$.
By symmetry,  we may assume $N_D^+(x_1)= \{x_3\}$ and $N_D^+(x_2)=\{x_4\}$.
Since $x_3$ and $x_4$ belong to different partite sets in $D$, $(x_3, x_4) \in A(D)$ or $(x_4, x_3) \in A(D)$.
By symmetry again, we may assume that $(x_3, x_4) \in A(D)$.
Then $D$ is isomorphic to the digraph given in Figure~\ref{fig:K112 case}.
Hence the niche graph of $D$ is isomorphic to $P_3 \cup K_1$.
\end{proof}

\begin{figure}
\begin{center}

\begin{tikzpicture}[x=1.5cm, y=1.5cm]

    \vertex (x) at (0,0) [label=below:$x_1$]{};
    \vertex (y) at (0,1) [label=above:$x_2$]{};
    \vertex (z) at (1,0) [label=below:$x_3$]{};
    \vertex (w) at (1,1) [label=above:$x_4$]{};

    \path
    (x) edge [->,thick] (z)
    (z) edge [->,thick] (y)
    (y) edge [->,thick] (w)
    (w) edge [->,thick] (x)
    (z) edge [->,thick] (w)
	;
\end{tikzpicture}
\qquad \qquad \qquad
\begin{tikzpicture}[x=1.5cm, y=1.5cm]

    \vertex (x) at (0,0) [label=below:$x_1$]{};
    \vertex (y) at (0,1) [label=above:$x_2$]{};
    \vertex (z) at (1,0) [label=below:$x_3$]{};
    \vertex (w) at (1,1) [label=above:$x_4$]{};

    \path
    (y) edge [-,thick] (z)
    (y) edge [-,thick] (w)

	;
\end{tikzpicture}
\end{center}
\caption{An orientation of $K_{2,1,1}$ and its niche graph isomorphic to $ P_3 \cup K_1$}
\label{fig:K112 case}
\end{figure}
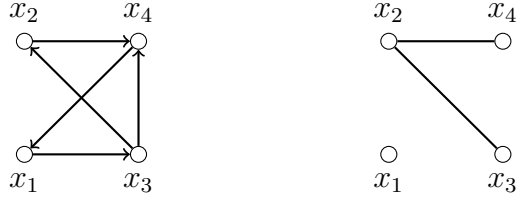

\begin{Lem}\label{lem:5vertices diconnected}
For positive integers $n_1, n_2$, and $n_3$ satisfying $n_1+n_2+n_3 \ge 5$,  suppose that an orientation $D$ of $K_{n_1, n_2, n_3}$ has no true twins.
Then the niche graph of $D$ is connected.
\end{Lem}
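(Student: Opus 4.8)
The plan is to prove connectedness directly, using the hypothesis $n_1+n_2+n_3\ge 5$ only at the very last step and the ``no true twins'' hypothesis repeatedly. Call a set of three vertices, one in each partite set, a \emph{transversal}; it induces an orientation of $K_3$, so by Lemma~\ref{lem:K3} its three vertices lie in a single component of $\NNN(D)$ (they induce $P_3$) unless the transversal is a directed $3$-cycle, in which case they are pairwise nonadjacent within the triple. For two vertices $x,y$ of one partite set, $N^+_D(x)\sqcup N^-_D(x)$ and $N^+_D(y)\sqcup N^-_D(y)$ both equal the union of the other two parts, so $x$ and $y$ are nonadjacent in $\NNN(D)$ exactly when $N^+_D(x)=N^-_D(y)$ and $N^-_D(x)=N^+_D(y)$; I will call such a pair \emph{reverses}.

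First I would dispose of the case in which every transversal is a directed $3$-cycle. Fixing the arc between a vertex of $X_1$ and a vertex of $X_2$ and testing it against every vertex of $X_3$, a short propagation forces $D$ to be the rotational tournament in which every arc runs from $X_1$ to $X_2$, from $X_2$ to $X_3$, or from $X_3$ to $X_1$. Then all vertices of a common part have identical out- and in-neighborhoods, so any part of size at least two yields true twins; since $n_1+n_2+n_3\ge 5$ forces some part to be large, this contradicts the hypothesis. Hence some transversal $T=\{a,b,c\}$ is transitive; fix it and let $C$ be the component of $\NNN(D)$ containing $T$. The goal is now to prove $C=V(D)$.

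Next I would spread $C$ across each part. Because there are no true twins, the reverse relation inside a part is a matching (two reverses of one vertex would be true twins), so $\NNN(D)$ restricted to a part is $K_{|X_i|}$ minus a matching, which is connected as soon as $|X_i|\ge 3$. Combining this with the transversal observation finishes most vertices: replacing a vertex $t_i\in T$ by its reverse $x$ reverses exactly the two arcs of $T$ at $t_i$, which keeps the triangle transitive when $t_i$ is the source or the sink of $T$ (so the transitive transversal $\{x,t_j,t_k\}$ drags $x$ into $C$ through $t_j,t_k$), and turns it into a directed cycle only when $t_i$ is the middle vertex. Thus every vertex lies in $C$ except possibly one: the reverse $v$ of the middle vertex of $T$, in the case where that vertex's part has exactly two elements.

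The main obstacle is this single leftover vertex $v$, and resolving it is precisely where $n_1+n_2+n_3\ge 5$ is needed. Since every vertex other than $v$ already lies in $C$, it suffices to show $v$ is not isolated in $\NNN(D)$. Suppose it is, and put $P=N^+_D(v)$ and $Q=N^-_D(v)$, so that $P$ and $Q$ partition the two parts other than the one containing $v$. Isolation forces $N^-_D(w)=\{v\}$ for each $w\in P$ and $N^+_D(w)=\{v\}$ for each $w\in Q$; that is, every vertex of $P$ beats all its neighbors except $v$, and every vertex of $Q$ loses to all its neighbors except $v$. Two vertices of $P$ in different parts would then each beat the other, which is impossible, so $P$ is contained in one part, and likewise $Q$; and two vertices of $P$ inside that one part would have identical neighborhoods, hence be true twins, so $P$, and symmetrically $Q$, is a singleton. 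This forces the two parts other than $v$'s to be singletons and gives $n_1+n_2+n_3=4$, a contradiction. Therefore $v$ is adjacent to a vertex of $C$, so $C=V(D)$ and $\NNN(D)$ is connected. The four-vertex configuration in Lemma~\ref{lem:4vertices diconnected} is exactly the excluded case, showing the bound $n_1+n_2+n_3\ge 5$ is sharp.
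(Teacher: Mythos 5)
Your proof is correct, and it takes a genuinely different route from the paper's. The paper proves the lemma by brute force on the base case $n_1+n_2+n_3=5$ (splitting into the shapes $(2,2,1)$ and $(3,1,1)$ and then into subcases on the out-degree of a distinguished vertex), and handles $n_1+n_2+n_3>5$ by placing any two vertices inside a five-vertex induced $3$-partite subtournament $D_6$ and citing the base case. Your argument is uniform in $n$: the key structural observations---that two vertices of a common part are nonadjacent in $\NNN(D)$ exactly when they are ``reverses'' ($N^+_D(x)=N^-_D(y)$ and $N^-_D(x)=N^+_D(y)$), that the no-true-twins hypothesis makes the reverse relation a matching (so each part induces a complete graph minus a matching, connected once the part has at least three vertices), and that if every transversal were a directed $3$-cycle the orientation would be rotational and any part of size at least two would contain true twins---have no counterpart in the paper, and they let you grow the component of a fixed transitive transversal until at most one vertex (the reverse of the middle vertex, living in a part of size exactly two) can remain, which your isolated-vertex count then kills using $n_1+n_2+n_3\ge 5$. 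Beyond being shorter and more conceptual, your route buys robustness: the paper's reduction step applies its base case to the induced subtournament $D_6$, yet the no-true-twins hypothesis is not automatically inherited by induced subdigraphs, and a five-vertex $3$-partite tournament \emph{with} true twins can indeed have a disconnected niche graph (orient all arcs $X_1\to X_2\to X_3\to X_1$ on parts of sizes $2$, $2$, $1$ to get $K_2\cup K_2\cup K_1$), so the paper's ``by the above argument'' silently requires a careful choice of $w_3$, $w_4$, $w_5$, whereas your global argument never passes to a subtournament and avoids the issue entirely. One cosmetic remark: the hypothesis $n_1+n_2+n_3\ge 5$ is in fact used twice in your proof, not only at the end---the all-cyclic-transversal case already needs a part of size at least two (though there $n_1+n_2+n_3\ge 4$ would suffice)---and in the final step you should note explicitly that the conclusion $n_1+n_2+n_3=4$ uses that $v$'s part has exactly two vertices, which holds because a part of size at least three is internally connected and would already have absorbed $v$; neither point affects correctness.
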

\begin{proof}
Without loss of generality, we may assume that $n_1 \ge n_2 \ge n_3$.
We first consider the case $n_1+n_2+n_3 = 5$. Then $n_1 =2$ or $3$. We will show that $\NNN(D)$ is connected in each of the following cases.

{\it Case} 1.  $n_1=2$.
Then $n_2=2$ and $n_3=1$.
Let $\{u_1, u_2\}$, $\{v_1, v_2\}$, and $\{w\}$ be the partite sets of $D$.
By Lemma~\ref{lem:inverse}, we may assume $d_D^+(w) \ge 2$.
Suppose $d_D^+(w) = 4$. Then $u_1$, $u_2$, $v_1$, and $v_2$ form a clique in $\NNN(D)$. If $(u_1,v_1) \in A(D)$ (resp.\ $(v_1,u_1)\in A(D)$),  then $v_1$ (resp.\ $u_1$) is a common out-neighbor of $u_1$ (resp.\ $v_1$) and $w$ and so $\NNN(D)$ is connected.

We consider the case $d_D^+(w) = 3$.
Then $N_D^+(w)=\{u_2,v_1,v_2\}$, $\{u_1,v_1,v_2\}$, $\{u_1,u_2,v_2\}$, or $\{u_1,u_2,v_1\}$.
By symmetry, we may assume that $N_D^+(w)=\{u_1, u_2, v_1\}$.
Then $N_D^-(w)=\{v_2\}$. Moreover, the subdigraphs $D_1$ and $D_2$ of $D$ induced by $\{w, u_1, v_1\}$ and by $\{w, u_2, v_1\}$, respectively, are orientations of $K_3$ which are not  directed cycles. Thus, by Lemma~\ref{lem:K3}, $\NNN(D_1)$ and $\NNN(D_2)$ are connected.
Since $D_1$ and $D_2$ are subdigraphs of $D$, by Lemma~\ref{lem:subgraph}, the subgraphs of $\NNN(D)$ induced by $\{w, u_1, v_1\}$ and by $\{w, u_2, v_1\}$ are connected respectively and so the subgraph of $\NNN(D)$ induced by $\{w,u_1,u_2,v_1\}$ is connected.
If $(v_2, u_1) \in A(D)$ or $(v_2, u_2) \in A(D)$, then $w$ and $v_2$ are adjacent in $\NNN(D)$  and we are done. Suppose that $(u_1, v_2) \in A(D)$ and $(u_2, v_2) \in A(D)$.
If $(v_1, u_1) \in A(D)$ and  $(v_1, u_2) \in A(D)$, then $N_D^+(u_1)=N_D^+(u_2)$ and $N_D^-(u_1)=N_D^-(u_2)$, which contradicts the hypothesis.
Therefore $(u_1, v_1) \in A(D)$ or $(u_2, v_1) \in A(D)$, and so $v_1$ is adjacent to $v_2$ in $\NNN(D)$.
Thus $\NNN(D)$ is connected.

We consider the case $d_D^+(w) = 2$.
Then one of the following is true:
\begin{itemize}	
\item $|N_D^+(w) \cap \{u_1, u_2\}|=1$ and $|N_D^+(w) \cap \{v_1, v_2\}|=1$;
\item $N_D^+(w) = \{u_1, u_2\}$ or $N_D^+(w) = \{v_1, v_2\}$.
\end{itemize}
We first suppose that  $|N_D^+(w) \cap \{u_1, u_2\}|=1$ and $|N_D^+(w) \cap \{v_1, v_2\}|=1$.
By symmetry, we may assume that $N_D^+(w)=\{u_1, v_1\}$.
Then $N_D^-(w)=\{u_2, v_2\}$.
Therefore the subdigraphs $D_3$ and $D_4$ of $D$ induced by $\{w, u_1, v_1\}$ and by $\{w, u_2, v_2\}$ are orientations of $K_3$ which are not directed cycles.
Then, by Lemma~\ref{lem:K3}, both $\NNN(D_3)$ and $\NNN(D_4)$ are connected.
Therefore, by Lemma~\ref{lem:subgraph}, the subgraphs of $\NNN(D)$ induced by $\{w, u_1, v_1\}$ and by $\{w, u_2, v_2\}$ are connected respectively.
Thus $\NNN(D)$ is connected.
Now suppose $N_D^+(w) = \{u_1, u_2\}$ or $N_D^+(w) = \{v_1, v_2\}$.
By symmetry, we may assume that $N_D^+(w) = \{u_1, u_2\}$.
Then $N_D^-(w) = \{v_1, v_2\}$.
Then $u_1$ and $u_2$ are adjacent and $v_1$ and $v_2$ are adjacent in $\NNN(D)$.
If $(u_j, v_i) \in A(D)$ for all $1 \le i, j \le 2$,
then $N_D^+(u_1) = N_D^+(u_2)$ and $N_D^-(u_1) = N_D^-(u_2)$, which contradicts the hypothesis.
Thus $(v_i, u_j) \in A(D)$ for some  $i$ and $j$ in $\{1,2\}$.
Then $u_j$ (resp.\ $v_i$) is a common out-neighbor (resp.\ common in-neighbor) of $v_i$ and $w$ (resp.\ $u_j$ and $w$) in $D$.
Thus each of $v_i$ and $u_j$ is adjacent to $w$ in $\NNN(D)$ and so $\NNN(D)$ is connected.
Hence we have shown that $\NNN(D)$ is connected if $n_1=2$.

{\it Case} 2. $n_1=3$.
Then $n_2=n_3=1$.
Let $\{x_1, x_2, x_3\}$, $\{y\}$, and $\{z\}$ be the partite sets of $D$.
We note that  $N_D^+(x_i) = N_D^+(x_j)$ if and only if $N_D^-(x_i) = N_D^-(x_j)$ for each $1 \le i < j \le 3$.
Therefore, by the hypothesis, $N_D^+(x_i) \neq N_D^+(x_j)$ for each $1 \le i < j \le 3$.
Then, since $N_D^+(x_i)$ is one of $\emptyset$, $\{y\}$, $\{z\}$, and $\{y, z\}$ for each $i=1$, $2$, and $3$, $d_D^+(x_i) = 1$ for some $i \in \{1,2,3\}$ and  $d_D^+(x_j) \neq 1$ for some $j \in \{1,2,3\}\setminus\{i\}$.
By symmetry, we may assume that $d_D^+(x_1)=1$ and $d_D^+(x_2) \in \{0, 2\}$.
In addition, by Lemma~\ref{lem:inverse}, we may assume that $d_D^+(x_2)=2$, i.e. $N_D^+(x_2)=\{y, z\}$.
Then $x_1$ and $x_2$ have a common out-neighbor in $D$, so $x_1$ and $x_2$ are adjacent in $\NNN(D)$.
On the other hand, since $y$ and $z$ belong to different partite sets, there is an arc between $y$ and $z$ and so the subdigraph $D_5$ of $D$ induced by $\{x_2, y, z\}$ is an orientation of $K_3$.
Since $N_D^+(x_2)=\{y, z\}$, $D_5$ is not a directed cycle, and so, by Lemma~\ref{lem:K3}, $\NNN(D_5)$ is connected.
Thus, by Lemma~\ref{lem:subgraph}, the subgraph of $\NNN(D)$ induced by $\{x_2, y, z\}$ is connected.
Since $x_1$ and $x_2$ are adjacent in $\NNN(D)$, the subgraph of $\NNN(D)$ induced by $\{x_1, x_2, y, z\}$ is connected.
We will show that $x_3$ is adjacent to a vertex in $\{x_1, x_2, y, z\}$ in $\NNN(D)$ to take care of this case.
If $x_3$ has an out-neighbor in $D$, then $x_2$ and $x_3$ are adjacent in $\NNN(D)$ and so we are done.
Suppose that $d_D^+(x_3)=0$.
Then the subdigraph of $D$ induced by $\{x_3,y,z\}$ is an orientation of $K_3$ which is not a directed cycle. By applying the same argument for $D_5$, we may show that  $\NNN(D)$ is connected.
Hence we have shown that $\NNN(D)$ is connected in the case $n_1+n_2+n_3=5$.

Now suppose that $n_1+n_2+n_3>5$.  To show that $\NNN(D)$ is connected, take two vertices $w_1$ and $w_2$ in $D$. Then we may take three vertices $w_3$, $w_4$, and $w_5$ in $D$ such that the induced subdigraph $D_6$ of $D$ induced by $\{w_1, w_2, w_3, w_4, w_5\}$ is a $3$-partite tournament.
By the above argument, $\NNN(D_6)$ is connected, so there is a $(w_1, w_2)$-path $P$ in $\NNN(D_6)$.
Since $D_6$ is a subdigraph of $D$, $\NNN(D_6)$ is a subgraph of $\NNN(D)$ by Lemma~\ref{lem:subgraph}.
Thus $P$ is a $(w_1, w_2)$-path in $\NNN(D)$ and hence $\NNN(D)$ is connected.
This completes the proof.
\end{proof}

For a graph $G$, a vertex $v$ of $G$, and a finite set $K$ disjoint from $V(G)$, we say that $v$ \emph{is replaced with a clique} formed by $K$ to obtain a new graph  with the vertex set $(V(G) \cup K) \setminus \{v\}$ and the edge set \[ E(G-v) \cup \{wx \mid w \neq x,  \{w,x\}\subset K\} \cup \{uw \mid uv \in E(G),  w\in K\}.\]
See Figure~\ref{fig:replace} for an illustration.
\begin{figure}
\begin{center}

\begin{tikzpicture}[x=1.5cm, y=1.5cm]

    \vertex (x1) at (0,0) [label=above:$$]{};
    \vertex (x2) at (0,1) [label=above:$$]{};
    \vertex (x3) at (1,0) [label=above:$$]{};
    \vertex (v) at (1,1) [label=above:$v$]{};
    \vertex (x5) at (1.6,1.6) [label=above:$$]{};

    \path
    (x1) edge [-,thick] (x2)
    (x1) edge [-,thick] (x3)
    (v) edge [-,thick] (x3)
    (v) edge [-,thick] (x2)
    (v) edge [-,thick] (x5)
	;

    \vertex (y1) at (5,0) [label=above:$$]{};
    \vertex (y2) at (5,1) [label=above:$$]{};
    \vertex (y3) at (6,0) [label=above:$$]{};
    \vertex (v1) at (6,1) [label=above:$$]{};
    \vertex (v2) at (6.2,0.8) [label=above:$$]{};
    \vertex (v3) at (5.8,1.2) [label=above:$$]{};
    \vertex (y5) at (6.6,1.6) [label=above:$$]{};

    \draw[dashed] (6,1) circle (0.4) [label=right:$K$]{};
    \draw (5.6,1.4) node{$K$};

    \path

    (v1) edge [-,thick] (v2)
    (v2) edge [-,bend right=40,thick] (v3)
    (v3) edge [-,thick] (v1)

    (y1) edge [-,thick] (y2)
    (y1) edge [-,thick] (y3)
    (v1) edge [-,thick] (y3)
    (v1) edge [-,thick] (y2)
    (v1) edge [-,thick] (y5)

    (y1) edge [-,thick] (y2)
    (y1) edge [-,thick] (y3)
    (v2) edge [-,thick] (y3)
    (v2) edge [-,thick] (y2)
    (v2) edge [-,thick] (y5)

    (y1) edge [-,thick] (y2)
    (y1) edge [-,thick] (y3)
    (v3) edge [-,thick] (y3)
    (v3) edge [-,thick] (y2)
    (v3) edge [-,thick] (y5)
	;

\end{tikzpicture}
\end{center}
\caption{The vertex $v$ of the graph on the left is replaced with a clique $K$ of size $3$ to yield the graph on the right.}
\label{fig:replace}
\end{figure}
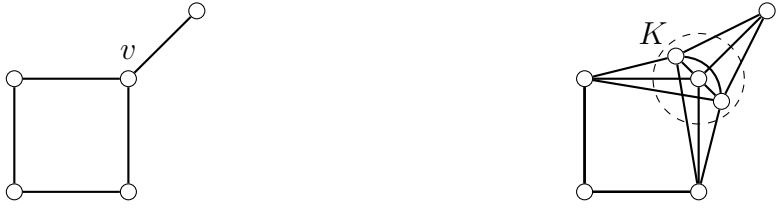
We call a graph an \emph{expansion} of a graph $G$ if it is obtained by replacing each vertex in $G$ with a clique (possibly of size $1$).

\begin{Thm}\label{thm:2components}
Let $G$ be a graph having exactly two components.
For $k \ge 3$, $(G, k)$ is niche-realizable if and only if $k=3$ and $G$ is isomorphic to an expansion of $P_3 \cup K_1$.
\end{Thm}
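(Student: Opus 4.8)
The plan is to prove the two directions using the tight correspondence between true twins in a digraph and cliques in its niche graph supplied by Lemma~\ref{lem:homo} and Proposition~\ref{prop:G-v-}; the whole theorem then reduces to analyzing digraphs with \emph{no} true twins, which the earlier lemmas already control.

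For the \emph{if} direction I would start from the $4$-vertex orientation $D_0$ of $K_{2,1,1}$ in Figure~\ref{fig:K112 case}, whose niche graph is $P_3\cup K_1$, and blow it up. Writing $P_3\cup K_1$ with center $b$, ends $a,c$, and isolated vertex $d$, I replace the vertex of $D_0$ corresponding to each of $a,b,c,d$ by a set of mutually true-twin copies, of the size prescribed by the target expansion, keeping all arcs parallel to those of $D_0$ and collecting the copies of the two vertices of the size-$2$ part of $D_0$ into a single partite set. A direct check shows the result $D$ is a $3$-partite tournament. Inside each copied set the vertices are true twins, hence a clique in $\NNN(D)$ by Lemma~\ref{lem:homo}; for copies in two different sets an arc-by-arc comparison shows they are adjacent in $\NNN(D)$ exactly when the corresponding vertices are adjacent in $\NNN(D_0)$. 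Thus $\NNN(D)$ is the prescribed expansion of $P_3\cup K_1$, giving niche-realizability of $(G,3)$.

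For the \emph{only if} direction, assume $(G,k)$ is niche-realizable with $G$ disconnected; Theorem~\ref{thm:K4 connected} forces $k=3$. Fix a $3$-partite tournament $D$ with $\NNN(D)=G$ and induct on $|V(D)|$ to show that $G$ is an expansion of $P_3\cup K_1$. The engine is true-twin deletion: if $D$ has true twins $u,v$, then $u$ and $v$ are true twins in $G$ by Lemma~\ref{lem:homo}, and deleting one vertex of a true-twin pair from a graph preserves the number of components, so $G-v$ still has exactly two components; by Proposition~\ref{prop:G-v-}, $D-v$ is again a $3$-partite tournament with $\NNN(D-v)=G-v$. Applying the inductive hypothesis to $D-v$ and then re-inserting $v$ as a true twin of $u$ only enlarges the clique of the expansion that contains $u$, so $G$ remains an expansion of $P_3\cup K_1$.

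This leaves the base case, in which $D$ has no true twins, and this is where the real work sits. By Lemma~\ref{lem:5vertices diconnected} a $3$-partite tournament with no true twins and at least five vertices has a connected niche graph, contradicting that $G$ has two components, so $|V(D)|\le 4$; and $|V(D)|=3$ is impossible because an orientation of $K_3$ has niche graph $I_3$ or $P_3$ by Lemma~\ref{lem:K3}, neither with two components. Hence $D$ is an orientation of $K_{2,1,1}$, and since its niche graph has exactly two components Lemma~\ref{lem:4vertices diconnected} gives $\NNN(D)\cong P_3\cup K_1$. The main obstacle is bookkeeping the induction so that (i) the two-component count is provably invariant under true-twin deletion and (ii) the deletion is only ever applied when $|V(D)|\ge 5$, which is guaranteed because a $4$-vertex orientation of $K_{2,1,1}$ with true twins has a connected or three-component niche graph by Lemma~\ref{lem:1-1}, never a two-component one; together with Lemma~\ref{lem:5vertices diconnected} this pins the induction down to the orientations of $K_{2,1,1}$ handled by Lemma~\ref{lem:4vertices diconnected}.
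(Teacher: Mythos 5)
Your proposal is correct and takes essentially the same approach as the paper: the same blow-up of the Figure~\ref{fig:K112 case} digraph into mutually true-twin copies for the ``if'' direction, and the same true-twin-deletion induction for the ``only if'' direction, invoking Theorem~\ref{thm:K4 connected}, Proposition~\ref{prop:G-v-}, and Lemmas~\ref{lem:homo}, \ref{lem:K3}, \ref{lem:1-1}, \ref{lem:4vertices diconnected}, and~\ref{lem:5vertices diconnected} exactly where the paper does. The only cosmetic difference is that you organize the induction around whether $D$ has true twins rather than on $|V(G)|$ with base case $|V(G)|=4$, which amounts to the identical argument.
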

\begin{proof}
To show the ``if" part,
suppose that $G$ is isomorphic to an expansion of $P_3 \cup K_1$.
We will show that $(G,3)$ is niche-realizable.
Let $D$ be the digraph given in Figure~\ref{fig:K112 case}.
Then $\NNN(D)$ is isomorphic to $P_3 \cup K_1$.
Let $X_i$ be the set of vertices of $G$ which are true twins to the vertex corresponding to $x_i$ in $\NNN(D)$ for each $1\leq i\leq 4$.
We construct a digraph $D^*$ from $D$ in the following way:
\[
V(D^*)=V(G);
\]
\[
A(D^*)=\{(v,w) \mid v \in X_i, w \in X_j, (i,j) \in \{ (1,3),(2,4),(3,2),(3,3),(4,1)\} \}.
\]
Then $D^*$ is a $3$-partite tournament, and
\begin{itemize}
  \item $N_{D^*}^+(u_1)=X_3$, $N_{D^*}^-(u_1)=X_4$;
  \item $N_{D^*}^+(u_2)=X_4$, $N_{D^*}^-(u_2)=X_3$;
  \item $N_{D^*}^+(u_3)=X_2 \cup X_4$, $N_{D^*}^-(u_3)=X_1$;
  \item $N_{D^*}^+(u_4)=X_1$, $N_{D^*}^-(u_4)=X_2 \cup X_3$
\end{itemize}
for each vertex $u_i \in X_i$; for each $1\leq i \leq 4$.
Thus $X_i$ forms a clique in $\NNN(D^*)$ for each $1 \leq i \leq 4$.
Take $v$ and $w$ in $G$.
We first consider the case in which $v$ and $w$ are adjacent in $G$.
Then $v$ and $w$ belong to $X_i$ for some $i \in \{1,2,3,4\}$ or exactly one of $v$ and $w$ belongs to $X_2$ and the other one belongs to $X_3 \cup X_4$.
If the former is true, then $v$ and $w$ are adjacent in $\NNN(D^*)$ by above observation.
Suppose the latter.
Then, without loss of generality, we may assume that $v$ belongs to $X_2$ and $w$ belongs to $X_3 \cup X_4$.
If $w$ belongs to $X_3$ (resp.\ $X_4$), then $v$ and $w$ have a common out-neighbor (resp.\ common in-neighbor) in $D^*$ by the above observation, and so they are adjacent in $\NNN(D^*)$.

Now we consider the case where $v$ and $w$ are not adjacent in $G$.
Then, without loss of generality, we may assume that $v$ belongs to $X_1$ and $w$ does not belong to $X_1$ or $v$ and $w$ belong to $X_3$ and $X_4$, respectively.
If the former is true, $N_{D^*}^+(v) = X_3$, $N_{D^*}^-(v)=X_4$, $N_{D^*}^+(w) \subset X_1 \cup X_2 \cup X_4$, and $N_{D^*}^-(w) \subset X_1 \cup X_2 \cup X_3$ by the above observation, and so $v$ and $w$ are not adjacent in $\NNN(D^*)$.
If the latter is true, $N_{D^*}^+(v) = X_2 \cup X_4$, $N_{D^*}^-(v)=X_1$, $N_{D^*}^+(w) = X_1$, and $N_{D^*}^-(w) = X_2 \cup X_3$ by the above observation, and so $v$ and $w$ are not adjacent in $\NNN(D^*)$.
Hence we have shown that $G$ is isomorphic to $\NNN(D^*)$.

To show the ``only if" part,
suppose that $(G,k)$ is a niche-realizable.
Let $D$ be a $k$-partite tournament whose niche graph is $G$.
Since $G$ is not connected, $k < 4$ by Theorem~\ref{thm:K4 connected} and so $k =3$.
Thus $D$ is an orientation of $K_{n_1,n_2,n_3}$ for positive integers $n_1$, $n_2$, and $n_3$.
If $|V(G)|=3$, then $D$ is an orientation of $K_3$ and so, by Lemma~\ref{lem:K3}, $G$ is connected  or has three components, which contradicts the hypothesis that $G$ has exactly two components.
Therefore $|V(G)| \geq 4$.
In the following, we show that $G$ is isomorphic to an expansion of $P_3 \cup K_1$ by induction on $|V(G)|$.
First we consider the case where $|V(G)|=4$.
Then $D$ is an orientation of $K_{2,1,1}$.
If $D$ has true twins, then $G$ is connected or has three components by Lemma~\ref{lem:1-1}, which is a contradiction.
Therefore $D$ has no true twins, so $G \cong P_3 \cup K_1$ by Lemma~\ref{lem:4vertices diconnected}.
Thus the basis step is true.

We assume that the statement is true for any niche-realizable graph on $l$ vertices which has exactly two components for a positive integer $l \geq 4$.
Now we assume $|V(G)|=l+1$.
Then $n_1+n_2+n_3=l+1 \geq 5$.
Since $G$ is not connected, $D$ has true twins by Lemma~\ref{lem:5vertices diconnected}.
Let $u$ and $v$ be true twins
in $D$.
Then $D-v$ is a $3$-partite tournament and $G-v$ is the niche graph of $D-v$ by Proposition~\ref{prop:G-v-}.
On the other hand, $u$ and $v$ are true twins in $G$ by Lemma~\ref{lem:homo}.
Then, $G$, $G-u$, and $G-v$ have the same number of components.
Since $G$ has two components by the hypothesis, $G-v$ has exactly two components.
Therefore, by the induction hypothesis,
$G-v$ is an expansion of $P_3 \cup K_1$.
Since $v$ and $u$ are true twins in $G$,
$G$ is an expansion of $P_3 \cup K_1$.
\end{proof}

\section{Niche-realizable pairs $(G,k)$ when $G$ is connected}
In this section, we study the niche graphs of $k$-partite tournaments for $k \ge 3$ which are connected.
We first find all the niche-realizable pairs $(K_n, k)$ for positive integers $n \ge k \ge 3$.

\begin{Thm}\label{thm:complete}
For positive integers $n \ge k \ge 3$, $(K_n, k)$ is niche-realizable if and only if $(n, k) \in \{(4, 4)\} \cup \{(n, k) \mid n \ge 5\}$.
\end{Thm}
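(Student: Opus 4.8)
The plan is to prove the two directions separately: dispose of the two small non-realizable pairs by local arguments, and handle all realizable pairs by exhibiting a single base digraph for each $k$ and then blowing up vertices into true twins.

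For the ``only if'' direction, since $n \ge k \ge 3$, the only pairs with $n \le 4$ other than $(4,4)$ are $(3,3)$ and $(4,3)$, so it suffices to rule these out. The pair $(K_3,3)$ is immediate: a $3$-partite tournament on $3$ vertices is an orientation of $K_3$, whose niche graph is $I_3$ or $P_3$ by Lemma~\ref{lem:K3}, never $K_3$. For $(K_4,3)$ the digraph $D$ is an orientation of $K_{2,1,1}$ with parts $\{x_1,x_2\}$, $\{x_3\}$, $\{x_4\}$, and I would bound $|E(\NNN(D))|$ by noting that both endpoints of every edge lie in a common $N^+_D(w)$ or $N^-_D(w)$, whence $|E(\NNN(D))| \le \sum_w\bigl(\binom{d^+_D(w)}{2}+\binom{d^-_D(w)}{2}\bigr)$. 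Using that $d^+_D(x_i)+d^-_D(x_i)$ equals the underlying degree ($2$ for $x_1,x_2$ and $3$ for $x_3,x_4$), each of $x_1,x_2$ contributes at most $1$ and each of $x_3,x_4$ at most $3$, the value $3$ being attained only at out- or in-degree $3$. Since $K_4$ has $6$ edges, attaining the bound forces (passing to the converse via Lemma~\ref{lem:inverse} if needed) one of $x_3,x_4$, say $x_3$, to dominate the other three vertices, and then forces both $x_1,x_2$ to have in-degree $2$; a direct check shows this configuration produces only the four edges $x_1x_2, x_1x_4, x_2x_4, x_3x_4$, omitting $x_1x_3$ and $x_2x_3$. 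As every other degree distribution yields fewer than $6$ edges, $\NNN(D)\neq K_4$.

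For the ``if'' direction, I would first produce one base realization for each parameter. For $k \ge 4$, let $T_k$ be the tournament on $k$ vertices formed by a vertex $s$ dominating all others together with a sink-free tournament (for instance a cyclic one) on the remaining $k-1 \ge 3$ vertices. Then the out-neighbourhood of $s$ is a clique in $\NNN(T_k)$ through the common in-neighbour $s$, and each edge $sw$ is present because $w$, having positive out-degree, shares an out-neighbour with $s$; hence $\NNN(T_k)=K_k$, and $T_k$ is a $k$-partite tournament (all parts singletons), so $(K_4,4)$ is realized by $T_4$. For $k=3$ I would take the orientation $D_0$ of $K_{2,2,1}$ with parts $\{a_1,a_2\}$, $\{b_1,b_2\}$, $\{c\}$ in which $c$ dominates the other four vertices while the four arcs between the $a$'s and the $b$'s form the $4$-cycle $a_1 \to b_1 \to a_2 \to b_2 \to a_1$; a short computation then gives $\NNN(D_0)=K_5$.

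To reach all admissible $n$, I would use that $K_n$ is an expansion of $K_m$ for every $m \le n$, together with the fact that replacing a vertex of the base digraph by a set of true twins in the same partite set turns that vertex into a clique in the niche graph while leaving the number of partite sets unchanged (Lemmas~\ref{lem:homo} and~\ref{lem:homo-same-partite}, exactly as in the ``if'' part of Theorem~\ref{thm:2components}). Blowing up a vertex of $T_k$ realizes $(K_n,k)$ for all $n \ge k \ge 4$, and blowing up a vertex of $D_0$ realizes $(K_n,3)$ for all $n \ge 5$; combined with $T_4$ this covers precisely $\{(4,4)\}\cup\{(n,k):n\ge 5\}$, the complement of the excluded set $\{(3,3),(4,3)\}$. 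The step I expect to be the main obstacle is the impossibility of $(K_4,3)$: the edge-counting bound alone permits the extremal split $1+1+3+1=6$, so one must carry out the forced-configuration analysis to exclude it rather than stopping at the strict inequality. A secondary difficulty is finding the base $3$-partite tournament on the minimum of $5$ vertices, where no source is available inside a part of size $2$; once found, verifying these constructions is routine.
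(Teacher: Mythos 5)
Your overall architecture is sound and genuinely different from the paper's in both directions. For the ``if'' part the paper writes down, for each admissible pair $(n,k)$, an explicit $k$-partite tournament with a dominating vertex $v_1$ (so that the remaining $n-1$ vertices form a clique through $v_1$, and each edge $v_1v_i$ comes from an out-neighbor of $v_i$), whereas you realize only minimal cases ($T_k$ for $n=k\ge 4$, and $D_0$ on $K_{2,2,1}$ for $(5,3)$) and then blow a vertex up into true twins. That blow-up step is legitimately supported by the paper's own machinery: Lemma~\ref{lem:homo} makes the copies pairwise adjacent with equal closed neighborhoods, and iterating Proposition~\ref{prop:G-v-} (deleting the extra twins one at a time) identifies the niche graph of the blow-up with the expansion of the niche graph of the base, so $\NNN$ of the blow-up is $K_n$; your two base digraphs do have niche graphs $K_k$ and $K_5$ as claimed. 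For the ``only if'' part the paper argues directly on an orientation of $K_{1,1,2}$: a common out-neighbor $y$ of the size-two part is forced to be a sink and $x$ a source, so $xy \notin \NNN(D)$. Your counting bound $|E(\NNN(D))| \le \sum_w \bigl(\tbinom{d^+_D(w)}{2} + \tbinom{d^-_D(w)}{2}\bigr)$ is a valid alternative route.

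However, your extremal analysis for $(K_4,3)$ has a concrete case omission. From the contribution caps ($\le 1$ for each of $x_1,x_2$, $\le 3$ for each of $x_3,x_4$, with $3$ only at out- or in-degree $3$) and the requirement that the sum be at least $6$, one gets only that the $x_3$- and $x_4$-contributions sum to at least $4$, hence (after passing to the converse and swapping) WLOG $N^+_D(x_3)=\{x_1,x_2,x_4\}$. It is \emph{not} forced that $x_1$ and $x_2$ both have in-degree $2$: the split $3+3+0+0$ also attains $6$, namely $d^-_D(x_4)=3$, i.e.\ $N^-_D(x_4)=\{x_1,x_2,x_3\}$, with $x_1,x_2$ contributing $0$. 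Your closing claim that ``every other degree distribution yields fewer than 6 edges'' is therefore false as a statement about the bound, and this second configuration must be refuted separately. It does fail --- the two triples $N^+_D(x_3)$ and $N^-_D(x_4)$ cover only five distinct edges because $x_1x_2$ is double-counted, and the missing edge $x_3x_4$ cannot be covered since each of $x_1,x_2$ has $x_3$ as in-neighbor and $x_4$ as out-neighbor --- so the theorem's exclusion survives, but as written your treatment of the step you yourself flagged as the main obstacle is incomplete; add this configuration to the direct check.
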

\begin{proof}
To show the ``if'' part, we construct a digraph $D$ in the following way.
Let $V(D)= \{v_1, v_2, \ldots, v_n\}$.
If $k=3$ and $n \ge 5$, then let $D$
be any $3$-partite tournament with partite sets $\{v_1\}$, $\{v_2, v_3\}$, and $\{v_4, v_5, \ldots, v_n\}$ whose arc set includes the following arc set (the remaining arcs have an arbitrary orientation):
\begin{align*}
 \{(v_1, v_i) \mid 2 \le i \le n\} \cup \{(v_2, v_4), (v_4, v_3), (v_3, v_5), (v_5, v_2) \}
     \cup \{(v_i, v_2) \mid 6 \le i \le n \}.
\end{align*}
If $k \ge 4$ and $n \ge 4$, then let $D$ be any  $k$-partite tournament with partite sets $\{v_1\}$, $\{v_2\}$, $\ldots$, $\{v_{k-1}\}$, $\{v_k, v_{k+1}, \ldots, v_n\}$ whose arc set includes the following arc set (the remaining arcs have an arbitrary orientation):
\[
\{(v_1, v_i) \mid 2 \le i \le n\} \cup
\bigcup_{i=2}^{k-2} \left\{(v_i, v_{i+1})\right \}  \cup
\bigcup_{i=k}^{n} \left\{(v_{k-1}, v_i), (v_i, v_2) \right\}.
\]
In both cases, $v_1$ is a common in-neighbor of the remaining vertices, so $\{v_2, v_3, \ldots, v_n\}$ forms a clique in $\NNN(D)$.
Moreover, since $v_i$ has at least one out-neighbor in $\{v_2, v_3, \ldots, v_n\}$ for each $2 \leq i \leq n$, $v_1$ and $v_i$ have a common out-neighbor in $D$, and so they are adjacent in $\NNN(D)$.
Therefore $\NNN(D)$ is a complete graph with $n$ vertices.

Now we show the ``only if'' part.
By Lemma~\ref{lem:K3}, $(K_3, 3)$ is not niche-realizable.
We only need to show that $(K_4, 3)$ is not niche-realizable.
Suppose, to the contrary, that $(K_4, 3)$ is niche-realizable.
Then there is an orientation $D$ of $K_{1,1,2}$ such that $\NNN(D)$ is isomorphic to $K_4$.
Let $\{x\}$, $\{y\}$, and $\{z, w\}$ be the partite sets of $D$.
Since $\NNN(D) \cong K_4$, $z$ and $w$ are adjacent in $\NNN(D)$, and so have a common out-neighbor or a common in-neighbor in $D$.
By Lemma~\ref{lem:inverse}, we may assume that they have a common out-neighbor and, by symmetry, we may assume that $y$ is a common out-neighbor of $z$ and $w$.
Then, since $x$ and $z$ are adjacent in $\NNN(D)$, $(x, y) \in A(D)$.
Thus $N_D^-(y)=\{x,z,w\}$.
\begin{figure}
\begin{center}

\begin{tikzpicture}[x=1.8cm, y=1.8cm]

    \vertex (x) at (0,1) [label=above:$x$]{};
    \vertex (y) at (1,1) [label=above:$y$]{};
    \vertex (z) at (0,0) [label=below:$z$]{};
    \vertex (w) at (1,0) [label=below:$w$]{};
    \draw[dashed] (0,1.1) circle (0.3);
    \draw[dashed] (1,1.1) circle (0.3);
    \draw[dashed] (0.5,-0.1) ellipse (1.4cm and 0.5cm);

    \path
    (x) edge [->,thick] (y)
    (z) edge [->,thick] (y)
    (w) edge [->,thick] (y)
	;
\end{tikzpicture}
\end{center}
\caption{A subdigraph of $D$}
\label{fig:complete proof}
\end{figure}
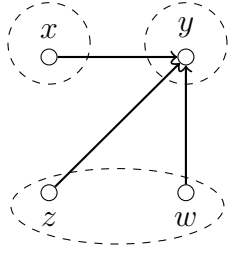
On the other hand, since $y$ and $z$ (resp.\ $w$) are adjacent in $\NNN(D)$, they have a common out-neighbor or a common in-neighbor in $D$.
Yet, $y$ has no out-neighbor in $D$, so $y$ and $z$ (resp.\ $w$) have a common in-neighbor that must be $x$ (see Figure~\ref{fig:complete proof}).
Then $A(D)=\{(x, y), (x, z), (x, w), (z, y), (w, y)\}$.
Since $x$ has only out-neighbors and $y$ has only in-neighbors, they are not adjacent in $\NNN(D)$, which is a contradiction to the supposition that $\NNN(D) \cong K_4$.
Hence the ``only if'' part is true.
\end{proof}

The rest of this paper will be devoted to
finding all the niche-realizable pairs $(G,k)$ when $G$ is connected triangle-free.

\begin{Lem}\label{lem:equivalence contains a triangle}
Let $D$ be a digraph with at least three vertices whose niche graph $\NNN(D)$ is connected.
If there are two distinct vertices which are true twins in $D$, then $\NNN(D)$ contains a triangle.
\end{Lem}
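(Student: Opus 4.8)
The plan is to first promote the true-twin relation from $D$ to $\NNN(D)$, and then exploit connectivity to produce a common neighbor that closes a triangle.

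First I would observe that the hypotheses force $D$ to have no isolated vertices: since $\NNN(D)$ is connected on at least three vertices, every vertex has positive degree in $\NNN(D)$, so no vertex $w$ of $D$ can satisfy $N^+_D(w) = N^-_D(w) = \emptyset$. With this in hand, Lemma~\ref{lem:homo} applies to the true twins $u$ and $v$, yielding that $u$ and $v$ are true twins in $\NNN(D)$; in particular they have the same closed neighborhood $N_{\NNN(D)}[u] = N_{\NNN(D)}[v]$ in $\NNN(D)$, and consequently $u$ and $v$ are adjacent in $\NNN(D)$.

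Next I would locate a third vertex of the triangle. Since $\NNN(D)$ is connected and has at least three vertices, the edge $uv$ cannot constitute an entire connected component, so at least one of $u$ and $v$---say $u$---has a neighbor $w$ in $\NNN(D)$ with $w \neq v$. Because $u$ and $v$ have the same closed neighborhood in $\NNN(D)$, the vertex $w$ is adjacent to $v$ as well. Together with the edge $uv$, this shows that $\{u, v, w\}$ induces a triangle in $\NNN(D)$, which completes the argument.

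I do not anticipate a serious obstacle here; the only points requiring care are verifying the no-isolated-vertex condition so that Lemma~\ref{lem:homo} may legitimately be invoked, and then arguing that connectivity together with $|V(D)| \ge 3$ genuinely supplies a neighbor of $u$ distinct from $v$ (rather than leaving $\{u,v\}$ as an isolated $K_2$).
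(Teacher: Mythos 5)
Your proof is correct and follows essentially the same route as the paper's: both invoke Lemma~\ref{lem:homo} to transfer the true-twin relation to $\NNN(D)$ (after noting connectivity rules out isolated vertices in $D$) and use connectivity with $|V(D)|\ge 3$ to find a third vertex $w$ adjacent to one twin, hence to both. Your write-up is if anything slightly more careful, since you explicitly justify the no-isolated-vertex condition and the adjacency of $u$ and $v$, which the paper leaves implicit in its appeal to closed neighborhoods.
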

\begin{proof}
Suppose that $u$ and $v$ are distinct vertices which are true twins in $D$.
Since $\NNN(D)$ is connected and has at least three vertices, $D$ contains a vertex $w$ other than $u$ and $v$ that is adjacent to $u$ or $v$ in $\NNN(D)$.
Without loss of generality, we may assume that $w$ is adjacent to $v$ in $\NNN(D)$.
Since $\NNN(D)$ is connected, $D$ has no isolated vertices.
Then $u$ and $v$ are true twins in $\NNN(D)$ by Lemma~\ref{lem:homo}.
Thus $\{u, v, w\}$ forms a triangle in $\NNN(D)$.
\end{proof}

We make the following rather obvious observation.

\begin{Lem}\label{lem:in and out}
Let $D$ be a $k$-partite tournament for $k \ge 3$.
Then, for each partite set $X$ and each $x \in X$, $N_D^+(x) \cup N_D^-(x) = V(D) \setminus X$.
\end{Lem}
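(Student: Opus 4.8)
The plan is to prove this directly from the definitions, since the statement is essentially the defining property of a multipartite tournament. Recall that in a $k$-partite tournament $D$ with partite sets $X_1, \ldots, X_k$, every pair of vertices lying in \emph{different} partite sets is joined by exactly one arc, while no two vertices in the \emph{same} partite set are joined by any arc. I would fix a partite set $X$ and a vertex $x \in X$, and argue by double inclusion that $N_D^+(x) \cup N_D^-(x) = V(D) \setminus X$.

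First I would establish the inclusion $N_D^+(x) \cup N_D^-(x) \subseteq V(D) \setminus X$. The open out-neighborhood and open in-neighborhood of $x$ consist of vertices joined to $x$ by an arc; since $D$ has no arcs within a partite set, any such vertex must lie outside $X$, and of course it cannot be $x$ itself. Hence $N_D^+(x) \cup N_D^-(x) \subseteq V(D) \setminus X$.

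For the reverse inclusion $V(D) \setminus X \subseteq N_D^+(x) \cup N_D^-(x)$, take any $y \in V(D) \setminus X$. Then $y$ lies in a partite set $X_j \neq X$, so $x$ and $y$ belong to distinct partite sets. By the definition of a multipartite tournament, exactly one of $(x,y)$ and $(y,x)$ is an arc of $D$; in the first case $y \in N_D^+(x)$, and in the second case $y \in N_D^-(x)$. Either way $y \in N_D^+(x) \cup N_D^-(x)$, which gives the desired inclusion. Combining the two inclusions yields $N_D^+(x) \cup N_D^-(x) = V(D) \setminus X$.

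There is no real obstacle here: the lemma is, as the authors say, a rather obvious observation that merely repackages the definition of a multipartite tournament. The only point requiring the slightest care is to keep the two defining conditions straight — that no arc joins same-part vertices (used for the forward inclusion) and that exactly one arc joins different-part vertices (used for the reverse inclusion) — and to note that the hypothesis $k \ge 3$ is not actually needed for the statement itself, although it is the natural setting in which the lemma will be applied.
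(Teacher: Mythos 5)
Your proof is correct and matches the paper's intent: the paper states this lemma without proof as a ``rather obvious observation,'' and your double-inclusion argument simply makes explicit the definitional unpacking the authors had in mind. Your side remark that the hypothesis $k \ge 3$ is not needed for the statement itself is also accurate.
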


\begin{Thm}\label{thm:diameter}
Let $D$ be a $k$-partite tournament for $k \ge 3$.
Then $\NNN(D)$ contains no induced path of length $5$, that is, $\NNN(D)$ is $P_6$-free.
\end{Thm}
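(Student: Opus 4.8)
Suppose for contradiction that $\NNN(D)$ contains an induced path $v_1v_2v_3v_4v_5v_6$, and write $P=\{v_1,\dots,v_6\}$. The engine of the whole argument is the observation already used in the proof of Theorem~\ref{thm:stability number}: for every vertex $x$ of $D$, each of $N_D^+(x)$ and $N_D^-(x)$ induces a clique in $\NNN(D)$. Since a clique meets an induced $P_6$ in at most two vertices, and any two adjacent vertices of the path are consecutive, each of $N_D^+(x)\cap P$ and $N_D^-(x)\cap P$ consists of at most two vertices that are consecutive on the path. Combined with Lemma~\ref{lem:in and out}, this shows that at most four vertices of $P$ lie outside the partite set containing $x$; equivalently, at least two vertices of $P$ share that partite set.

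Applying this with $x=v_i$ shows that the partition of $P$ induced by the partite sets of $D$ has every part of size at least two, so its type is one of $(6)$, $(4,2)$, $(3,3)$, $(2,2,2)$. Moreover, for a part $C$ and a vertex $v_i\in C$, the set $P\setminus C$ is the disjoint union $(N_D^+(v_i)\cap P)\sqcup(N_D^-(v_i)\cap P)$ of two consecutive cliques of size at most two; a short enumeration of the ways $\{v_1,\dots,v_6\}$ can be split in this manner shows that the only partition of type $(2,2,2)$ that survives is $\{\{v_1,v_2\},\{v_3,v_4\},\{v_5,v_6\}\}$. The next step is to \emph{localise the witnesses}. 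Each edge $v_iv_{i+1}$ admits a witness, i.e.\ a vertex $w$ with $\{v_i,v_{i+1}\}\subseteq N_D^+(w)$ or $\{v_i,v_{i+1}\}\subseteq N_D^-(w)$. If $w$ lay in a partite set disjoint from $P$, then all six vertices of $P$ would lie outside that set, contradicting the bound of four above; hence every witness lies in the partite set of one of the parts. If $w$ belongs to the partite set of a part $C$, then its two witnessed vertices, being out- or in-neighbours of $w$, avoid $C$, so the witnessed edge has both endpoints in $P\setminus C$.

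This immediately disposes of three of the four types. For type $(6)$ the set $P\setminus C$ is empty, so no edge can be witnessed, a contradiction. For types $(4,2)$ and $(3,3)$ every edge lies inside one of the two parts, so no edge of the path crosses between them; this contradicts the connectedness of $P_6$.

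The remaining, and by far the most delicate, case is the partition $\{\{v_1,v_2\},\{v_3,v_4\},\{v_5,v_6\}\}$, which is where I expect the main difficulty to lie. Here the consecutive-clique constraint forces, for each $v_i$, the sets $N_D^+(v_i)\cap P$ and $N_D^-(v_i)\cap P$ to be exactly the two parts other than the one containing $v_i$; consequently the arcs between any two parts all point the same way, so the three parts behave like the three vertices of a tournament $T$ on three super-vertices. If $T$ is transitive, the part dominated by the other two contains a vertex whose in-neighbourhood includes the four vertices of those two parts, producing a clique of size four in $\NNN(D)$ and contradicting the consecutive-clique bound. If $T$ is a directed triangle, then the crossing edge $v_2v_3$ can be witnessed only from the partite set containing $\{v_5,v_6\}$, yet in a directed triangle every vertex of that set has $v_2$ and $v_3$ on opposite sides of its out/in split, so $v_2v_3$ has no witness and cannot be an edge. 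Either way we reach a contradiction, completing the proof that $\NNN(D)$ is $P_6$-free.
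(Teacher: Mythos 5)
Your proof is correct, but it takes a genuinely different route from the paper's. The paper splits on whether $\NNN(D)$ is connected: the disconnected case is settled by its classification of disconnected niche graphs (Corollary~\ref{cor:the number of components}, Theorems~\ref{thm:3components} and~\ref{thm:2components}); in the connected case it shows each partite set meets the path in exactly two vertices (forcing $k=3$), proves by a careful connectivity argument that the subdigraph $D_1$ induced by $V(P)$ satisfies $\NNN(D_1)=P$ exactly, then deletes $x_2$ and derives the contradiction from the true-twin machinery: $D_1-x_2$ can have no true twins (else Lemma~\ref{lem:equivalence contains a triangle} would put a triangle in $P$), so Lemma~\ref{lem:5vertices diconnected} makes its niche graph connected, while $P-x_2$ is disconnected. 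You bypass the dichotomy and that machinery entirely with two local observations --- $N_D^+(x)\cap V(P)$ and $N_D^-(x)\cap V(P)$ are cliques, hence at most two consecutive path vertices, and by Lemma~\ref{lem:in and out} they partition the portion of $V(P)$ outside $x$'s partite set --- then run a finite case analysis on the induced partition of $V(P)$, localize witnesses, and kill the unique surviving type-$(2,2,2)$ partition $\{v_1,v_2\},\{v_3,v_4\},\{v_5,v_6\}$ by an orientation argument; I checked the enumeration of admissible parts and it is right. What your approach buys is self-containedness (no dependence on Section~3 or the twin lemmas) and a uniform treatment of connected and disconnected $\NNN(D)$; what the paper's buys is reuse of lemmas it needs anyway and less case enumeration. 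Two steps in your write-up deserve one explicit line each, though both are routine given your machinery: (i) uniform orientation between two parts does not follow formally from the per-vertex statement alone --- it follows because if, say, $(v_1,v_3)$ and $(v_3,v_2)$ were both arcs, then $N_D^+(v_3)\cap V(P)$, containing $v_2$, would have to be the whole part $\{v_1,v_2\}$, a contradiction; and (ii) in the directed-triangle case a witness $w$ for $v_2v_3$ lying in the partite set containing $\{v_5,v_6\}$ need not belong to $P$, and for such $w$ the reason $v_2$ and $v_3$ fall on opposite sides is not the triangle orientation but your forced-split enumeration: $N_D^+(w)\cap V(P)$ and $N_D^-(w)\cap V(P)$ must partition $\{v_1,v_2,v_3,v_4\}$ into two path edges, which forces the split $\{v_1,v_2\}\sqcup\{v_3,v_4\}$. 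That last observation incidentally makes $v_2v_3$ unwitnessable regardless of how the three parts are oriented, so the transitive/cyclic case distinction at the end could be dropped altogether.
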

\begin{proof}
We denote the partite sets of $D$ by $X_1$, $\ldots$, $X_{k-1}$, and $X_k$.
If $\NNN(D)$ is disconnected, it contains no induced path of length $5$ by Corollary~\ref{cor:the number of components} and Theorems~\ref{thm:3components} and~\ref{thm:2components}.

Suppose that $\NNN(D)$ is connected.
To reach a contradiction, suppose that $\NNN(D)$ contains an induced path $P$ of length $5$.
Let $P=x_1x_2x_3x_4x_5x_6$.
Suppose that $|X_i \cap V(P)| \le 1$ for some $i \in [k]$.
Without loss of generality, we may assume that $|X_1 \cap V(P)| \le 1$.
Take a vertex $x \in X_1$.
Then $N_D^+(x) \cup N_D^-(x)$ contains at least five vertices in $V(P)$ by Lemma~\ref{lem:in and out}.
Therefore $N_D^+(x)$ or $N_D^-(x)$ contains at least three vertices in $V(P)$.
Since each of $N_D^+(x)$ and $N_D^-(x)$ forms a clique in $\NNN(D)$, the subgraph of $\NNN(D)$ induced by $V(P)$ contains a triangle, which contradicts the choice of $P$ as an induced path of $\NNN(D)$.
Thus $|X_i \cap V(P)| \ge 2$ for each $1 \leq i \leq k$.
Since $|V(P)|=6$, $k \ge 3$, and $X_1$, $\ldots$, $X_k$ are mutually disjoint, we obtain $k=3$ and
\begin{equation}\label{eqn:X_i cap V(P)}
|X_i \cap V(P)|=2
\end{equation}
for each $i=1$, $2$, and $3$.
Now let $D_1$ be the subdigraph of $D$ induced by $V(P)$.
Then $D_1$ is a $3$-partite tournament.
By Lemma~\ref{lem:subgraph}, $\NNN(D_1)$ is a subgraph of $P$.
Thus $\NNN(D_1)$ is triangle-free and so, by Lemma~\ref{lem:in-out-degree-condition}, $d_{D_1}^+(x) \le 2$ and $d_{D_1}^-(x) \le 2$ for all $x \in V(D_1)$.
By \eqref{eqn:X_i cap V(P)}, $d_{D_1}^+(x)+d_{D_1}^-(x)=4$,  so
\begin{equation}\label{eqn:d(x_l)=2}
d_{D_1}^+(x) = 2  \quad \text{and} \quad d_{D_1}^-(x) =2
\end{equation}
for all $x \in V(D_1)$.

Suppose that $\NNN(D_1)$ is disconnected.
Then $x_j$ and $x_{j+1}$ are not adjacent in $\NNN(D_1)$ for some $j \in \{1, 2, 3, 4, 5\}$, so
\begin{equation}\label{eqn:N(D_1)}
N_{D_1}^+(x) \neq \{x_j, x_{j+1}\} \quad \mbox{and} \quad  N_{D_1}^-(x) \neq \{x_j, x_{j+1}\}
\end{equation}
for all $x \in V(D_1)$.
Yet, since $x_j$ and $x_{j+1}$ are adjacent in $\NNN(D)$, they have a common in-neighbor or a common out-neighbor in $D$.
By Lemma~\ref{lem:inverse}, we may assume that $x_j$ and $x_{j+1}$ have a common out-neighbor $y$ in $D$.
Obviously $y \notin V(D_1)$.
Without loss of generality, we may assume that $y \in X_1$.
Then $x_j$ and $x_{j+1}$ do not belong to $X_1$.
By \eqref{eqn:X_i cap V(P)}, $|V(P) \setminus X_1|=4$, so $|(N_{D}^+(y) \cup N_{D}^-(y))\cap V(P)|=4$ by Lemma~\ref{lem:in and out}.
Since $P$ is an induced path of $D$,
$|N_{D}^-(y)\cap V(P)| = 2$ and $|N_{D}^+(y)\cap V(P)| = 2$.
Thus $N_{D}^-(y)\cap V(P) =\{x_j, x_{j+1}\}$.
Since $|N_{D}^+(y) \cap V(P)| = 2$, $N_{D}^+(y)\cap V(P) $ also forms an edge in $\NNN(D)$, that is, $N_{D}^+(y) \cap V(P) =\{x_k, x_{k+1}\}$ for some $k \in \{1, 2, 3, 4, 5\} \setminus \{j-1, j, j+1\}$.
Therefore $V(P) \setminus X_1 = \{x_j, x_{j+1}, x_k, x_{k+1} \}$.
Let $z$ be one of the two vertices in $X_1 \cap V(D_1)$.
Then $z \neq y$.
By Lemma~\ref{lem:in and out}, $N_{D_1}^+(z) \cup N_{D_1}^-(z) = \{x_j, x_{j+1}, x_k, x_{k+1}\}$.
By \eqref{eqn:d(x_l)=2}, $d_{D_1}^+(z)=d_{D_1}^-(z)=2$.
Then, by \eqref{eqn:N(D_1)}, $$\{N_{D_1}^+(z), N_{D_1}^-(z)\}=\{\{x_j, x_k\}, \{x_{j+1}, x_{k+1}\}\} \text{ or } \{N_{D_1}^+(z), N_{D_1}^-(z)\}=\{\{x_j, x_{k+1}\}, \{x_{j+1}, x_{k}\}\}.$$
In the former case, $x_j$ and $x_k$ are adjacent in $\NNN(D_1)$ and so in $\NNN(D)$, which is impossible as $P$ is an induced path in $\NNN(D)$.
In the latter case, $x_j$ and $x_{k+1}$ are adjacent and $x_{j+1}$ and $x_k$ are adjacent in $\NNN(D)$.
However, either $x_j$ and $x_{k+1}$ or $x_{j+1}$ and $x_k$ are not consecutive on $P$ and we reach a contradiction.
Thus $\NNN(D_1)$ is connected.
Since $P$ is an induced path of $\NNN(D)$ and $\NNN(D_1)$ is a spanning subgraph of $P$, we may conclude that $\NNN(D_1)=P$.

Let $D_2=D_1-x_2$.
Then $D_2$ is a $3$-partite tournament by \eqref{eqn:X_i cap V(P)} and, by Lemma~\ref{lem:subgraph}, $\NNN(D_2)$ is a subgraph of $\NNN(D_1)=P$.
Since $P - x_2$ is disconnected, $\NNN(D_2)$ is disconnected.
Without loss of generality, we may assume that $x_2 \in X_1$.
Then, by \eqref{eqn:X_i cap V(P)},
\begin{equation}\label{eqn:for D2}
|V(D_2) \cap X_1|=1 \quad \text{and} \quad |V(D_2) \cap X_2|=|V(D_2) \cap X_3|=2.
\end{equation}

Suppose that $u$ and $v$ are true twins in $D_2$ for some distinct vertices $u$ and $v$ in $V(D_2)$, that is, $N_{D_2}^+(u) = N_{D_2}^+(v)$ and $N_{D_2}^-(u) = N_{D_2}^-(v)$.
Then both $u$ and $v$ belong to the same partite set by Lemma~\ref{lem:homo-same-partite}.
Thus, by \eqref{eqn:for D2}, $u$ and $v$ belong to $X_2$ or $X_3$.
By \eqref{eqn:d(x_l)=2}, either $d_{D_2}^+(u)=d_{D_2}^+(v)=2$ and $d_{D_2}^-(u)=d_{D_2}^-(v)=1$ or
$d_{D_2}^+(u)=d_{D_2}^+(v)=1$ and $d_{D_2}^-(u)=d_{D_2}^-(v)=2$.
By Lemma~\ref{lem:inverse}, we may assume that $d_{D_2}^+(u)=d_{D_2}^+(v)=2$ and $d_{D_2}^-(u)=d_{D_2}^-(v)=1$.
Then $x_2$ is a common in-neighbor of $u$ and $v$ in $D_1$ by \eqref{eqn:d(x_l)=2}.
Thus $N_{D_1}^+(u) = N_{D_1}^+(v)$ and $N_{D_1}^-(u) = N_{D_1}^-(v)$, that is, $u$ and $v$ are true twins in $D_1$.
Since $|V(D_1)| \ge 3$ and $\NNN(D_1)$ is connected, $\NNN(D_1)$ contains a triangle  by Lemma~\ref{lem:equivalence contains a triangle}.
Yet, $\NNN(D_1)=P$ and we reach a contradiction.
Therefore there is no pair of vertices which are true twins in $D_2$.
Thus, by Lemma~\ref{lem:5vertices diconnected}, $\NNN(D_2)$ is connected and we reach a contradiction.
Hence $\NNN(D)$ contains no induced path of length $5$ and we are done.
\end{proof}

\noindent
From the above theorem, the following corollary immediately follows.

\begin{Cor} \label{cor:diameter}
Let $D$ be a $k$-partite tournament for $k \ge 3$.
Then each component of $\NNN(D)$ has diameter at most $4$.
\end{Cor}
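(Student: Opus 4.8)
The plan is to reduce the statement to Theorem~\ref{thm:diameter} via the elementary observation that a shortest path in any graph is an induced path. Recall that the diameter of a component is the maximum over all pairs of vertices in that component of their distance, and that a path of length $5$ visits $6$ vertices, i.e.\ is a copy of $P_6$. So the corollary is exactly the assertion that no component of $\NNN(D)$ contains two vertices at distance $5$ or more, and Theorem~\ref{thm:diameter} forbids the induced $P_6$ that such a pair would produce.

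First I would argue by contradiction: suppose some component $C$ of $\NNN(D)$ has diameter at least $5$, so there exist vertices $u$ and $v$ of $C$ with $d_{\NNN(D)}(u,v) \ge 5$. Choose a shortest $u$--$v$ path $P$ in $\NNN(D)$; its length is at least $5$. Let $y_0 y_1 \cdots y_5$ denote the first six vertices along $P$ (these exist since $P$ has length at least $5$). The key step is to observe that any subpath of a geodesic is itself a geodesic, and every geodesic is an induced path: if $y_a$ and $y_b$ with $b > a+1$ were adjacent in $\NNN(D)$, the chord $y_a y_b$ would yield a strictly shorter $u$--$v$ walk, contradicting the minimality of $P$. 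Hence $y_0 y_1 \cdots y_5$ is an induced path of length $5$ in $\NNN(D)$.

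This induced $P_6$ contradicts Theorem~\ref{thm:diameter}, which asserts that $\NNN(D)$ is $P_6$-free for every $k$-partite tournament $D$ with $k \ge 3$. Therefore no component can have diameter exceeding $4$, and each component of $\NNN(D)$ has diameter at most $4$, as claimed.

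I do not anticipate any genuine obstacle here, which is consistent with the corollary being flagged as immediate; the only point requiring care is the standard (but essential) fact that shortest paths are induced, which is what converts the ``no induced $P_6$'' conclusion of the theorem into the metric statement about diameter. If one preferred to avoid the contradiction framing, the same content can be phrased directly: for any two vertices $u,v$ in a common component a shortest $u$--$v$ path is induced, hence has length at most $4$ by Theorem~\ref{thm:diameter}, giving $d_{\NNN(D)}(u,v) \le 4$.
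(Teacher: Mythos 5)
Your proof is correct and is precisely the deduction the paper intends: the paper states the corollary as an immediate consequence of Theorem~\ref{thm:diameter} without writing out a proof, and your argument—that a shortest path is induced, so a pair of vertices at distance at least $5$ would yield an induced $P_6$—is exactly the standard observation that makes it immediate.
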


A graph is said to be {\it triangle extended complete bipartite} if it is obtained from a complete bipartite graph by possibly attaching some $P_3$s to a common edge of the bipartite graph.
A set $U \subseteq V$
{\it dominates} a set $U' \subseteq V$ if any vertex $v \in  U'$ either lies in $U$ or has a neighbor in $U$. We also say that $U$ dominates $G[U']$. A
subgraph $H$ of $G$ is a {\it dominating subgraph} of $G$ if $V(H)$ dominates $G$.

Liu \emph{et al.}~\cite{van2010new} showed that a graph $G$ is $P_6$-free if and only if each connected induced subgraph of $G$ has a dominating (not necessarily
induced) triangle extended complete bipartite graph or an induced dominating $C_6$. Thus the following result immediately follows.

\begin{Cor}
Let $D$ be a $k$-partite tournament for $k \ge 3$. Then each connected induced subgraph of $\NNN(D)$ has a dominating (not necessarily
induced) triangle extended complete bipartite or an induced dominating $C_6$.
\end{Cor}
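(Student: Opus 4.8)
The plan is to derive this as an immediate consequence of Theorem~\ref{thm:diameter} together with the characterization of $P_6$-free graphs due to Liu \emph{et al.}~\cite{van2010new} quoted just above. First I would invoke Theorem~\ref{thm:diameter}, which guarantees that for any $k$-partite tournament $D$ with $k \ge 3$, the niche graph $\NNN(D)$ contains no induced path of length $5$; that is, $\NNN(D)$ is $P_6$-free.

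Next I would apply the cited equivalence. Since the result of Liu \emph{et al.} states that a graph $G$ is $P_6$-free \emph{if and only if} each connected induced subgraph of $G$ has a dominating (not necessarily induced) triangle extended complete bipartite graph or an induced dominating $C_6$, the forward implication of this biconditional, applied to $G = \NNN(D)$, yields precisely the desired conclusion: each connected induced subgraph of $\NNN(D)$ has one of the two prescribed dominating structures. No further combinatorial argument is needed, because $P_6$-freeness is exactly the hypothesis consumed by the Liu \emph{et al.} characterization.

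The entire difficulty of the argument is therefore already absorbed into the proof of Theorem~\ref{thm:diameter}; consequently there is no ``main obstacle'' internal to this corollary. Its content is simply a translation of the $P_6$-freeness established earlier into the structural language of dominating subgraphs. If one wished to be fully explicit, one could additionally observe that the class of $P_6$-free graphs is closed under taking induced subgraphs---an induced $P_6$ in any induced subgraph of $\NNN(D)$ would itself be an induced $P_6$ in $\NNN(D)$---although this remark is not strictly required, since the quoted biconditional already quantifies over the connected induced subgraphs of the ambient graph.
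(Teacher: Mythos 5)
Your proposal is correct and matches the paper exactly: the paper presents this corollary as an immediate consequence of Theorem~\ref{thm:diameter} combined with the forward direction of the $P_6$-free characterization of Liu \emph{et al.}~\cite{van2010new}, with no further argument given. Your additional remark on closure of $P_6$-freeness under induced subgraphs is accurate but, as you note, unnecessary since the characterization already quantifies over connected induced subgraphs.
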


By using Theorem~\ref{thm:diameter}, we may find all the niche-realizable pairs $(P_n,k)$ and all the niche-realizable pairs $(C_n,k)$ for positive integers $n \geq k \geq 3$.

\begin{Lem}\label{lem:path}
For positive integers $n \ge k \ge 3$, $(P_n, k)$ is niche-realizable if and only if $(n, k) \in \{(3, 3), (4, 3), (4, 4), (5, 3)\}$.
\end{Lem}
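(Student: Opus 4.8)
The plan is to prove the two directions after a quick reduction that makes the problem finite. For the ``only if'' direction, suppose $(P_n,k)$ is niche-realizable through a $k$-partite tournament $D$ with $k\ge 3$, so that $\NNN(D)\cong P_n$. Whenever $n\ge 6$ the path $P_n$ contains an induced copy of $P_6$, so Theorem~\ref{thm:diameter} immediately forces $n\le 5$. Together with the standing hypothesis $n\ge k\ge 3$, the only candidate pairs left are $(3,3)$, $(4,3)$, $(4,4)$, $(5,3)$, $(5,4)$, and $(5,5)$. The proof then splits into realizing the first four and excluding the last two.

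For the four realizable pairs I would simply exhibit a digraph and read off its niche graph. For $(3,3)$, any transitive orientation of $K_3$ gives $P_3$ by Lemma~\ref{lem:K3}. For $(4,3)$, take the orientation of $K_{2,1,1}$ with parts $\{a_1,a_2\},\{b\},\{c\}$ and arc set $\{(a_1,b),(a_2,b),(a_1,c),(c,a_2),(b,c)\}$; a direct computation of the out- and in-neighborhoods shows $\NNN(D)$ is the path $a_2-a_1-b-c\cong P_4$. For $(4,4)$, take the tournament on $\{1,2,3,4\}$ consisting of the directed $4$-cycle $1\to2\to3\to4\to1$ together with the two chords $1\to3$ and $2\to4$; one checks $\NNN(D)=1-2-3-4\cong P_4$ (consistent with the fact that $\overline{P_4}\cong P_4$ is an even path, as in Bowser \emph{et al.}~\cite{bowser1999niche}). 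The most delicate construction is $(5,3)$: take the orientation of $K_{2,2,1}$ with parts $\{v_1,v_4\},\{v_2,v_5\},\{v_3\}$ obtained from the circulant tournament on $\ZZ_5$ with connection set $\{1,2\}$ by deleting the two arcs that fall inside the parts, i.e.\ with arc set $\{(v_1,v_2),(v_2,v_3),(v_3,v_4),(v_4,v_5),(v_5,v_1),(v_1,v_3),(v_2,v_4),(v_3,v_5)\}$. Here $N^+(v_i)$ and $N^-(v_i)$ are read off cyclically and one verifies directly that $\NNN(D)=v_1-v_2-v_3-v_4-v_5\cong P_5$.

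To exclude $(5,5)$, note that a $5$-partite tournament on $5$ vertices is an orientation of $K_5$, i.e.\ a tournament, so it suffices to show $P_5$ is not the niche graph of any tournament. Its complement $\overline{P_5}$ has $6$ edges on $5$ vertices and contains the triangle on $\{v_1,v_3,v_5\}$, so it is neither a forest nor an odd cycle (an odd cycle on $5$ vertices has only $5$ edges); hence $\overline{P_5}$ does not belong to any of the families in the characterization of Bowser \emph{et al.}~\cite{bowser1999niche}, and $(5,5)$ is impossible.

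The main obstacle is excluding $(5,4)$, where $D$ is an orientation of $K_{2,1,1,1}$ with parts $\{a,a'\},\{b\},\{c\},\{d\}$. Since $P_5$ is triangle-free, Lemma~\ref{lem:in-out-degree-condition} gives $d_D^{+}(x),d_D^{-}(x)\le 2$ for every $x$; as each singleton vertex is joined to all four others, this forces $d_D^{+}=d_D^{-}=2$ for $b,c,d$, and a count of the nine arcs then forces $\{(d^{+}(a),d^{-}(a)),(d^{+}(a'),d^{-}(a'))\}=\{(2,1),(1,2)\}$, say $d^{+}(a)=2$ after relabeling (and using Lemma~\ref{lem:inverse}). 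Consequently $N_D^{+}(a)$ and $N_D^{-}(a')$ are each a $2$-subset of $\{b,c,d\}$ inducing an edge of $P_5$. The crux is a short case analysis according to whether these two edges coincide: in either case the remaining arcs among $\{b,c,d\}$ are forced to be a directed $3$-cycle (two choices), and computing $\NNN(D)$ in each of the resulting digraphs yields $C_5$ every time. Since $C_5\not\cong P_5$, we reach a contradiction, and $(5,4)$ is excluded, completing the ``only if'' direction. I expect this forced-to-be-$C_5$ computation to be the technical heart of the argument.
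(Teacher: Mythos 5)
Your reduction via Theorem~\ref{thm:diameter}, your four constructions (I checked each: they do yield $P_3$, $P_4$, $P_4$, and $P_5$ respectively), and your exclusion of $(5,5)$ via the Bowser--Cable--Lundgren characterization of complements of niche graphs of tournaments are all correct. The gap is in the exclusion of $(5,4)$: your claim that ``in either case the remaining arcs among $\{b,c,d\}$ are forced to be a directed $3$-cycle'' is false, and in the non-coincidence branch it is exactly backwards. If the triangle on $\{b,c,d\}$ is a directed $3$-cycle, then each of $b,c,d$ already has one in-arc and one out-arc inside the triangle, so each is beaten by exactly one of $a,a'$ and beats the other; this forces $N_D^-(a')=\{b,c,d\}\setminus N_D^+(a')=N_D^+(a)$, i.e.\ the two edges \emph{must} coincide. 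Hence in the branch where they do not coincide, the subtournament on $\{b,c,d\}$ is necessarily \emph{transitive}, and this configuration is perfectly consistent with everything you established: take arcs $a\to b$, $a\to c$, $d\to a$, $a'\to b$, $c\to a'$, $d\to a'$, $b\to c$, $b\to d$, $c\to d$. Then $d^+=d^-=2$ on $\{b,c,d\}$, $(d^+(a),d^-(a))=(2,1)$, $(d^+(a'),d^-(a'))=(1,2)$, and $N_D^+(a)=\{b,c\}$, $N_D^-(a')=\{c,d\}$ each induce an edge, yet the triangle is transitive. Your enumeration, which only computes the two directed-$3$-cycle digraphs, therefore skips a whole family of admissible orientations, and the exclusion of $(5,4)$ is incomplete as written.

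The gap is reparable, because the skipped case also fails to produce $P_5$: with transitive triangle $s\to m$, $s\to t$, $m\to t$, the degree caps of Lemma~\ref{lem:in-out-degree-condition} force all six remaining arcs ($a\to s$, $a'\to s$, $t\to a$, $t\to a'$, $a\to m$, $m\to a'$), and then the two-element neighborhoods $N^+_D(a)=\{s,m\}$, $N^+_D(s)=\{m,t\}$, $N^-_D(s)=\{a,a'\}$, $N^-_D(m)=\{a,s\}$, $N^+_D(m)=\{t,a'\}$ force the five distinct niche edges $sm$, $mt$, $aa'$, $as$, $ta'$, which already form a $5$-cycle --- impossible in $P_5$, which has only four edges (indeed $\NNN(D)\cong C_5$ here too). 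For comparison, the paper avoids all enumeration and treats $k=4$ and $k=5$ uniformly: writing $P_5=x_1x_2x_3x_4x_5$, it shows $\{x_2\}$ (and by symmetry $\{x_4\}$) cannot be a singleton partite set, since then $d^+_D(x_2)=d^-_D(x_2)=2$ and, after Lemma~\ref{lem:inverse}, the leaf $x_1$ would lie in the clique $N^+_D(x_2)$ and gain a neighbor other than $x_2$; this kills $k=5$ at once, and for $k=4$ the parts must be $\{x_1\},\{x_3\},\{x_5\},\{x_2,x_4\}$, whence $x_2$ has two out- or two in-neighbors among the pairwise nonadjacent $x_1,x_3,x_5$, a contradiction. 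That leaf-based argument is what your brute-force case analysis should be replaced by, or else the transitive case must be added to it.
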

\begin{proof}
Let $D_1$, $D_2$, $D_3$, and $D_4$ be the digraphs in Figure~\ref{fig:path} which are isomorphic to some orientations of $K_{1,1,1}$, $K_{1,1,2}$, $K_{1,1,1,1}$, and $K_{1,2,2}$, respectively.
It is easy to check that $\NNN(D_1) \cong P_3$, $\NNN(D_2)  \cong P_4$, $\NNN(D_3)  \cong P_4$, and $\NNN(D_4)  \cong P_5$.
Hence the ``if'' part is true.

Now suppose that $(P_n, k)$ is niche-realizable.
By Theorem~\ref{thm:diameter}, $n \le 5$.
Thus we only need to show that $(n, k)$ is neither $(5, 4)$ nor $(5, 5)$.
Let $D$ be a $k$-partite tournament such that $\NNN(D) \cong P_5$.
We denote $P_5$ by $x_1x_2x_3x_4x_5$.
Since $\NNN(D) \cong P_5$, $\NNN(D)$ is triangle-free and so, by Lemma~\ref{lem:in-out-degree-condition},  every vertex of $D$ has indegree at most two and outdegree at most two in $D$.
Suppose that $\{x_2\}$ is one of the partite sets of $D$.
Then $N_D^+(x_2) \cup N_D^-(x_2) = V(D) \setminus \{x_2\}$ by Lemma~\ref{lem:in and out}, so $d_D^+(x_2)=2$ and $d_D^-(x_2)=2$.
By Lemma~\ref{lem:inverse}, we may assume that $x_1$ is a out-neighbor of $x_2$ in $D$.
Since $N_D^+(x_2)$ forms an edge in $\NNN(D)$, $x_1$ is adjacent to a vertex in $P_5$ other than $x_2$ and we reach a contradiction.
Therefore $\{x_2\}$ is properly contained in a partite set of $D$.
Thus $k \neq 5$.
By symmetry, $\{x_4\}$ is properly contained in a partite set of $D$.
Now suppose that $k=4$.
Then $\{x_1\}$, $\{x_3\}$, $\{x_5\}$, and $\{x_2, x_4\}$ are the partite sets of $D$.
Therefore $d_D^+(x_2)+d_D^-(x_2) =3$ by Lemma~\ref{lem:in and out} and so $d_D^+(x_2)=2$ or $d_D^-(x_2)=2$.
By Lemma~\ref{lem:inverse}, we may assume that $d_D^+(x_2)=2$.
Then the out-neighbors of $x_2$ in $D$ are adjacent in $\NNN(D)$.
However, the possible out-neighbors of $x_2$ in $D$ are $x_1$, $x_3$, $x_5$ no two of which are consecutive on $P_5$.
Hence we have reached a contradiction and so $k=3$.
This completes the proof.
\end{proof}

\begin{figure}
\begin{center}

\begin{tikzpicture}[x=1.0cm, y=1.0cm]

    \vertex (x1) at (0,0) [label=above:$$]{};
    \vertex (x2) at (1,0) [label=above:$$]{};
    \vertex (x3) at (2,0) [label=above:$$]{};

    \path
    (x1) edge [->,thick] (x2)
    (x1) edge [->,bend left=25,thick] (x3)
    (x2) edge [->,thick] (x3)

	;
\draw (1, -0.5) node{$D_1$};
\end{tikzpicture}
\qquad \qquad\qquad
\begin{tikzpicture}[x=1.0cm, y=1.0cm]

    \vertex (x1) at (0,0) [label=above:$$]{};
    \vertex (x2) at (1,0) [label=above:$$]{};
    \vertex (x3) at (2,0) [label=above:$$]{};

    \path
    (x1) edge [-,thick] (x2)
    (x2) edge [-,thick] (x3)

	;
\draw (1, -0.5) node{$\NNN(D_1)$};

\end{tikzpicture}
\vskip1cm
\quad
\begin{tikzpicture}[x=1.0cm, y=1.0cm]

   \vertex (x1) at (0,0) [label=above:$$]{};
   \vertex (x2) at (1,0) [label=above:$$]{};
   \vertex (x3) at (1,1) [label=above:$$]{};
   \vertex (x4) at (0,1) [label=above:$$]{};
   \path
   (x2) edge [->,thick] (x1)
   (x3) edge [->,thick] (x1)
   (x3) edge [->,thick] (x2)
   (x4) edge [->,thick] (x2)
   (x4) edge [->,thick] (x3)
	;
\draw (0.5, -0.5) node{$D_2$};
\end{tikzpicture}
\qquad \qquad \qquad \qquad
\begin{tikzpicture}[x=1.0cm, y=1.0cm]

   \vertex (x1) at (0,0) [label=above:$$]{};
   \vertex (x2) at (1,0) [label=above:$$]{};
   \vertex (x3) at (1,1) [label=above:$$]{};
   \vertex (x4) at (0,1) [label=above:$$]{};
   \path
   (x1) edge [-,thick] (x2)
   (x2) edge [-,thick] (x3)
   (x3) edge [-,thick] (x4)

	;
\draw (0.5, -0.5) node{$\NNN(D_2)$};

\end{tikzpicture}

\vskip1cm
\quad
\begin{tikzpicture}[x=1.0cm, y=1.0cm]

   \vertex (x1) at (0,0) [label=above:$$]{};
   \vertex (x2) at (1,0) [label=above:$$]{};
   \vertex (x3) at (1,1) [label=above:$$]{};
   \vertex (x4) at (0,1) [label=above:$$]{};
   \path
   (x2) edge [->,thick] (x1)
   (x3) edge [->,thick] (x1)
   (x3) edge [->,thick] (x2)
   (x4) edge [->,thick] (x2)
   (x4) edge [->,thick] (x3)
   (x1) edge [->,thick] (x4)
	;
\draw (0.5, -0.5) node{$D_3$};
\end{tikzpicture}
\qquad \qquad \qquad \qquad
\begin{tikzpicture}[x=1.0cm, y=1.0cm]

   \vertex (x1) at (0,0) [label=above:$$]{};
   \vertex (x2) at (1,0) [label=above:$$]{};
   \vertex (x3) at (1,1) [label=above:$$]{};
   \vertex (x4) at (0,1) [label=above:$$]{};
   \path
   (x1) edge [-,thick] (x2)
   (x2) edge [-,thick] (x3)
   (x3) edge [-,thick] (x4)

	;
\draw (0.5, -0.5) node{$\NNN(D_3)$};

\end{tikzpicture}

\vskip1cm
\quad
\begin{tikzpicture}[x=1.0cm, y=1.0cm]

   \vertex (x1) at (0,0) [label=above:$$]{};
   \vertex (x2) at (1.25,0) [label=above:$$]{};
   \vertex (x3) at (0.625,0.625) [label=above:$$]{};
   \vertex (x4) at (0,1.25) [label=above:$$]{};
   \vertex (x5) at (1.25,1.25) [label=above:$$]{};
   \path
   (x2) edge [->,thick] (x1)
   (x1) edge [->,thick] (x3)
   (x1) edge [->,bend left=15,thick] (x5)

   (x2) edge [->,bend right=20,thick] (x3)
   (x4) edge [->,bend right=17,thick] (x2)

   (x3) edge [->,bend right=10,thick] (x4)
   (x3) edge [->,bend right=20,thick] (x5)

   (x5) edge [->,thick] (x4)
	;
	;
\draw (0.625, -0.5) node{$D_4$};
\end{tikzpicture}
\qquad  \qquad  \qquad \qquad
\begin{tikzpicture}[x=1.0cm, y=1.0cm]

   \vertex (x1) at (0,0) [label=above:$$]{};
   \vertex (x2) at (1.25,0) [label=above:$$]{};
   \vertex (x3) at (0.625,0.625) [label=above:$$]{};
   \vertex (x4) at (0,1.25) [label=above:$$]{};
   \vertex (x5) at (1.25,1.25) [label=above:$$]{};
   \path
   (x4) edge [-,thick] (x5)
   (x5) edge [-,thick] (x3)
   (x3) edge [-,thick] (x1)
   (x1) edge [-,thick] (x2)

	;
\draw (0.625, -0.5) node{$\NNN(D_4)$};
\end{tikzpicture}

\end{center}
\caption{The digraphs $D_1$, $D_2$, $D_3$, and $D_4$ which are isomorphic to some orientations of $K_{1,1,1}$, $K_{1,1,2}$, $K_{1,1,1,1}$, and $K_{1,2,2}$, respectively, and their niche graphs}
\label{fig:path}
\end{figure}
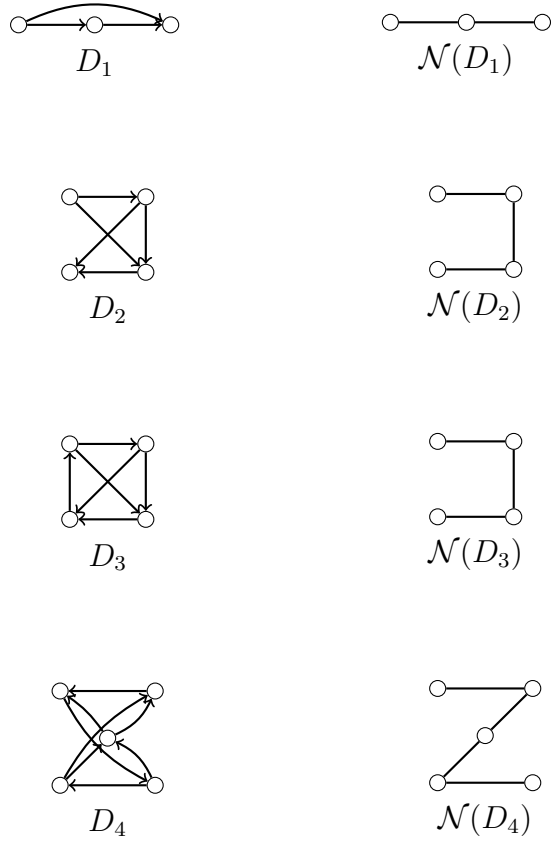

\begin{Lem} \label{prop:triangle-free-character}
For a $k$-partite tournament $D$ with $n$ vertices for some integers $n \geq k \geq 3$,
suppose that ${\mathcal N}(D)$ is a connected triangle-free graph.
Then
 $k \in \{ 3,4,5\}$ and
    \begin{equation} \label{eq:triangle-free-character}
\begin{cases} 3 \leq n \leq 6 & \mbox{if $k=3$} \\
4 \leq n \leq 5 & \mbox{if $k=4$} \\
  n=5 & \mbox{if $k=5$}. \end{cases}
\end{equation}
\end{Lem}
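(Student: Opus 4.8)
The plan is to derive everything from two ingredients: a degree bound coming from triangle-freeness, and the covering identity of Lemma~\ref{lem:in and out}. First I would observe that since $\NNN(D)$ is triangle-free it is $K_3$-free, so Lemma~\ref{lem:in-out-degree-condition} applied with $m=3$ gives $d_D^+(x) \le 2$ and $d_D^-(x) \le 2$ for every vertex $x$ of $D$.

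Next I would bound the size of each partite set from below. Fix a partite set $X$ and a vertex $x \in X$. By Lemma~\ref{lem:in and out}, $N_D^+(x) \cup N_D^-(x) = V(D) \setminus X$; since $D$ is an orientation, these two neighborhoods are disjoint, so $n - |X| = d_D^+(x) + d_D^-(x) \le 4$. Hence $|X| \ge n-4$ for every partite set $X$.

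Summing this inequality over all $k$ partite sets $X_1, \ldots, X_k$ yields $n = \sum_{i=1}^{k} |X_i| \ge k(n-4)$, which rearranges to $n(k-1) \le 4k$, that is, $n \le \frac{4k}{k-1}$. Combining with the standing hypothesis $n \ge k$ forces $k \le \frac{4k}{k-1}$, hence $k(k-1) \le 4k$ and $k \le 5$; thus $k \in \{3,4,5\}$. Plugging each value of $k$ into $k \le n \le \frac{4k}{k-1}$ gives exactly $3 \le n \le 6$ for $k=3$, $4 \le n \le 5$ for $k=4$ (as $\lfloor 16/3 \rfloor = 5$), and $n=5$ for $k=5$, which is precisely \eqref{eq:triangle-free-character}.

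I expect no serious obstacle here: the argument is essentially forced once the two lemmas are in hand. The only point requiring care is the boundary arithmetic, namely verifying that the integer part of $\frac{4k}{k-1}$ matches the claimed upper bounds and that it is the combination with $n \ge k$ that rules out $k \ge 6$. I would also remark that connectedness of $\NNN(D)$ plays no role in this lemma; triangle-freeness alone suffices for all the stated conclusions.
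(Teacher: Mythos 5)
Your proof is correct and takes essentially the same route as the paper: both combine Lemma~\ref{lem:in-out-degree-condition} (with $m=3$) and Lemma~\ref{lem:in and out} to obtain $n-|X|=d_D^+(x)+d_D^-(x)\le 4$ for every partite set $X$, the only (cosmetic) difference being that the paper applies this to a smallest partite set, yielding $n-\left\lfloor n/k \right\rfloor \le 4$ and disposing of $k\ge 6$ directly, while you sum $|X_i|\ge n-4$ over all partite sets and extract $k\le 5$ from the combination with $n\ge k$. Your closing remark is also accurate: the paper's own proof never uses the connectedness of $\NNN(D)$, only triangle-freeness.
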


\begin{proof}
If $k \geq 6$, then $ 5 \leq d^+_D(v)+d^-_D(v)$ for each vertex $v$ in $D$ by Lemma~\ref{lem:in and out}, which contradicts Lemma~\ref{lem:in-out-degree-condition}.
Thus $k \leq 5$.
Let $X_i$ be a partite set of $D$ for each $1\leq i \leq k$.
Without loss of generality, we may assume that $X_1$ is a partite set with the smallest size among the partite sets.
Then $ |X_1| \leq \left\lfloor \frac{n}{k}\right \rfloor $.
Take a vertex $u$ in $X_1$.
By Lemma~\ref{lem:in and out},
$n-|X_1|=d^+_D(u)+d^-_D(u)$.
Since $d^+_D(u)+d^-_D(u) \leq 4$ by Lemma~\ref{lem:in-out-degree-condition}, $n-|X_1|\leq 4$ and so
\[n - \left\lfloor \frac{n}{k} \right\rfloor \leq 4.\]
It is easy to check that \eqref{eq:triangle-free-character} is an immediate consequence of this inequality.
\end{proof}

\begin{Lem}\label{lem:cycle}
For positive integers $n \ge k \ge 3$, $(C_n, k)$ is niche-realizable if and only if $(n, k) \in \{(5, 3), (5, 4), (5, 5), (6, 3)\}$.
\end{Lem}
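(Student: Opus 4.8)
The plan is to prove the two directions separately: the ``only if'' direction follows almost entirely from the structural bounds already available, while the ``if'' direction requires four explicit constructions. For the ``only if'' direction, suppose $(C_n,k)$ is niche-realizable with $n \ge k \ge 3$. If $n=3$, then $C_3 \cong K_3$ and $n \ge k$ forces $k=3$; by Lemma~\ref{lem:K3} the niche graph of an orientation of $K_3$ is $I_3$ or $P_3$, never $K_3$, so this case cannot occur (this is also immediate from Theorem~\ref{thm:complete}). If $n \ge 4$, then $C_n$ is connected and triangle-free, so Lemma~\ref{prop:triangle-free-character} forces $k \in \{3,4,5\}$ together with $n \le 6$ for $k=3$, $n \le 5$ for $k=4$, and $n=5$ for $k=5$. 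Combined with $n \ge k$, this leaves exactly the candidates $(C_4,3)$, $(C_4,4)$, $(C_5,3)$, $(C_5,4)$, $(C_5,5)$, and $(C_6,3)$.

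It remains to eliminate the two $C_4$ cases. When $n=4$ and $k \ge 3$, any partition of the four vertices into at least three parts contains a singleton part $\{x\}$. By Lemma~\ref{lem:in and out}, $N_D^+(x) \cup N_D^-(x) = V(D) \setminus \{x\}$, so $d_D^+(x)+d_D^-(x)=3$; but $C_4$ is triangle-free, whence Lemma~\ref{lem:in-out-degree-condition} gives $d_D^+(x) \le 1$ and $d_D^-(x) \le 1$, a contradiction. Thus neither $(C_4,3)$ nor $(C_4,4)$ is niche-realizable, and the ``only if'' direction follows.

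For the ``if'' direction I would exhibit one $k$-partite tournament $D$ for each surviving pair and verify $\NNN(D) \cong C_n$ by listing the cliques $N_D^+(w)$ and $N_D^-(w)$. For $(C_5,5)$ I take the circulant tournament on $\ZZ_5$ with $i \to j$ whenever $j-i \in \{1,2\} \pmod 5$; then each $N_D^-(i)=\{i+3,i+4\}$ is a consecutive pair, and as $i$ varies these are exactly the five edges of $C_5$. Deleting the arc $(1,3)$ makes $\{1,3\}$ a partite set, producing an orientation of $K_{1,1,1,2}$ whose niche graph is still $C_5$, which settles $(C_5,4)$. For $(C_6,3)$ I take the circulant orientation of $K_{2,2,2}$ on $\ZZ_6$ with partite sets $\{i,i+3\}$ and $i \to j$ whenever $j-i \in \{1,2\} \pmod 6$; here $N_D^+(i)=\{i+1,i+2\}$ and $N_D^-(i)=\{i+4,i+5\}$ yield precisely the consecutive pairs, so $\NNN(D) \cong C_6$. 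Finally, for $(C_5,3)$ I take the orientation of $K_{1,2,2}$ with partite sets $\{p\}$, $\{a,b\}$, $\{c,d\}$ and arcs $p \to a$, $p \to c$, $b \to p$, $d \to p$, $a \to c$, $a \to d$, $c \to b$, $d \to b$; computing the neighborhoods shows the niche edges are exactly $pa$, $pb$, $ac$, $cd$, $db$, so $\NNN(D) \cong C_5$.

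I expect $(C_5,3)$ to be the main obstacle. If one insists that each of the two size-two partite sets be a non-edge of $C_5$, then all three edges of the path joining the two $C_5$-neighbors of the singleton would need the singleton as their only common neighbor, which is impossible because the singleton contributes only the two cliques $N_D^+$ and $N_D^-$. The resolution is to let one size-two part form an edge of $C_5$ -- same-part vertices may still be adjacent in $\NNN(D)$ through a common neighbor -- so that this edge is witnessed by a vertex of the other size-two part; the orientation above is arranged exactly so that the singleton $p$ witnesses two edges and the parts $\{a,b\}$ and $\{c,d\}$ witness the remaining three, while all required non-adjacencies (in particular $a \not\sim b$ and $p \not\sim c$, $p \not\sim d$) survive. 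Beyond this, the only other non-routine point is the degree-counting exclusion of $C_4$; every remaining check is a direct neighborhood computation.
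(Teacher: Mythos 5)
Your proof has a genuine gap in the ``only if'' direction, precisely at the exclusion of $(C_4,3)$ and $(C_4,4)$. You argue that since $C_4$ is triangle-free, Lemma~\ref{lem:in-out-degree-condition} gives $d_D^+(x)\le 1$ and $d_D^-(x)\le 1$ for the singleton-part vertex $x$, contradicting $d_D^+(x)+d_D^-(x)=3$. But that is a misreading of the lemma: triangle-free means $K_3$-free, so the lemma (with $m=3$) gives $d_D^+(x)\le 2$ and $d_D^-(x)\le 2$. The bound $\le 1$ would require the niche graph to be $K_2$-free, i.e.\ edgeless. With the correct bound, $d_D^+(x)+d_D^-(x)=3$ is entirely consistent (a $2{+}1$ split), so your claimed contradiction evaporates and the two $C_4$ cases remain open. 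This is not a cosmetic slip: ruling out $C_4$ genuinely requires more work. The paper does it by taking representatives $x_1,x_2,x_3$ of three distinct partite sets and the remaining vertex $x_4$, splitting on whether the induced subdigraph on $\{x_1,x_2,x_3\}$ is a directed $3$-cycle, and using the in/out-degree bound $\le 2$ to pin down the possible arc sets; in every case Lemma~\ref{lem:subgraph} forces $\NNN(D_5)$ to be a subgraph of $P_4$, which has only three edges, whereas $C_4$ has four. Some case analysis of this kind (or an equally careful counting of edge witnesses, noting that each vertex can witness at most one edge per neighborhood of size $2$) must replace your one-line degree argument.

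The rest of your proposal is sound, and in places takes a mildly different route from the paper. Your narrowing of candidates applies Lemma~\ref{prop:triangle-free-character} directly to $C_n$ ($n\ge 4$ connected and triangle-free), which simultaneously yields $n\le 6$ and kills $(6,4)$, $(6,5)$, $(6,6)$; the paper instead gets $n\le 6$ from the $P_6$-freeness result (Theorem~\ref{thm:diameter}) and then invokes Lemma~\ref{prop:triangle-free-character} for the $n=6$ cases --- both are legitimate since that lemma precedes this one. Your four constructions all verify: the circulant on $\ZZ_5$ with $N_D^-(i)=\{i+3,i+4\}$ gives $C_5$ for $k=5$; deleting the arc $(1,3)$ indeed preserves $\NNN(D)\cong C_5$ (the two edges it witnessed, $\{2,3\}$ and $\{1,2\}$, are re-witnessed by $N_D^-(4)$ and $N_D^+(0)$); the circulant orientation of $K_{2,2,2}$ is exactly the paper's $D_4$; and your $K_{1,2,2}$ example for $(C_5,3)$ computes correctly to the cycle $p\,a\,c\,d\,b\,p$ (the paper uses an orientation of $K_{1,1,3}$ instead, an immaterial difference). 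So the ``if'' direction stands; only the $C_4$ elimination needs to be redone.
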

\begin{proof}
Let $D_1$, $D_2$, and $D_3$ be the digraphs given in Figure~\ref{fig:cycle}.
Clearly, $D_1$, $D_2$, and $D_3$ are orientations of $K_{1, 1, 3}$, $K_{1,1,1,2}$, and $K_{1,1,1,1,1}$, respectively.
In addition, $\NNN(D_i) \cong C_5$ for each $i=1$, $2$, and $3$.
Thus $(C_5,3)$, $(C_5,4)$, and $(C_5, 5)$ are niche-realizable.
Now let $D_4$ be a digraph with the vertex set $V(D_4)=\{v_0, v_1, v_2, v_3, v_4, v_5\}$ and the arc set \[
A(D_4)=\{(v_{i-2}, v_i), (v_{i-1}, v_i), (v_i, v_{i+1}), (v_i, v_{i+2}) \mid i \in \{0,1,2,3,4,5\}\}
\]
where all the subscripts are reduced to modulo $6$ (see Figure~\ref{fig:cycle} for an illustration).
Since each vertex $v_i$ takes $v_{i+1}$ and $v_{i+2}$ as its out-neighbors and $v_{i-1}$ and $v_{i-2}$ as its in-neighbors, $D_4$ is an orientation of $K_{2,2,2}$ with partite sets $\{v_0, v_3\}$, $\{v_1, v_4\}$, and $\{v_2, v_5\}$.
Furthermore, it is easy to see that $\NNN(D_4) \cong C_6$.
Hence the ``if'' part is true.

Suppose that $(C_n, k)$ is niche-realizable.
By Theorem~\ref{thm:diameter}, $n \le 6$.
Thus we need to show that $(n, k) \notin \{(3,3), (4, 3), (4, 4), (6, 4), (6, 5), (6, 6)\}$.
By Lemma~\ref{lem:K3}, $(n, k) \neq (3,3)$.
In addition, by lemma~\ref{prop:triangle-free-character},
$(n, k) \notin \{(6,4), (6,5), (6,6)\}$.

Suppose that $(n, k) \in \{(4,3), (4,4)\}$.
Then there is a $k$-partite tournament $D_5$ such that $\NNN(D_5) \cong C_4$ and so $\NNN(D_5)$ is triangle-free. Therefore
\begin{equation}\label{eqn:cycle degree}
d_{D_5}^+(x) \le 2 \quad \text{and} \quad d_{D_5}^-(x) \le 2
\end{equation}
for all $x \in V(D_5)$.
Let $X_1$, $\ldots$, $X_k$ be the partite sets of $D_5$.
We take $x_i \in X_i$ for each $i=1$, $2$, and $3$.
Let $x_4$ be the vertex of $D_5$ that does not belong to $\{x_1, x_2, x_3\}$.
Suppose that the subdigraph of $D_5$ induced by $\{x_1, x_2, x_3\}$ is a directed cycle.
Then, by Lemma~\ref{lem:inverse}, \eqref{eqn:cycle degree}, and the symmetry of the directed cycle, we may assume that
\[
A(D_5) \subset \{(x_1, x_2), (x_2, x_3), (x_3, x_1), (x_1, x_4), (x_2, x_4), (x_4, x_3)\}.
\]
Then, by Lemma~\ref{lem:subgraph}, $\NNN(D_5)$ is a subgraph of $P_4$ and we reach a contradiction.
Thus the subdigraph of $D_5$ induced by $\{x_1, x_2, x_3\}$ is not a directed cycle.
Then, without loss of generality, we may assume that $(x_1, x_2), (x_1, x_3), (x_2, x_3) \in A(D_5)$.
By \eqref{eqn:cycle degree}, $(x_1, x_4) \notin A(D_5)$ and $(x_4, x_3) \notin A(D_5)$.
Thus $A(D_5) \subset \{(x_1, x_2), (x_1, x_3), (x_2, x_3), (x_4, x_1), (x_3, x_4), (x_2, x_4)\}$ or $A(D_5) \subset \{(x_1, x_2), (x_1, x_3), (x_2, x_3), (x_4, x_1), (x_3, x_4), (x_4, x_2)\}$.
In both cases, $\NNN(D_5)$ is a subgraph of $P_4$ by Lemma~\ref{lem:subgraph} and we reach a contradiction.
Thus $(n, k) \notin \{(4,3), (4,4)\}$.
This completes the proof.
\end{proof}

\begin{figure}
\begin{center}
\quad\quad
\begin{tikzpicture}[x=1.0cm, y=1.0cm]

   \vertex (x1) at (0,0) [label=above:$$]{};
   \vertex (x2) at (0,1) [label=above:$$]{};
   \vertex (x3) at (0,2) [label=above:$$]{};
   \vertex (x4) at (1,0.5) [label=above:$$]{};
   \vertex (x5) at (1,1.5) [label=above:$$]{};
   \path
   (x1) edge [->,thick] (x4)
   (x1) edge [->,thick] (x5)
   (x4) edge [->,thick] (x2)
   (x5) edge [->,thick] (x2)
   (x3) edge [->,thick] (x4)
   (x5) edge [->,thick] (x3)
   (x4) edge [->,thick] (x5)	
	;
\draw (0.5, -0.5) node{$D_1$};
\end{tikzpicture}
\qquad  \qquad \qquad  \qquad
\begin{tikzpicture}[x=1.0cm, y=1.0cm]

  \vertex (x1) at (0,0) [label=above:$$]{};
  \vertex (x2) at (0,1) [label=above:$$]{};
  \vertex (x3) at (0,2) [label=above:$$]{};
  \vertex (x4) at (1,0.5) [label=above:$$]{};
  \vertex (x5) at (1,1.5) [label=above:$$]{};
   \path
   (x1) edge [-,bend left=50,thick] (x3)
   (x2) edge [-,thick] (x3)
   (x2) edge [-,thick] (x5)
   (x4) edge [-,thick] (x5)
   (x1) edge [-,thick] (x4)

	;
\draw (0.5, -0.5) node{$\NNN(D_1)$};
\end{tikzpicture}

\vskip1cm
\quad
\begin{tikzpicture}[x=1.0cm, y=1.0cm]

   \vertex (x1) at (0,0) [label=above:$$]{};
   \vertex (x2) at (0,1) [label=above:$$]{};
   \vertex (x3) at (0,2) [label=above:$$]{};
   \vertex (x4) at (1,0.5) [label=above:$$]{};
   \vertex (x5) at (1,1.5) [label=above:$$]{};
   \path
   (x4) edge [->,thick] (x1)
   (x1) edge [->,thick] (x2)
   (x5) edge [->,thick] (x1)
   (x1) edge [->,bend left=50,thick] (x3)

   (x2) edge [->,thick] (x4)
   (x2) edge [->,thick] (x5)
   (x3) edge [->,thick] (x2)

   (x3) edge [->,thick] (x5)
   (x4) edge [->,thick] (x3)
	;
\draw (0.5, -0.5) node{$D_2$};
\end{tikzpicture}
\qquad  \qquad \qquad  \qquad
\begin{tikzpicture}[x=1.0cm, y=1.0cm]

  \vertex (x1) at (0,0) [label=above:$$]{};
  \vertex (x2) at (0,1) [label=above:$$]{};
  \vertex (x3) at (0,2) [label=above:$$]{};
  \vertex (x4) at (1,0.5) [label=above:$$]{};
  \vertex (x5) at (1,1.5) [label=above:$$]{};
   \path
   (x1) edge [-,bend left=50,thick] (x3)
   (x2) edge [-,thick] (x3)
   (x2) edge [-,thick] (x5)
   (x4) edge [-,thick] (x5)
   (x1) edge [-,thick] (x4)

	;
\draw (0.5, -0.5) node{$\NNN(D_2)$};
\end{tikzpicture}
\vskip1cm
\quad
\begin{tikzpicture}[x=1.0cm, y=1.0cm]

  \vertex (x1) at (0,0) [label=above:$$]{};
  \vertex (x2) at (0,1) [label=above:$$]{};
  \vertex (x3) at (0,2) [label=above:$$]{};
  \vertex (x4) at (1,0.5) [label=above:$$]{};
  \vertex (x5) at (1,1.5) [label=above:$$]{};
   \path
   (x2) edge [->,thick] (x1)
   (x3) edge [->,bend right=50,thick] (x1)
   (x1) edge [->,thick] (x4)
   (x1) edge [->,thick] (x5)

   (x2) edge [->,thick] (x3)
   (x4) edge [->,thick] (x2)
   (x5) edge [->,thick] (x2)

   (x4) edge [->,thick] (x3)
   (x3) edge [->,thick] (x5)

   (x5) edge [->,thick] (x4)
	
	;
\draw (0.5, -0.5) node{$D_3$};
\end{tikzpicture}
\qquad  \qquad \qquad  \qquad
\begin{tikzpicture}[x=1.0cm, y=1.0cm]

  \vertex (x1) at (0,0) [label=above:$$]{};
  \vertex (x2) at (0,1) [label=above:$$]{};
  \vertex (x3) at (0,2) [label=above:$$]{};
  \vertex (x4) at (1,0.5) [label=above:$$]{};
  \vertex (x5) at (1,1.5) [label=above:$$]{};
   \path
   (x2) edge [-,thick] (x3)
   (x4) edge [-,thick] (x5)
   (x1) edge [-,bend left=50,thick] (x3)
   (x2) edge [-,thick] (x4)
   (x1) edge [-,thick] (x5)

	;
\draw (0.5, -0.5) node{$\NNN(D_3)$};
\end{tikzpicture}
\vskip1cm
\quad
\begin{tikzpicture}[x=1.0cm, y=1.0cm]

  \vertex (x1) at (0,0) [label=above:$$]{};
  \vertex (x2) at (0,1) [label=above:$$]{};
  \vertex (x3) at (0,2) [label=above:$$]{};
  \vertex (x4) at (1,0) [label=above:$$]{};
  \vertex (x5) at (1,1) [label=above:$$]{};
  \vertex (x6) at (1,2) [label=above:$$]{};
   \path
    (x3) edge [->,thick] (x2)
    (x3) edge [->,bend right=50,thick] (x1)
    (x6) edge [->,thick] (x5)
    (x6) edge [->,bend left=50,thick] (x4)
    (x2) edge [->,thick] (x1)
    (x2) edge [->,thick] (x6)
    (x5) edge [->,thick] (x3)
    (x5) edge [->,thick] (x4)
    (x4) edge [->,thick] (x2)
    (x4) edge [->,thick] (x3)
    (x1) edge [->,thick] (x6)
    (x1) edge [->,thick] (x5)
%
%
%
	;
\draw (0.5, -0.5) node{$D_4$};
\end{tikzpicture}
\qquad  \qquad \qquad  \qquad
\begin{tikzpicture}[x=1.0cm, y=1.0cm]

  \vertex (x1) at (0,0) [label=above:$$]{};
  \vertex (x2) at (0,1) [label=above:$$]{};
  \vertex (x3) at (0,2) [label=above:$$]{};
  \vertex (x4) at (1,0) [label=above:$$]{};
  \vertex (x5) at (1,1) [label=above:$$]{};
  \vertex (x6) at (1,2) [label=above:$$]{};
   \path
   (x3) edge [-,thick] (x2)
   (x2) edge [-,thick] (x1)
   (x1) edge [-,thick] (x6)
   (x6) edge [-,thick] (x5)
   (x5) edge [-,thick] (x4)
   (x4) edge [-,thick] (x3)
	;
\draw (0.5, -0.5) node{$\NNN(D_4)$};
\end{tikzpicture}

\end{center}
\caption{The digraphs $D_1$, $D_2$, $D_3$, and $D_4$ which are isomorphic to some orientations of $K_{1,1,3}$, $K_{1,1,1,2}$, $K_{1,1,1,1,1}$, and $K_{2,2,2}$, respectively, and their niche graphs}
\label{fig:cycle}
\end{figure}
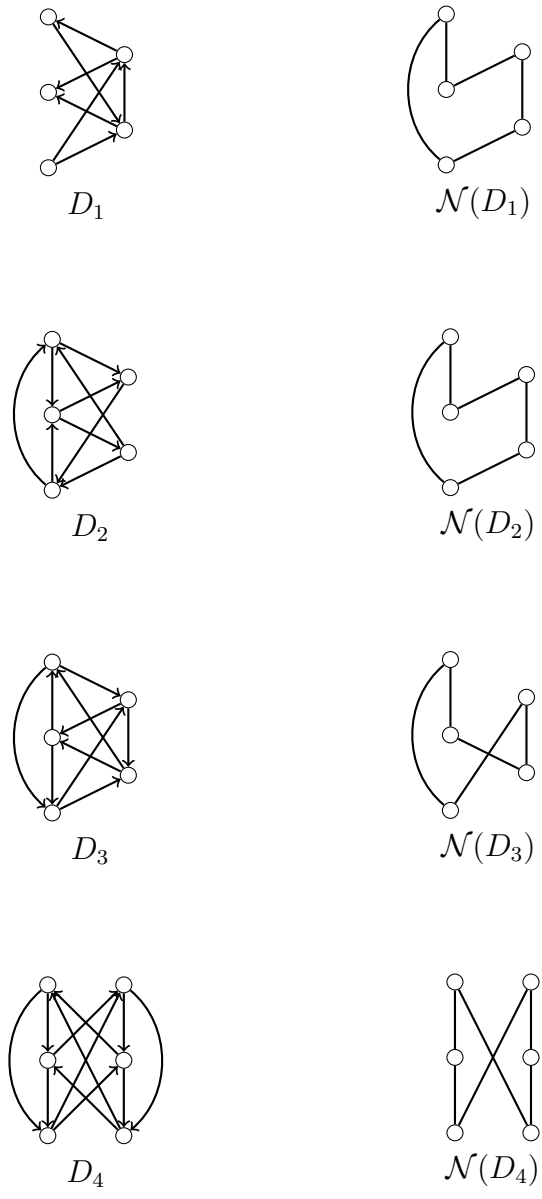

\begin{Lem} \label{thm:tree-character-graph}
Let $G$ be a connected triangle-free graph with $3 \leq |V(G)| \leq 5$, stability number at most $3$, and diameter at most $4$.
Then the following are true:
\begin{enumerate}
\item[(1)] each vertex in $G$ has degree at most $3$;
\item[(2)] $G$ is isomorphic to a path $P_i$ for some $i\in \{3,4,5\}$ or cycle $C_j$ for some $j\in \{4,5\}$ or the graph $G_k$ for some $k \in \{1,2,3,4\}$ given in Figure~\ref{fig:graphs}.
\end{enumerate}
\end{Lem}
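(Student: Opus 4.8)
The plan is to treat the two claims separately, proving the degree bound (1) first and then using it to drive a finite case analysis for the classification (2). For (1), I would argue by contradiction. Suppose some vertex $v$ satisfies $\deg_G(v)\ge 4$. Since $|V(G)|\le 5$, this can happen only if $\deg_G(v)=4$, $|V(G)|=5$, and $N_G(v)=V(G)\setminus\{v\}$. As $G$ is triangle-free, no two vertices of $N_G(v)$ are adjacent, so $N_G(v)$ is a stable set of size $4$ and $\alpha(G)\ge 4$, contradicting $\alpha(G)\le 3$. Hence $\Delta(G)\le 3$, and I may use this bound freely throughout the proof of (2).

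For (2), I would split on $n:=|V(G)|\in\{3,4,5\}$. The cases $n=3$ and $n=4$ are short. For $n=3$ the only connected graph other than $K_3$ is $P_3$, and $K_3$ contains a triangle, so $G\cong P_3$. For $n=4$ a connected triangle-free graph has $3$ or $4$ edges: three edges means a tree, hence $P_4$ or $K_{1,3}$; four edges means a unicyclic graph whose unique cycle, being triangle-free, must be a $C_4$, so the graph is $C_4$ itself (a $3$-cycle would force a triangle). Thus $G\in\{P_4,K_{1,3},C_4\}$, and $K_{1,3}$ is one of the graphs in Figure~\ref{fig:graphs}.

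The case $n=5$ is where the work lies, and I would control it through the number of edges $m$. By Mantel's theorem a triangle-free graph on five vertices satisfies $m\le\lfloor 25/4\rfloor=6$, with equality only for $K_{2,3}$, while connectedness gives $m\ge 4$; I then argue by edge count. If $m=4$ then $G$ is a tree on five vertices, so $G$ is $P_5$, the star $K_{1,4}$, or the ``chair'' (the tree obtained from $P_4$ by attaching a leaf to an internal vertex); the star is ruled out by (1) as it has a vertex of degree $4$ (equivalently $\alpha=4$), leaving $P_5$ and the chair. If $m=5$ then $G$ is unicyclic and triangle-free, so its unique cycle is a $C_5$, giving $G\cong C_5$, or a $C_4$, in which case the fifth vertex must attach as a pendant, giving $C_4$ with a pendant edge. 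If $m=6$ then $G\cong K_{2,3}$ by the equality case of Mantel. A direct check shows each of $P_5$, $C_5$, the chair, the pendant-$C_4$, and $K_{2,3}$ has $\alpha(G)\le 3$; identifying the chair, the pendant-$C_4$, and $K_{2,3}$, together with $K_{1,3}$ from the case $n=4$, with the four graphs $G_1,\dots,G_4$ of Figure~\ref{fig:graphs} finishes the classification.

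The main obstacle is keeping the $n=5$ enumeration simultaneously exhaustive and short; Mantel's edge bound is the key device, since it caps $m$ at $6$ and pins down the densest admissible graph as $K_{2,3}$, after which the tree ($m=4$) and unicyclic ($m=5$) cases are routine. It is worth recording how the hypotheses are used: the stability-number bound $\alpha(G)\le 3$ enters only to discard the star $K_{1,4}$ (via (1)), whereas the diameter hypothesis is in fact automatically satisfied, since any connected graph on at most five vertices has diameter at most $4$, and so plays no active role here.
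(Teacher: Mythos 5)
Your proof is correct, and part (2) follows a genuinely different route from the paper's. For part (1) your argument is essentially the paper's own (four neighbours of a vertex are pairwise nonadjacent by triangle-freeness, giving a stable set of size $4$); your extra observation that the degree must then be exactly $4$ with $n=5$ is harmless but not needed. For part (2), the paper splits on tree versus non-tree: in the tree case it takes a diametral induced path and analyses where a degree-$3$ vertex can sit on it, and in the non-tree case it takes a (chordless, by triangle-freeness) cycle of length $\ge 4$ and classifies how a fifth vertex attaches. You instead stratify by $n$ and then by the edge count $m$, using Mantel's theorem to cap $m$ at $\lfloor n^2/4\rfloor$ and its equality case to pin the unique $6$-edge graph on five vertices as $K_{2,3}$ ($\cong G_4$), with the tree case ($m=n-1$) and unicyclic case ($m=n$) handled by the standard catalogues; your identifications (chair $\cong G_2$, $C_4$ plus pendant $\cong G_3$, $K_{2,3}\cong G_4$, $K_{1,3}\cong G_1$) all check out. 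Your approach buys a mechanically exhaustive enumeration --- every admissible $m$ is visibly covered, so nothing can be missed --- at the cost of importing Mantel with its equality case, whereas the paper's argument is self-contained and uses the diameter hypothesis as scaffolding for the tree case. Your closing remark is also correct and sharper than the paper on one point: since any connected graph on at most five vertices has diameter at most $4$, the diameter hypothesis is vacuous here and the stability bound is used only to exclude the stars, so the lemma's hypotheses are slightly redundant as stated. Two cosmetic points: the bound ``$m\le 4$ when $n=4$'' deserves a one-line justification (Mantel again, or noting that five edges on four vertices force $K_4$ minus an edge, which contains a triangle), and your final paragraph's verification that the listed graphs satisfy $\alpha\le 3$ is unnecessary for the stated implication, which runs only from the hypotheses to the list.
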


\begin{proof}
To show the statement (1) by contradiction,
suppose that there exists a vertex $x$ in $G$ of degree at least $4$.
Then there exist four distinct vertices $x_1$, $x_2$, $x_3$, and $x_4$ which are adjacent to $x$ in $G$.
Since $G$ is triangle-free,
$x_i$ and $x_j$ are not adjacent if $i \neq j$.
Therefore $\{x_1,x_2,x_3,x_4\}$ is a stable set, which contradicts the hypothesis that $G$ has stability number at most $3$.
Thus the statement (1) is true.

To show the statement (2),
we first consider the case where $G$ is a tree.
If $G$ is isomorphic to a path, then $G \cong P_i$ for some $i\in \{3,4,5\}$ by the hypothesis.
Suppose that $G$ is not a path graph.
Let $t$ be a diameter of $G$.
Then $t \leq 4$ by the hypothesis and there exists an induced path $P:=x_1\ldots x_{t+1}$ of length $t$ in $G$.
Since $G$ is not a path graph, there exist a vertex of degree at least $3$ on $P$.
Let $x_i$ be a vertex of degree at least $3$.
Then $x_i$ has degree $3$ by the statement (1).
By the choice of $P$, $i \neq 1 $ and $i \neq t+1$.
If $t=1$, then $G$ is a complete, which is contradiction.
Therefore $t \geq 2$.
If $t=2$,
then $i=2$ and so $G$ is isomorphic to $G_1$ given in Figure~\ref{fig:graphs}.
Suppose $t=3$. Then $i=2 $ or $3$.
By symmetry, we may assume $i=2$.
Then there exists a vertex $x_5$ not on $P$ which is adjacent to $x_2$.
Since $|V(G)| \leq 5$ by the hypothesis,
$V(G)=\{x_1,x_2,x_3,x_4,x_5\}$.
Then, since $G$ is a tree, $x_2$ is the only vertex adjacent to $x_5$ in $G$.
Thus $G$ isomorphic to $G_2$ given in Figure~\ref{fig:graphs}.
If $t=4$, then $G=P$, which is a contradiction.

Now we consider the case where $G$ is not a tree.
Then $G$ has a cycle $C$ of length at least $4$ since $G$ is triangle-free and connected.
Then $4 \leq |V(G)|$.
If $|V(G)|=4$, then $G=C$, so $G$ is isomorphic to a cycle $C_4$ by the hypothesis that $G$ is triangle-free.
Suppose that $|V(G)|=5$.
If $G$ is a cycle, then $G$ is isomorphic to a cycle $C_5$ by the hypothesis.
Now we suppose that $G$ is not a cycle.
If $|V(C)|=5$, then $C$ is a spanning subgraph of $G$ and so $C$ has a chord, which contradicts the hypothesis that $G$ is triangle-free.
Therefore $|V(C)|=4$.
Let $y$ be the vertex in $V(G)\setminus V(C)$.
Then there exists a vertex $y'$ on $C$ which is adjacent to $y$ by the hypothesis that $G$ is connected.
Therefore $y'$ has degree $3$ by the statement (1).
If $y$ has degree $3$, then it is easy to check that $G$ contains a triangle, which is a contradiction.
Therefore $y$ has degree $1$ or $2$.
If $y$ has degree $1$, then $G$ is isomorphic to a graph $G_3$ given in Figure~\ref{fig:graphs}.
If $y$ has degree $2$, then $G$ is isomorphic to a graph $G_4$ given in Figure~\ref{fig:graphs}.
Therefore we have shown that the statement (2) is true.
\end{proof}
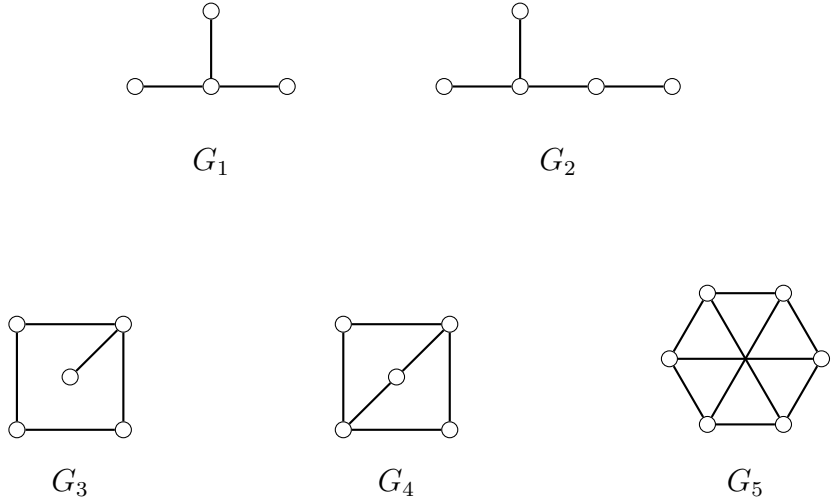
\begin{figure}
\begin{center}
    \begin{tikzpicture}[x=1.0cm, y=1.0cm]

  \vertex (x1) at (0,0) [label=above:$$]{};
  \vertex (x2) at (1,0) [label=above:$$]{};
  \vertex (x3) at (-1,0) [label=above:$$]{};
  \vertex (x4) at (0,1) [label=above:$$]{};
   \path
   (x1) edge [-,thick] (x2)
   (x1) edge [-,thick] (x3)
   (x1) edge [-,thick] (x4)
;
\draw (0, -1) node{$G_1$};
\end{tikzpicture}
\qquad \qquad
   \begin{tikzpicture}[x=1.0cm, y=1.0cm]
  \vertex (x1) at (0,0) [label=above:$$]{};
  \vertex (x2) at (1,0) [label=above:$$]{};
  \vertex (x3) at (-1,0) [label=above:$$]{};
  \vertex (x4) at (0,1) [label=above:$$]{};
  \vertex (x5) at (2,0) [label=above:$$]{};
   \path
   (x1) edge [-,thick] (x2)
   (x1) edge [-,thick] (x3)
   (x1) edge [-,thick] (x4)
(x2) edge [-,thick] (x5)
;
\draw (0.5, -1) node{$G_2$};
\end{tikzpicture}
\vskip1cm
\quad
\begin{tikzpicture}[x=1.0cm, y=1.0cm]
  \vertex (x1) at (0,0) [label=above:$$]{};
  \vertex (x2) at (0.7,0.7) [label=above:$$]{};
  \vertex (x3) at (-0.7,0.7) [label=above:$$]{};
  \vertex (x4) at (-0.7,-0.7) [label=above:$$]{};
  \vertex (x5) at (0.7,-0.7) [label=above:$$]{};
   \path
   (x1) edge [-,thick] (x2)
   (x2) edge [-,thick] (x3)
   (x3) edge [-,thick] (x4)
(x4) edge [-,thick] (x5)
(x5) edge [-,thick] (x2);
\draw (0, -1.4) node{$G_3$};
\end{tikzpicture}
\qquad \qquad \qquad
\begin{tikzpicture}[x=1.0cm, y=1.0cm]
  \vertex (x1) at (0,0) [label=above:$$]{};
  \vertex (x2) at (0.7,0.7) [label=above:$$]{};
  \vertex (x3) at (-0.7,0.7) [label=above:$$]{};
  \vertex (x4) at (-0.7,-0.7) [label=above:$$]{};
  \vertex (x5) at (0.7,-0.7) [label=above:$$]{};
   \path
   (x1) edge [-,thick] (x2)
   (x1) edge [-,thick] (x4)
   (x2) edge [-,thick] (x3)
   (x3) edge [-,thick] (x4)
(x4) edge [-,thick] (x5)
(x5) edge [-,thick] (x2);
\draw (0, -1.4) node{$G_4$};
\end{tikzpicture}
\qquad \qquad \qquad
\begin{tikzpicture}[x=1.0cm, y=1.0cm]
  \vertex (x1) at (1,0) [label=above:$$]{};
  \vertex (x2) at (0.5,0.87) [label=above:$$]{};
  \vertex (x3) at (-0.5,0.87) [label=above:$$]{};
  \vertex (x4) at (-1,0) [label=above:$$]{};
  \vertex (x5) at (-0.5,-0.87) [label=above:$$]{};
   \vertex (x6) at (0.5,-0.87) [label=above:$$]{};
   \path
   (x1) edge [-,thick] (x2)
   (x1) edge [-,thick] (x4)
   (x2) edge [-,thick] (x3)
   (x2) edge [-,thick] (x5)
   (x3) edge [-,thick] (x4)
    (x3) edge [-,thick] (x6)
(x4) edge [-,thick] (x5)
(x5) edge [-,thick] (x6)
(x6) edge [-,thick] (x1);
\draw (0, -1.64) node{$G_5$};
\end{tikzpicture}
    \end{center}
\caption{Connected triangle-free graphs mentioned in Lemmas~\ref{thm:tree-character-graph} and~\ref{prop:six-vertices-char}}
 \label{fig:graphs}
 \end{figure}

\begin{Lem} \label{prop:six-vertices-char}
Let $G$ be a connected triangle-free graph with six vertices. Then $(G,k)$ is niche-realizable for some integer $k \geq 3$ if and only if $k=3$ and $G$ is isomorphic to the cycle $C_6$ or the graph $G_5$ given in Figure~\ref{fig:graphs}.
\end{Lem}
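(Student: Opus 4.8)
The plan is to prove both directions, treating the ``if'' part by explicit construction and the ``only if'' part by first rigidifying the realizing digraph and then running a finite case analysis. For the ``if'' direction, realizability of $(C_6,3)$ is already delivered by the digraph $D_4$ of Lemma~\ref{lem:cycle}, so the only new point is $G_5$. I would first observe that $G_5$ is isomorphic to the complete bipartite graph $K_{3,3}$, and then produce an orientation $D$ of $K_{2,2,2}$ with parts $\{a_1,a_2\}$, $\{b_1,b_2\}$, $\{c_1,c_2\}$ realizing it. For instance, the in/out-regular orientation determined by $N_D^+(a_1)=\{b_1,c_2\}$, $N_D^+(a_2)=\{c_1,c_2\}$, $N_D^+(b_1)=\{a_2,c_1\}$, $N_D^+(b_2)=\{a_1,a_2\}$, $N_D^+(c_1)=\{a_1,b_2\}$, $N_D^+(c_2)=\{b_1,b_2\}$ has niche graph $K_{3,3}$ with sides $\{a_1,b_1,c_1\}$ and $\{a_2,b_2,c_2\}$; I would display this as a figure and verify the nine edges directly.

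For the ``only if'' direction, suppose $(G,k)$ is niche-realizable with $G$ connected, triangle-free and $|V(G)|=6$, and let $D$ realize $G$ with parts $X_1,X_2,X_3,\ldots$. First I would force $k=3$: by Lemma~\ref{prop:triangle-free-character} a connected triangle-free niche graph on six vertices arises only for $k=3$, since $k=4$ requires $n\le 5$ and $k=5$ requires $n=5$. Next I would rigidify $D$. As $\NNN(D)=G$ is $K_3$-free, Lemma~\ref{lem:in-out-degree-condition} gives $d_D^+(x),d_D^-(x)\le 2$ for every $x$; combined with Lemma~\ref{lem:in and out}, which yields $d_D^+(x)+d_D^-(x)=6-|X_i|$ for $x\in X_i$, this forces $|X_i|\ge 2$, hence $|X_1|=|X_2|=|X_3|=2$ and moreover $d_D^+(x)=d_D^-(x)=2$ for all $x$. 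Thus $D$ is an orientation of $K_{2,2,2}$ that is regular of in- and out-degree $2$. Finally, since $G$ is connected with at least three vertices, Lemma~\ref{lem:equivalence contains a triangle} shows $D$ has \emph{no} true twins, as true twins would create a triangle in $G$.

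It then remains to determine all such orientations. The key structural observation is that for each vertex $x$ both $N_D^+(x)$ and $N_D^-(x)$ have size $2$ and each forms an edge of $G$ (witnessed by $x$ as a common in- or out-neighbor), so $E(G)=\{N_D^+(x):x\in V(D)\}\cup\{N_D^-(x):x\in V(D)\}$. For a part $\{u_1,u_2\}$, the sets $N_D^+(u_1)$ and $N_D^+(u_2)$ are distinct $2$-subsets of the four vertices outside the part, so $|N_D^+(u_1)\cap N_D^+(u_2)|\in\{0,1\}$: if the intersection is empty I call the part \emph{opposite-type} (then $u_1u_2\notin E(G)$), and if it is a singleton I call it \emph{adjacent-type} (then $u_1u_2\in E(G)$); equality of the two out-neighborhoods is excluded as it means true twins. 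I would organise the analysis by $m$, the number of adjacent-type parts, equivalently the number of within-part edges of $G$. When $m=0$ one checks that $G$ is $2$-regular, hence, being connected and triangle-free on six vertices, $G\cong C_6$. When $m=3$, triangle-freeness forces $G$ to be $3$-regular, connected and triangle-free on six vertices, hence $G\cong K_{3,3}$, the only such graph (the triangular prism contains a triangle). The mixed cases $m=1$ and $m=2$ would be ruled out by exhibiting a triangle in $G$ in every consistent orientation.

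The main obstacle is precisely this last case analysis. The subtlety is that the adjacent/opposite types of the three parts are not chosen independently: once a vertex sends both its out-arcs into a single part, the arcs forced on that part constrain its type, so the parts are coupled through the requirement that $D$ be a consistent orientation of $K_{2,2,2}$. One must therefore verify, over the finitely many in/out-regular orientations without true twins, that mixed configurations always create a triangle while the admissible pure configurations collapse exactly to $C_6$ and $K_{3,3}$. Using Lemma~\ref{lem:inverse} to identify $D$ with its converse roughly halves the casework, and the description $E(G)=\{N_D^+(x)\}\cup\{N_D^-(x)\}$ reduces each check to intersecting a few $2$-element sets; nonetheless, bookkeeping the cases so that no triangle-free mixed configuration is overlooked is the delicate part of the proof.
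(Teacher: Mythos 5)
Your reductions at the start of the ``only if'' direction are exactly the paper's: $k=3$ via Lemma~\ref{prop:triangle-free-character}, all three partite sets of size $2$ and $d^+_D(v)=d^-_D(v)=2$ via Lemmas~\ref{lem:in and out} and~\ref{lem:in-out-degree-condition}, and twin-freeness via Lemma~\ref{lem:equivalence contains a triangle}. Your ``if'' part is also correct and is a genuine (and pleasant) shortcut: the identification $G_5\cong K_{3,3}$ is right, and your explicit orientation of $K_{2,2,2}$ checks out --- it is a consistent in/out-regular orientation and its niche graph is precisely $K_{3,3}$ with sides $\{a_1,b_1,c_1\}$ and $\{a_2,b_2,c_2\}$ --- whereas the paper obtains its realizing digraphs only as a by-product of the uniqueness analysis in its two cases. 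Your observation that $E(G)$ is exactly the family of sets $N^+_D(x)$ and $N^-_D(x)$, and the resulting adjacent-type/opposite-type dichotomy for parts, are also sound (note $|N^-_D(u_1)\cap N^-_D(u_2)|=|N^+_D(u_1)\cap N^+_D(u_2)|$ by complementation, so the type is well defined).

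The genuine gap is that the decisive step of the ``only if'' direction is never executed. Your plan stratifies by the number $m$ of adjacent-type parts, but the two claims that carry all the weight are left unproven: that in the $m=3$ case triangle-freeness forces $G$ to be $3$-regular (this does not follow immediately from your setup --- each vertex lies in at most four of the sets $N^{\pm}_D(x)$, and you must show exactly one coincidence occurs per vertex), and that the mixed cases $m=1,2$ always produce a triangle, which you explicitly defer as ``the delicate part.'' As it stands the proposal proves the easy reductions and then states the theorem-shaped remainder as an obstacle. The paper's proof shows how to avoid the stratification entirely: it splits only on whether some part contains two vertices adjacent in $G$. In the affirmative case one may assume a common out-neighbor $v_1$ with $N^+_D(v_1)=\{v_3,v_4\}$ equal to an entire part, and then the degree conditions \eqref{eq:prop:six-vertices-char} and twin-freeness force the arcs of $D$ one at a time, terminating either in a triangle (its Subcase~1) or in a unique digraph with niche graph $G_5$ (its Subcase~2); the negative case is forced just as rigidly to the digraph $D_4$ of Figure~\ref{fig:cycle} with niche graph $C_6$. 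This propagation argument proves, rather than assumes, that your mixed type-configurations cannot occur, so if you keep your $m$-based organization you still owe a complete enumeration (even after halving by Lemma~\ref{lem:inverse}) of the twin-free in/out-regular orientations in the cases $m\in\{1,2,3\}$.
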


\begin{proof}
Suppose that $(G,k)$ is niche-realizable for some integer $k \geq 3$.
Then there exists a $k$-partite tournament $D$ such that ${\mathcal N}(D) \cong G$.
Since $|V(G)|=6$, $k=3$ by Lemma~\ref{prop:triangle-free-character}.
We denote the partite sets of $D$ by $(X_1,X_2,X_3)$.
If $|X_l|=1$ for some $l \in \{1,2,3\}$, then $d^+_D(x)+d^-_D(x) = 5$ for the vertex $x$ in $X_l$ by Lemma~\ref{lem:in and out},
which contradicts Lemma~\ref{lem:in-out-degree-condition}.
 Therefore each partite set in $D$ has at least size $2$.
Since $|V(G)|=6$ and $k=3$, each partite set in $D$ has size $2$. Therefore $d^+_D(v)+d^-_D(v)=4$ by Lemma~\ref{lem:in and out} and so, by Lemma~\ref{lem:in-out-degree-condition},
 \begin{equation}
\label{eq:prop:six-vertices-char}
d^+_D(v)=d^-_D(v)=2
\end{equation}
 for all $v \in V(D)$.
 Now let $X_1=\{v_1,v_2\}$, $X_2=\{v_3,v_4\}$, and $X_3=\{v_5,v_6\}$.

 {\it Case 1}. The two vertices in $X_i$ are not adjacent in $G$ for each $i=1$, $2$, and $3$.
 Then the out-neighbors (resp.\ in-neighbors) of each vertex belong to distinct partite sets.
Now, without loss of generality,
we may assume $N^+_D(v_1)=\{v_3,v_5\}$ and $N^-_D(v_1)=\{v_4,v_6\}$.
By symmetry, we may assume that $(v_3,v_5)\in A(D)$.
Then $
N^-_D(v_5)=\{v_1,v_3\}$, so $N^+_D(v_5)=\{v_2,v_4\}$.
By the case assumption, $(v_3,v_6)\notin A(D)$, so $(v_6,v_3)\in A(D)$.
Then $N^-_D(v_3)=\{v_1,v_6\}$, so $N^+_D(v_3)=\{v_2,v_5\}$.
Therefore $
N^-_D(v_2)=\{v_3,v_5\}$ and $ N^+_D(v_2)=\{v_4,v_6\}$.
Thus $N^-_D(v_4)=\{v_2,v_5\}$ and $N^+_D(v_4)=\{v_1,v_6\}$.
Hence
$N^-_D(v_6)=\{v_2,v_4\}$ and $N^+_D(v_6)=\{v_1,v_3\}$.
Now $D$ is uniquely determined and isomorphic to $D_4$ given in Figure~\ref{fig:cycle} whose niche graph is a cycle of length $6$.
{\it Case 2}. The two vertices in $X_j$ are adjacent in $G$ for some $j \in \{1,2,3\}$.
 Without loss of generality, we may assume that $j=2$. By symmetry and Lemma~\ref{lem:inverse}, we may assume $\{v_3,v_4\} \subset N^+_D(v_1)$. Then \begin{equation}
\label{eq:prop:six-vertices-char-1}
N^+_D(v_1)=\{v_3,v_4\}
\end{equation}
 and $N^-_D(v_1)=\{v_5,v_6\}$ by \eqref{eq:prop:six-vertices-char}.
 If $N^+_D(v_2)=\{v_3,v_4\}$, then $v_1$ and $v_2$ are true twins and so, by Lemma~\ref{lem:equivalence contains a triangle}, $G$ contains a triangle, which contradicts the hypothesis that $G$ is triangle-free.
 Therefore $N^+_D(v_2) \neq \{v_3,v_4\}$ and so
 $N^-_D(v_2) \cap \{v_3,v_4\} \neq \emptyset$.
 Then, there are two subcases to consider: $N^-_D(v_2)\cap \{v_3,v_4\}= \{v_3,v_4\}$; $ |N^-_D(v_2)\cap \{v_3,v_4\}| =1$.

 {\it Subcase 1.} $N^-_D(v_2)\cap \{v_3,v_4\}= \{v_3,v_4\}$. Then $N^-_D(v_2)= \{v_3,v_4\}$ and  $N^+_D(v_2)= \{v_5,v_6\}$ by~\eqref{eq:prop:six-vertices-char}, so \begin{equation}
\label{eq:prop:six-vertices-char-2} v_5v_6 \in E(G).
\end{equation}
   Moreover, $|N^-_D(v_3) \cap \{v_5,v_6\}|=|N^-_D(v_4) \cap \{v_5,v_6\}|=1$ by~\eqref{eq:prop:six-vertices-char}.
    If $N^-_D(v_3) \cap N^-_D(v_4) \cap \{v_5,v_6\} \neq \emptyset$, then $v_3$ and $v_4$ are true twins, which is a contradiction.
  Therefore $N^-_D(v_3) \cap N^-_D(v_4) \cap \{v_5,v_6\} =\emptyset$.
  By symmetry, we may assume
that $N^-_D(v_3) \cap \{v_5,v_6\}=\{v_5\}$.
Then $N^-_D(v_4) \cap \{v_5,v_6\}=\{v_6\}$.
Therefore $\{v_1v_5, v_1v_6\} \subset E(G)$ and so, by~\eqref{eq:prop:six-vertices-char-2}, $v_1v_5v_6v_1$ is a triangle in $G$, which contradicts the hypothesis.

 {\it Subcase 2.}  $ |N^-_D(v_2)\cap \{v_3,v_4\}| =1$. By symmetry, we may assume $N^-_D(v_2)\cap \{v_3,v_4\}= \{v_4\}$.
 Then $(v_2,v_3)\in A(D)$.
 Therefore $N^-_D(v_3)=\{v_1,v_2\}$ by~\eqref{eq:prop:six-vertices-char-1} and so, by~\eqref{eq:prop:six-vertices-char}, $N^+_D(v_3)=\{v_5,v_6\}$.
Moreover, $|N^+_D(v_2) \cap \{v_5,v_6\}|=1$.
By symmetry, we may assume $(v_2,v_5) \in A(D)$.
Then $N^+_D(v_2)=\{v_3,v_5\}$. Therefore $N^-_D(v_2)=\{v_4,v_6\}$ and $N^-_D(v_5)=\{v_2,v_3\}$.
Thus $N^+_D(v_5)=\{v_1,v_4\}$.
Hence $N^-_D(v_4)=\{v_1,v_5\}$ and $N^+_D(v_4)=\{v_2,v_6\}$.
Then $N^-_D(v_6)=\{v_3,v_4\}$ and $N^+_D(v_6)=\{v_1,v_2\}$.
Now $D$ is uniquely determined.
It is easy to check that ${\mathcal N}(D)$ is isomorphic to the graph $G_5$ given in Figure~\ref{fig:graphs}.
Therefore the ``only if" part is true.

By the way, $3$-partite tournaments whose niche graphs are isomorphic to the cycle $C_6$ and the graph $G_5$ were constructed in Cases $1$ and $2$, respectively.
Thus  the ``if" part is true and this completes the proof.
\end{proof}

Now we are ready to characterize connected triangle-free niche-realizable graphs.

\begin{Thm} \label{thm:triangle-free}
Let $G$ be a connected triangle-free graph with at least three vertices.
Then $(G,k)$ is niche-realizable for some integer $k \geq 3$ if and only if $k \in \{3,4,5\}$ and $G$ is isomorphic to a graph belonging to the following set:
\[
 \begin{cases}\{ P_3, P_4, P_5, C_5, C_6, G_4, G_5 \} & \mbox{if $k=3$;} \\
 \{ P_4, C_5 \}  & \mbox{if $k=4$;} \\
 \{ C_5 \}& \mbox{if $k=5$} \end{cases}
 \]
 where $G_4$ and $G_5$ are the graphs given in Figure~\ref{fig:graphs}.
 \end{Thm}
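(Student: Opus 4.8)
The plan is to prove both directions by first squeezing $G$ into a short explicit list of candidates, and then deciding candidate-by-candidate which values of $k$ are admissible.

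For the ``only if'' direction, suppose $(G,k)$ is niche-realizable and fix a $k$-partite tournament $D$ with $\NNN(D)\cong G$; set $n=|V(G)|$. First I would apply Lemma~\ref{prop:triangle-free-character} to force $k\in\{3,4,5\}$ and to bound $n$ according to~\eqref{eq:triangle-free-character}; in particular $n\le 6$, and $n=6$ can occur only when $k=3$. By Theorem~\ref{thm:stability number} we have $\alpha(G)\le 3$, and by Corollary~\ref{cor:diameter} we have that $G$ (being connected) has diameter at most $4$. When $n=6$, Lemma~\ref{prop:six-vertices-char} applies directly and yields $G\cong C_6$ or $G\cong G_5$ with $k=3$. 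When $3\le n\le 5$, feeding the two bounds just recorded into Lemma~\ref{thm:tree-character-graph} reduces $G$ to one of $P_3,P_4,P_5,C_4,C_5,G_1,G_2,G_3,G_4$.

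It then remains to pair each surviving candidate with its admissible $k$. The path cases are settled by Lemma~\ref{lem:path}, leaving exactly $(P_3,3),(P_4,3),(P_4,4),(P_5,3)$, and the cycle cases by Lemma~\ref{lem:cycle}, which rules out $C_4$ outright and leaves exactly $(C_5,3),(C_5,4),(C_5,5)$. The genuinely new work is to eliminate $G_1,G_2,G_3$ for every $k$ and to place $G_4\cong K_{2,3}$. At the outset I would record two constraints on $D$: since $G$ is connected and triangle-free, Lemma~\ref{lem:equivalence contains a triangle} forbids true twins in $D$, and Lemma~\ref{lem:in-out-degree-condition} forces $d^+_D(v),d^-_D(v)\le 2$ for every $v$; consequently every edge of $\NNN(D)$ arises as a two-element out-neighborhood or in-neighborhood of some vertex. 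For the tournament cases $k=n$ (namely $(G_1,4)$ and $(G_2,5),(G_3,5),(G_4,5)$) I would invoke the complement characterization of Bowser~{\it et al.}~\cite{bowser1999niche}: one computes $\overline{G_1}=K_3\cup K_1$ and $\overline{G_4}=K_2\cup K_3$, and checks that $\overline{G_2},\overline{G_3}$ likewise contain a triangle alongside further vertices, so none of these complements is a single odd cycle or a disjoint union of paths, whence none can be the complement of the niche graph of a tournament. For the mixed cases (some partite set of size at least $2$) I would argue directly: tracking the forced two-element neighborhoods that would have to produce the prescribed edges of $G_1,G_2,G_3$ at a degree-$3$ vertex, and those of $G_4$ at its two degree-$3$ vertices, and invoking the no-true-twins and degree-$\le 2$ constraints to close each branch, one contradicts either triangle-freeness or the required adjacency pattern. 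This disposes of $(G_1,3),(G_2,3),(G_2,4),(G_3,3),(G_3,4)$ and $(G_4,4)$. Finally, $G_4\cong K_{2,3}$ is placed in the $k=3$ list by exhibiting one explicit orientation of $K_{2,2,1}$ whose niche graph is $K_{2,3}$.

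For the ``if'' direction I would simply collect the realizing digraphs already produced: Lemma~\ref{lem:path} for $P_3,P_4,P_5$, Lemma~\ref{lem:cycle} for $C_5$ (at $k=3,4,5$) and $C_6$, Lemma~\ref{prop:six-vertices-char} for $G_5$, and the explicit construction above for $G_4$, each at the stated $k$. The step I expect to be the main obstacle is the mixed-case bookkeeping for $G_1,G_2,G_3$ and especially for $(G_4,4)$: no earlier lemma applies there off the shelf, so the neighborhood-tracking argument must be run separately for each admissible partite-size pattern (such as $K_{2,1,1}$, $K_{3,1,1}$, $K_{2,2,1}$, and $K_{2,1,1,1}$), and the care lies in verifying that every pattern is forced into a contradiction.
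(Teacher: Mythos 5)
Your proposal is correct in outline, and its reduction phase is exactly the paper's: Lemma~\ref{prop:triangle-free-character} giving $k\in\{3,4,5\}$ and $n\le 6$, Lemma~\ref{prop:six-vertices-char} for $n=6$, Theorem~\ref{thm:stability number} and Corollary~\ref{cor:diameter} feeding Lemma~\ref{thm:tree-character-graph}, and Lemmas~\ref{lem:path} and~\ref{lem:cycle} for the path and cycle candidates. Where you genuinely diverge is the endgame on $G_1,G_2,G_3,G_4$. You split into tournament cases ($k=n$), dispatched via the complement characterization of Bowser~{\it et al.}~\cite{bowser1999niche}, and mixed cases, dispatched by per-pattern neighborhood bookkeeping over $K_{2,1,1}$, $K_{3,1,1}$, $K_{2,2,1}$, and $K_{2,1,1,1}$; your tournament-case computations check out ($\overline{G_1}=K_3\cup K_1$, $\overline{G_4}=K_2\cup K_3$, and $\overline{G_2},\overline{G_3}$ each contain a triangle, so none is an odd cycle or a disjoint union of paths), and your enumeration of partite patterns is exhaustive. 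The paper instead runs a single unified forcing argument with no such case split and no appeal to \cite{bowser1999niche}: it fixes a degree-$3$ vertex $v_1$, uses Lemmas~\ref{lem:in-out-degree-condition} and~\ref{lem:inverse} to assume $N^+_D(v_1)=\{v_3,v_4\}$, so that $v_3v_4$ is an edge of $G$, which kills $G_1$ immediately (for $n=4$ this edge creates a triangle at $v_1$) and forces $n=5$; it then rules out the possibility that $v_1$'s partite set is a singleton, since that would put two independent edges $v_3v_4$ and $v_2v_5$ into $G-v_1$, impossible in each of $G_2$, $G_3$, $G_4$, and this one step covers all of your $k=n$ cases; finally, the pairwise distinctness of the two-element sets $N^-_D(v_3)$, $N^-_D(v_4)$, $N^+_D(v_5)$ forces $v_3$ and $v_4$ into a common partite set, hence $k=3$, and determines $D$ uniquely with $\NNN(D)\cong G_4$, so the realizer for $(G_4,3)$ falls out of the same analysis rather than requiring your separate $K_{2,2,1}$ construction. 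In short, your route makes the $k=n$ cases computation-free at the price of several parallel mixed-case analyses that you only sketch (and which, as you anticipate, are the laborious part), while the paper's degree-$3$-vertex argument replaces all of that bookkeeping at once and yields the $G_4$ construction as a byproduct; both are valid proofs of the theorem once your deferred case checks are written out.
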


 \begin{proof}
 Let $n$ denote the number of vertices in $G$.
 To show the ``only if" part, suppose that $(G,k)$ is niche-realizable for some integer $k \geq 3$.
Then there exists a $k$-partite tournament $D$ such that ${\mathcal N}(D) \cong G$.
 By Lemma~\ref{prop:triangle-free-character},
 $k \leq 5$ and $n \leq 6$.
 If $n = 6$, then $k=3$ and $G$ is isomorphic to a cycle $C_6$ or the graph $G_5$ given in Figure~\ref{fig:graphs} by Lemma~\ref{prop:six-vertices-char}.
 Now we suppose that $n\leq 5$.
 If $G$ is a path or a cycle, then, by Lemmas~\ref{lem:path} and \ref{lem:cycle},
  $G$ is isomorphic to $P_3$, $P_4$, $P_5$, or $C_5$ when $k=3$; $G$ is isomorphic to $P_4$ or $C_5$ when $k=4$;  $G$ is isomorphic to $C_5$ when $k=5$.

  Now we suppose that $G$ is neither a path nor a cycle.
 By Theorem~\ref{thm:stability number} and Corollary~\ref{cor:diameter}, $G$ has stability number at most $3$ and diameter at most $4$.
 Therefore, by Lemma~\ref{thm:tree-character-graph}, $G$ is isomorphic to the graph $G_j$ given in Figure~\ref{fig:graphs} for some $j \in \{1,2,3,4\}$.
 Thus it remains to show that $k=3$ and $G \cong G_4$.
Since $G$ is neither a path nor a cycle, there exists a vertex $v_1$ of degree at least $3$ in $G$.
If $v_1$ has degree at least $4$, then $G \not \cong G_i$ for each $1\leq i \leq 4$.
Therefore $v_1$ has degree $3$.
 Since each of $v_1$ and its neighbors has indegree at most $2$ and outdegree at most $2$ by Lemma~\ref
{lem:in-out-degree-condition},
$v_1$ is adjacent to at most two vertices if $d^+_D(v_1) = 0$ or $d^-_D(v_1)= 0$, which is a contradiction.
Therefore $d^+_D(v_1)\geq 1$ and $d^-_D(v_1) \geq 1$.
If $d^+_D(v_1) =1$ and $d^-_D(v_1) = 1$, then $v_1$ has degree at most $2$ for the same reason as the previous one, which is a contradiction.
Therefore $d^+_D(v_1)  \geq 2 $ or $d^-_D(v_1)  \geq 2 $ and so $3 \leq d^+_D(v_1)+d^-_D(v_1)$.
By Lemma~\ref{lem:inverse}, we may assume $d^+_D(v_1) \geq 2$ and then, by Lemma~\ref{lem:in-out-degree-condition},  $d^+_D(v_1) =2$.
Now we let \begin{equation}
\label{eq:thm:triangle-free1}
N^+_D(v_1)=\{v_3,v_4\} \end{equation}
and $v_5$ be an in-neighbor of $v_1$ in $D$.
 Suppose $n \leq 4$.
 Then $n=4$ since degree of $v_1$ is $3$.
 Therefore $G$ is isomorphic to the graph $G_1$. However, two neighbors $v_3$ and $v_4$ of $v_1$ are adjacent in $G$ by~\eqref{eq:thm:triangle-free1}, which is a contradiction.
Thus $n=5$ and so
\[G \cong \text{$G_2$, $G_3$, or $G_4$.}\]
Let $v_2$ to be a vertex of $G$ other than $v_1$, $v_3$, $v_4$, and $v_5$.
Let $X_i$ be the partite sets of $D$ for each $ 1 \leq i \leq k$.
We may assume that $v_1 \in X_1$.
Since $k \geq 3$ and $n=5$, $|X_1|=1$, $2$, or $3$.
Since $d^-_D(v_1) \geq 1 $ and $d^+_D(v_1) \geq 2$, $|X_1|=1$ or $2$.
Suppose, to the contrary, that $|X_1|=1$.
Then $X_1=\{v_1\}$, so $N^-_D(v_1)=\{v_2,v_5\}$ and then $v_2v_5 \in E(G)$.
By~\eqref{eq:thm:triangle-free1}, $v_3v_4 \in E(G)$, so $G-v_1$ has at least two edges $v_3v_4$ and $v_2v_5$ not sharing end points in $G$, which cannot happen in any of $G_2$, $G_3$ and $G_4$.
Thus $|X_1|=2$ and \[X_1=\{v_1,v_2\}.\]
Then, since $v_1$ has three neighbors which form a stable set, each of $v_3$, $v_4$, and $v_5$ should be a common out-neighbor or in-neighbor of $v_1$ and a vertex adjacent to $v_1$.
By the way, $v_3$ and $v_4$ are common out-neighbors and $v_5$ is a common in-neighbor by~\eqref{eq:thm:triangle-free1}.
Therefore $N^-_D(v_3)$, $N^-_D(v_4)$, and $N^+_D(v_5)$ are $2$-element sets which differ from each other.
Furthermore, since $v_3v_4 \in E(G)$,
one of $v_3$ and $v_4$ is not adjacent to $v_1$.
Without loss of generality, we may assume that $v_3$ is the vertex not adjacent to $v_1$.

Suppose, to the contrary, that $v_3$ and $v_4$ are in different partite sets.
Since $v_1$ and $v_3$ are not adjacent in $G$, $(v_4,v_3) \in A(D)$ by \eqref{eq:thm:triangle-free1}. Then $N^-_D(v_3)=\{v_1,v_4\}$ by Lemma~\ref{lem:in-out-degree-condition}.
Since $v_5$ is adjacent to $v_1$, $v_1$ and $v_5$ have common in-neighbor or out-neighbor.
Since $N^-_D(v_1)=\{v_5\}$, $v_1$ and $v_5$ cannot have any common in-neighbor and so have a common out-neighbor.
Since $N^+_D(v_1)=\{v_3,v_4\}$, $v_3$ and $v_4$ are possible common out-neighbors of $v_1$ and $v_5$.
However, $v_3$ already has two in-neighbors distinct from $v_5$. Therefore $v_4$ must be a common out-neighbor of $v_1$ and $v_5$.
Thus
 $(v_5,v_4) \in A(D)$ and so, by Lemma~\ref{lem:in-out-degree-condition}, $N^+_D(v_5)=\{v_1,v_4\}$.
Therefore $N^-_D(v_3)=N^+_D(v_5)$, which is a contradiction.
Thus $v_3$ and $v_4$ belong to the same partite set and $k=3$.
Let $X_2=\{v_3,v_4\}$ and $X_3=\{v_5\}$.
Then, since $v_1$ and $v_3$ are not adjacent in $G$, $(v_3,v_5) \in A(D)$.
Since $d^-_D(v_3)=2$, $(v_2,v_3)\in A(D)$ and so $N^-_D(v_3)=\{v_1,v_2\}$.
Since $N^-_D(v_3)\neq N^-_D(v_4)$ and $d^-_D(v_4)=2$,
$(v_5,v_4) \in A(D)$.
Therefore $N^-_D(v_4)=\{v_1,v_5\}$ and so $N^+_D(v_4)=\{v_2\}$.
Moreover, since $N^+_D(v_5)=\{v_1,v_4\}$, $(v_2,v_5) \in A(D)$.
Now $D$ is uniquely determined. Then, it is easy to check that ${\mathcal N}(D) \cong G_4$. Therefore the ``only if" part is true.

The pairs $(P_3,3)$, $(P_4,3)$, $(P_5,3)$ and $(P_4,4)$ are niche-realizable by Lemma~\ref{lem:path}.
The pairs $(C_5,3)$, $(C_5,4)$, $(C_5,5)$, and $(C_6,3)$ are
niche-realizable by Lemma~\ref{lem:cycle}.
The pair $(G_5,3)$ is niche-realizable by Lemma~\ref{prop:six-vertices-char}.
The pair $(G_4,3)$ is niche-realizable as we have constructed a $3$-partite tournament $D$ whose niche graph is isomorphic to $G_4$ while showing the ``only if" part of the statement.
Hence the ``if" part is true.
 \end{proof}


\end{document}